\newtheorem{thm}{Theorem}[section]
\newtheorem{cor}[thm]{Corollary}
\newtheorem{lem}[thm]{Lemma}
\newtheorem{prop}[thm]{Proposition}
\theoremstyle{definition}
\newtheorem{defn}[thm]{Definition}
\theoremstyle{remark}
\newtheorem{rem}[thm]{Remark}
\numberwithin{equation}{section}
\begin{document}

\title[Poisson kernels and $L^p$ polyharmonic boundary value problems]{Higher order Poisson Kernels and $L^p$ polyharmonic boundary value problems in Lipschitz domains}%
\author{Zhihua Du}%
\address{Department of Mathematics, Jinan University, Guangzhou 510632, China}%
\email{tzhdu@jnu.edu.cn}%

\subjclass{31B10, 31B30, 35J40}%
\keywords{polyharmonic equations, boundary value problems, higher
order Poisson and
conjugate Poisson kernels, integral representation}%

\begin{abstract}
In this article, we introduce higher order conjugate Poisson and
Poisson kernels, which are higher order analogues of the
classical conjugate Poisson and Poisson kernels, as well as the polyharmonic fundamental solutions, and define
multi-layer potentials in terms of Poisson field and the polyharmonic fundamental solutions, in which the former formed by the
higher order conjugate Poisson and Poisson kernels. Then by the
multi-layer potentials, we solve three classes of boundary value
problems (i.e., Dirichlet, Neumann and regularity problems) with $L^{p}$
boundary data for polyharmonic equations in Lipschitz domains and
give integral representation (or potential) solutions of these problems.
\end{abstract}
\maketitle
\section{Introduction}

Let $D$ be a Lipschitz graph domain or bounded Lipschitz domain in $\mathbb{R}^{n+1}$, $n\geq 2$. In
this work, we will resolve the following boundary value problems (simply, BVPs) for
polyharmonic functions in $D$ with $L^{p}$ boundary data:

\begin{description}
  \item[Dirichlet problem]  \begin{equation}\begin{cases}\Delta^{m}u=0,\,\,in\,\, D,\\
  \Delta^{j}u=f_{j},\,\,on\,\,\partial D,\\
  \big(u-M_{1}\widetilde{f}_{0}\big)\in L^{p}(D)
  \end{cases}
  \end{equation}
  with $\|u-M_{1}\widetilde{f}_{0}\|_{L^{p}(\partial D)}\leq C\sum_{j=1}^{m-1}\|f_{j}\|_{L^{p}(\partial D, wd\sigma)}$, where $\Delta$ is the Laplacian, $f_{0}\in L^{p}(\partial D)$, $f_{j}\in L^{p}(\partial D, wd\sigma)$, $1\leq j\leq m-1$ for some $p\in (1,\infty)$ and some certain weight functions $w$ on $\partial D$ (if $D$ is bounded, $w\equiv 1$ ), $d\sigma$ is the area measure of $\partial D$, $\widetilde{f}_{0}$ is related to all the boundary data $f_{j}$, $0\leq j\leq m-1$, $M_{1}$ is the classical double layer potential operator, and the constant $C$ depends only on $m,n,p$ and $D$.
  \item[Neumann problem] \begin{equation}\begin{cases}\Delta^{m}u=0,\,\,in\,\, D,\\
  \frac{\partial}{\partial N}\Delta^{j}u=g_{j},\,\,on\,\,\partial
  D,\\
  \nabla\big(u-\mathcal{M}_{1}\widetilde{g}_{0}\big)\in L^{p}(D)
  \end{cases}
  \end{equation}
  with $\|\nabla\big(u-\mathcal{M}_{1}\widetilde{g}_{0}\big)\|_{L^{p}(\partial D)}\leq C\sum_{j=1}^{m-1}\|g_{j}\|_{L^{p}(\partial D, wd\sigma)}$, where $\Delta$ is the Laplacian, $\nabla$ is the gradient operator, $\frac{\partial}{\partial N}$ denotes the
outward normal derivative, $g_{0}\in L^{p}(\partial D)$, $g_{j}\in L^{p}(\partial D, wd\sigma)$, $1\leq j\leq m-1$ for some $p\in (1,\infty)$ and some certain weight functions $w$ on $\partial D$ (if $D$ is bounded, $w\equiv 1$, and $g_{m-1}$ has mean value zero, i.e. $\int_{\partial D}g_{m-1}d\sigma=0$), $d\sigma$ is the area measure of $\partial D$, $\widetilde{g}_{0}$ is related to all the boundary data $g_{j}$, $0\leq j\leq m-1$, $\mathcal{M}_{1}$ is the classical single layer potential operator, and the constant $C$ depends only on $m,n,p$ and $D$.
  \item[Regularity problem] \begin{equation}\begin{cases}\Delta^{m}u=0,\,\,in\,\, D,\\
  \Delta^{j}u=h_{j},\,\,on\,\,\partial D,\\
  \nabla\big(u-\mathcal{M}_{1}\widetilde{h}_{0}\big)\in L^{p}(D)
  \end{cases}
  \end{equation}
  with $\| \nabla\big(u-\mathcal{M}_{1}\widetilde{h}_{0}\big)\|_{L^{p}(D)}\leq C\sum_{j=1}^{m-1}\|h_{j}\|_{L^{p}_{1}(\partial D, wd\sigma)}$, where $\Delta$ is the Laplacian, $\nabla$ is the gradient operator, $h_{0}\in L_{1}^{p}(\partial D)$, $h_{j}\in L_{1}^{p}(\partial D, wd\sigma)$, $0\leq j\leq m-1$ for some $p\in (1,\infty)$ and some certain weight functions $w$ on $\partial D$ (if $D$ is bounded, $w\equiv 1$), $d\sigma$ is the area measure of $\partial D$, $\widetilde{h}_{0}$ is related to all the boundary data $h_{j}$, $0\leq j\leq m-1$, $\mathcal{M}_{1}$ is the classical single layer potential operator, and the constant $C$ depends only on $m,n,p$ and $D$.
   \end{description}

 Moreover, as the classical results for the Laplace's equation, in the case of bounded Lipschitz domains, we also have the following estimates of the solutions:
 \begin{itemize}
   \item $\|M(u)\|_{L^{p}(\partial D)}\leq C\sum_{j=0}^{m-1}\|f_{j}\|_{L^{p}(\partial D)}$ for the polyharmonic Dirichlet problem (simply, PHD problem);
   \item $\|M(\nabla u)\|_{L^{p}(\partial D)}\leq C\sum_{j=0}^{m-1}\|g_{j}\|_{L^{p}(\partial D)}$ and $\|u\|_{L^{p}(D)}\leq C\sum_{j=0}^{m-1}\|g_{j}\|_{L^{p}(\partial D)}$ for the polyharmonic Neumann problem (simply, PHN problem);
   \item $\|M(\nabla u)\|_{L^{p}(\partial D)}\leq C\sum_{j=0}^{m-1}\|h_{j}\|_{L_{1}^{p}(\partial D)}$ and $\|u\|_{L^{p}(D)}\leq C\sum_{j=0}^{m-1}\|h_{j}\|_{L_{1}^{p}(\partial D)}$for the polyharmonic regularity problem (simply, PHR problem),
 \end{itemize}
where $M(u)$ and
$M(\nabla u)$ are respectively the non-tangential maximal
functions of $u$ and $\nabla u$, which is
defined by
\begin{equation}
M(F)(Q)=\sup_{X\in \Gamma_{\gamma}(Q)}|F(X)|,\,\,for \,\,Q\in
\partial D,
\end{equation}
where $\Gamma_{\gamma}(Q)$ is the non-tangential approach region,
viz.,
\begin{equation}
\Gamma_{\gamma}(Q)=\{X\in D: |X-Q|<\gamma\, \mathrm{dist}(X,\partial
D)\}
\end{equation}
in which $\gamma>1$. It is worthy to note that the non-tangential
maximal functions $M(F)$, and the non-tangential limits
$\displaystyle\lim_{\substack{X\rightarrow P\\ X\in
\Gamma_{\gamma}(P), P\in\partial D}}\!\!\!\!\!F(X)$ throughout this article,
are defined for all $\gamma>0$, so we always elide the subscript
$\gamma$ in proper places and denote $\Gamma_{\gamma}(\cdot)$ only
by $\Gamma(\cdot)$. It is also clear that all the boundary data in BVPs (1.1)-(1.3) are non-tangential.
Throughout this paper, all the spaces $L_{1}^{p}(\partial D, wd\sigma)$ have the same sense as the case of Laplace equation (for the details, see \cite{dk,dk1,v0}).

Since the late of 1970s, there was a great deal of activity on the
study of boundary value problems for partial
differential equations in Lipschitz domains. The first breakthrough
was due to Dahlberg. In 1977, through a careful analysis of the
Poisson kernel of a Lipschitz domain $D$ with which given, his
showed that there exists an $\varepsilon>0$ depending only on the
geometry of $D$ such that the Dirichlet problem is solvable for the
data in $L^{p}(\partial D, d\sigma)$, $2-\varepsilon<p<\infty$ (see
\cite{dah1,dah2,dah3}). In 1978, Fabes, Jodeit and Riviere used
Calder\'on theorem on the boundedness of the Cauchy integrals on
Lipschitz curves for a special case \cite{cal}, to extend the
classical method of layer potentials to $C^{1}$ domains. Thus they
resolved the Dirichlet and Neumann problems for Laplace's equation,
with $L^{p}(\partial D, d\sigma)$ and optimal estimates, for $C^{1}$
domains \cite{fjv}. In 1979, by using an identity due to Rellich,
Jerison and Kenig gave a simple proof of Dahlberg's results and
resolved the Neumann problem on Lipschitz domains, with
$L^{2}(\partial D, d\sigma)$ and optimal estimates
\cite{jk1,jk2,jk3}. In 1981, Coifman, McIntosh and Meyer established
their deep theorem on the boundedness of the Cauchy integral on any
Lipschitz curve for general case \cite{cmm}. Using
Coifman-McIntosh-Meyer theorem and Rellich type formula, in 1982,
Verchota extended the $C^{1}$ results of Fabes, Jodeit and Riviere
to the Dirichlet problem in $L^{2}(\partial D, d\sigma)$ for
Laplace's equation in Lipschitz domains in terms of the method of
layer potentials \cite{v0}. It was due to Dahlberg and Kenig to
resolve the Neumann problem in $L^{p}(\partial D, d\sigma)$ for
Laplace's equation in Lipschitz domains in 1987 \cite{dk1}.
Thereafter, the technique of layer potentials became an
overwhelming method in the study of BVPs in $C^{1}$ and Lipschitz
domains of Euclidean spaces or Riemann manifolds, with various
boundary data, including the H\"older continuous, $L^{p}$, Hardy,
Besov, Sobolev types etc.. The BVP types included Dirichlet, Neumann,
Robin and mixed problems for elliptic equations and elliptic systems
\cite{d,dk,dk1,dkpv,dkv,lcb,ls,mr,mm,mt,mt1,mt2,ob,pv,pv1,pv2,pv3,v,v0,v1,v2,s,imm}.
Although there were some works for higher order equations
(principally, polyharmonic \cite{dkpv,imm,pv3,v2}), however, the most
were second order elliptic boundary value problems \cite{mr,k} and
biharmonic boundary value problems \cite{dkv,mm,pv,pv1,pv2,v1,v}.

In this paper, we introduce higher order conjugate Poisson and
Poisson kernels, which are higher order analogues of the
classical conjugate Poisson and Poisson kernels, as well as the polyharmonic fundamental solutions, and define
multi-layer potentials in terms of Poisson field and the polyharmonic fundamental solutions, in which the former formed by the
higher order conjugate Poisson and Poisson kernels. Then by the
multi-layer potentials, we solve three classes of boundary value
problems (i.e., Dirichlet, Neumann and regularity problems) with $L^{p}$
boundary data for polyharmonic equations in Lipschitz domains and
give integral representation (or potential) solutions of these problems. That is, combining with the known second order
results of Dahlberg, Kenig and Verchota etc., we
resolve the higher order elliptic boundary value problems
(1.1)-(1.3) in Lipschitz domains.

\section{Higher order conjugate Poisson and
Poisson kernels}

It is well-known that the conjugate Poisson and Poisson kernels in
$\mathbb{R}^{n+1}$ can be unifiedly denoted as the following form up to a different constant
(see \cite{sw})
\begin{equation}
\mathcal{P}_{j}(x)=C_{n}\frac{x_{j}}{|x|^{n+1}},
\end{equation}
where $x=(x_{1}, x_{2},\ldots, x_{n+1})\in \mathbb{R}^{n+1}$, $1\leq
j\leq n+1$ and
\begin{equation}C_{n}=\frac{1}{\omega_{n}}=\frac{\Gamma(\frac{n+1}{2})}{2\pi^{\frac{n+1}{2}}},\end{equation}
in which $\omega_{n}$ is the surface area of the unit sphere $S^{n}$
in $\mathbb{R}^{n+1}$.

In what follows, we will introduce higher order conjugate Poisson and
Poisson kernels in terms of $\mathcal{P}_{j}$.

\begin{lem}
Let $x=(x_{1}, x_{2},\ldots, x_{n+1})\in\mathbb{R}^{n+1}$, then for
any $s\in \mathbb{R}$ and $1\leq j\leq n+1$,
\begin{equation}
\Delta \left(x_{j}|x|^{s}\right)=s(s+n+1)x_{j}|x|^{s-2}
\end{equation}
and
\begin{equation}
\Delta
\left(x_{j}|x|^{s}\log|x|\right)=s(s+n+1)x_{j}|x|^{s-2}\log|x|+(2s+n+1)x_{j}|x|^{s-2},
\end{equation}
where $\Delta=\sum_{k=1}^{n+1}\frac{\partial^{2}}{\partial
x_{k}^{2}}$ and $|x|=\sqrt{x_{1}^{2}+\cdots+x_{n+1}^{2}}$.
\end{lem}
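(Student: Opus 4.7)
The strategy is a direct computation using the product rule for the Laplacian,
\[
\Delta(fg) = (\Delta f)\,g + 2\,\nabla f\cdot\nabla g + f\,(\Delta g),
\]
together with two elementary ingredients in $\mathbb{R}^{n+1}$: the radial formula
\[
\Delta\bigl(|x|^{s}\bigr) = s(s+n-1)|x|^{s-2},
\]
obtained by differentiating $|x|^{s}$ twice with respect to each $x_{k}$ and summing the $n+1$ terms; and
\[
\Delta\bigl(\log|x|\bigr) = \frac{n-1}{|x|^{2}},
\]
obtained the same way from $\partial_{k}\log|x| = x_{k}/|x|^{2}$. These are the only two facts that have to be checked honestly, and both are one-line computations with careful attention to the ambient dimension $n+1$ (as opposed to $n$).

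For (2.3), I would take $f = x_{j}$ and $g = |x|^{s}$. Then $\Delta f = 0$, $\nabla f = e_{j}$, $\nabla g = s|x|^{s-2}x$, so $\nabla f\cdot\nabla g = s x_{j}|x|^{s-2}$, and $\Delta g$ is given above. The product rule collapses to
\[
\Delta(x_{j}|x|^{s}) = 2s\,x_{j}|x|^{s-2} + x_{j}\cdot s(s+n-1)|x|^{s-2} = s(s+n+1)\,x_{j}|x|^{s-2},
\]
which is exactly (2.3).

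For (2.4), I would bootstrap on (2.3) by taking $f = x_{j}|x|^{s}$ and $g = \log|x|$. The first piece $(\Delta f)g$ is $s(s+n+1)x_{j}|x|^{s-2}\log|x|$ by what was just proved. For the cross term, a short calculation gives $\partial_{k}(x_{j}|x|^{s}) = \delta_{jk}|x|^{s} + s x_{j}x_{k}|x|^{s-2}$ and $\partial_{k}\log|x| = x_{k}/|x|^{2}$, so
\[
\nabla f\cdot\nabla g = x_{j}|x|^{s-2} + s x_{j}|x|^{s-2} = (s+1)\,x_{j}|x|^{s-2}.
\]
Finally $f\,\Delta g = x_{j}|x|^{s}\cdot (n-1)/|x|^{2} = (n-1)x_{j}|x|^{s-2}$. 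Adding the three contributions with the factor of $2$ on the cross term gives
\[
\bigl[\,2(s+1) + (n-1)\,\bigr]\,x_{j}|x|^{s-2} = (2s+n+1)\,x_{j}|x|^{s-2}
\]
for the non-logarithmic part, which combined with the $\log|x|$ piece is exactly (2.4).

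There is no real obstacle here; the only mild pitfall is a dimension-counting error in the two radial Laplacian formulas, since the paper works in $\mathbb{R}^{n+1}$ rather than $\mathbb{R}^{n}$ and it would be easy to produce $s(s+n-2)$ or $n/|x|^{2}$ by reflex. I would therefore present both radial identities as a brief preliminary computation before invoking the product rule, so that the reader can verify the dimension count in isolation.
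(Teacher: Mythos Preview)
Your computation is correct in every detail, including the dimension bookkeeping in $\mathbb{R}^{n+1}$. The paper itself does not supply a proof of this lemma; it simply refers to \cite{dqw2} with the line ``It is the same as in \cite{dqw2}.'' Your direct argument via the product rule $\Delta(fg)=(\Delta f)g+2\nabla f\cdot\nabla g+f\Delta g$, first with $f=x_{j}$, $g=|x|^{s}$ and then bootstrapping with $f=x_{j}|x|^{s}$, $g=\log|x|$, is exactly the kind of elementary verification one would expect, and is almost certainly what the cited reference contains as well.
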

\begin{proof} It is the same as in \cite{dqw2}.\end{proof}

Denote that
\begin{equation}\alpha_{s}=s(s+n+1)\end{equation}
for any $s\in \mathbb{R}$. Thus, when $s\neq0$, we can rewrite (2.3)
and (2.4) as follows:
\begin{equation}
\Delta \left(\frac{1}{\alpha_{s}}x_{j}|x|^{s}\right)=x_{j}|x|^{s-2}
\end{equation}
and
\begin{equation}
\Delta
\left(\frac{1}{\alpha_{s}}x_{j}|x|^{s}\log|x|\right)=x_{j}|x|^{s-2}\log|x|+\left(\frac{1}{s}+\frac{1}{s+n+1}\right)x_{j}|x|^{s-2}.
\end{equation}
As a convention, we take that $\alpha_{0}=1$. Moreover, we also have
\begin{equation}
\Delta \left(\frac{1}{n+1}x_{j}\log|x|\right)=x_{j}|x|^{-2}.
\end{equation}

\begin{lem}
Suppose that $x=(x_{1}, x_{2},\ldots, x_{n+1}), v=(v_{1},
v_{2},\ldots, v_{n+1})\in \mathbb{R}^{n+1}$. Let
\begin{equation}
D_{1}^{(j)}(x,v)=-\mathcal{P}_{j}(x-v).
\end{equation}
For $m\in \mathbb{N}$ and $m\geq2$, define
\begin{align}
D_{m}^{(j)}(x,v)=\frac{c_{n}}{\beta_{1}\beta_{2}\cdots\beta_{m-1}}(x_{j}-v_{j})|x-v|^{2m-(n+3)}
\end{align}
if  $n$ is even, and
\begin{align}
D_{m}^{(j)}(x,v)=\begin{cases}\frac{c_{n}}{\beta_{1}\beta_{2}\cdots\beta_{m-1}}(x_{j}-v_{j})|x-v|^{2m-(n+3)},\,\,\,\,m\leq \frac{n+1}{2},\vspace{2mm}\\
\frac{c_{n}}{(n+1)\beta_{1}\beta_{2}\cdots\beta_{\frac{n+1}{2}-1}\alpha_{2}\alpha_{4}\cdots\alpha_{2m-n-3}}(x_{j}-v_{j})|x-v|^{2m-(n+3)}\vspace{1mm}\\
\times\left[\log|x-v|-\sum_{t=1}^{m-\frac{n+3}{2}}\left(\frac{1}{2t}+\frac{1}{2t+n+1}\right)\right],\,\,\,m\geq\frac{n+3}{2}
\end{cases}
\end{align}
if $n$ is odd, where
\begin{equation}\beta_{k}=\alpha_{2k-n-1},\,\,\,k=1,2,\ldots,m-1,\end{equation}
$\alpha_{s}$ is given by (2.5) and $c_{n}=-C_{n}$, $C_{n}$ is given by (2.2). Then
\begin{equation}
\Delta D_{1}^{(j)}(x,v)=0\,\,\mbox{and}\,\,\Delta
D_{m}^{(j)}(x,v)=D_{m-1}^{(j)}(x,v), \,\,m\geq2.
\end{equation}
\end{lem}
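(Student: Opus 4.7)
The plan is to verify both identities by direct computation using Lemma 2.1 as the sole tool, organized by cases on the parity of $n$ and the range of $m$. Setting $y=x-v$ throughout, the identity $\Delta D_1^{(j)}(x,v)=0$ is immediate from (2.3) with $s=-(n+1)$, since then $\alpha_s=s(s+n+1)=0$.

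For $m\geq 2$, first consider the cases where no logarithm appears in $D_m^{(j)}$, namely $n$ even, or $n$ odd with $m\leq\frac{n+1}{2}$. Take $s=2m-(n+3)$; one checks $\alpha_s=(2m-n-3)(2m-2)=\beta_{m-1}$ directly from (2.12). Applying (2.6) to $y_j|y|^s$ produces $y_j|y|^{s-2}=y_j|y|^{2(m-1)-(n+3)}$ together with the factor $\alpha_s=\beta_{m-1}$, which cancels the trailing $\beta_{m-1}$ in the constant in front of $D_m^{(j)}$. What remains is exactly $D_{m-1}^{(j)}$, using the convention that an empty product of $\beta$'s is $1$ so as to recover $D_1^{(j)}$ when $m=2$.

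The odd transitional case $m=\frac{n+3}{2}$ must be handled separately, because there $s=0$ and $\beta_{m-1}=0$, so the polynomial formula breaks down. The logarithmic formula collapses to a constant times $(x_j-v_j)\log|x-v|$ (the bracketed sum is empty), and applying (2.8) gives $y_j|y|^{-2}$; the prefactor $n+1$ produced by (2.8) cancels the $n+1$ in the denominator, leaving exactly $D_{\frac{n+1}{2}}^{(j)}$.

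The real work, and the main obstacle, lies in the range $n$ odd, $m\geq\frac{n+5}{2}$. Writing $s=2m-(n+3)=2k$ with $k\geq 1$ and $A_k=\sum_{t=1}^{k}\bigl(\frac{1}{2t}+\frac{1}{2t+n+1}\bigr)$, identity (2.7) gives
\begin{equation*}
\Delta\bigl(y_j|y|^{2k}[\log|y|-A_k]\bigr)
=\alpha_{2k}\,y_j|y|^{2k-2}[\log|y|-A_k]+(4k+n+1)\,y_j|y|^{2k-2}.
\end{equation*}
The elementary identity $\alpha_{2k}\bigl(\frac{1}{2k}+\frac{1}{2k+n+1}\bigr)=4k+n+1$ allows the second term to be absorbed into the bracket, replacing $A_k$ by $A_{k-1}$. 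The factor $\alpha_{2k}$ that remains then cancels the trailing $\alpha_{2k}$ in the denominator in front of $D_m^{(j)}$, yielding $D_{m-1}^{(j)}$. This telescoping is precisely what dictated the particular partial-sum correction $A_k$ built into the definition, and the most delicate bookkeeping is verifying that every constant lines up across the $m=\frac{n+5}{2}$ boundary, where $A_{k-1}=A_0=0$ and the product $\alpha_2\alpha_4\cdots\alpha_{2k-2}$ is empty.
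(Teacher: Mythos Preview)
Your proof is correct and follows exactly the approach indicated in the paper, which merely states that the result follows ``by direct calculations'' from (2.6)--(2.8). You have supplied those calculations in full, with the case distinctions and the telescoping bookkeeping carried out carefully; nothing is missing.
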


\begin{proof}
By direct calculations, it immediately follows from (2.6)-(2.8).
\end{proof}

In the following, we need to introduce ultraspherical polynomials
\cite{aar,sz}, $P_{l}^{(\lambda)}$ and $Q_{l}^{(\lambda)}$, which
can be respectively defined by the generating functions

\begin{equation}
(1-2r\xi+r^{2})^{-\lambda}=\sum_{l=0}^{\infty}P_{l}^{(\lambda)}(\xi)r^{l}
\end{equation}
and
\begin{equation}
(1-2r\xi+r^{2})^{-\lambda}\log(1-2r\xi+r^{2})=\sum_{l=0}^{\infty}Q_{l}^{(\lambda)}(\xi)r^{l},
\end{equation}
where $\lambda\neq0$, $0\leq|r|<1$ and $|\xi|\leq1$.
$P_{l}^{(\lambda)}$ and $Q_{l}^{(\lambda)}$ have the following
explicit expressions:
\begin{align}
P_l^{(\lambda)}(\xi)=&\frac{1}{l!}\left\{\frac{d^{l}}{dr^{l}}\left[(1-2r\xi+r^{2})^{-\lambda}\right]\right\}_{r=0}\\
=&\sum_{j=0}^{[\frac{l}{2}]}(-1)^j\frac{\Gamma(l-j+\lambda)}{\Gamma(\lambda)j!(l-2j)!}(2\xi)^{l-2j}\nonumber
\end{align}
and
\begin{align}
Q_l^{(\lambda)}(\xi)=&-\frac{d}{d \lambda}\left[P_l^{(\lambda)}(\xi)\right]\\
=&\sum_{j=0}^{[\frac{l}{2}]}\sum_{k=0}^{l-j-1}(-1)^{j+1}\frac{\Gamma(l-j+\lambda)}{(\lambda+k)\Gamma(\lambda)j!(l-2j)!}(2\xi)^{l-2j},\nonumber
\end{align}
where $[\frac{l}{2}]$ denotes the integer part of $\frac{l}{2}$. If
necessary, for some special values of $\lambda$, say
$\lambda=\lambda_{0}$, the above expressions may be extended and
interpreted as limits for $\lambda\rightarrow\lambda_{0}$ (for
example, $\lambda$ is a non-positive integer). Some other properties
of the ultraspherical polynomials can be also found in
\cite{aar,sz}.

For sufficiently large $|v|\geq |x|$ and any real numbers
$\lambda\neq0$,
\begin{align}
|x-v|^{-2\lambda}&=(|v|^{2}-2x\cdot
v+|x|^{2})^{-\lambda}\\
&=|v|^{-2\lambda}\left[1-2\frac{|x|}{|v|}\left(\frac{x}{|x|}\cdot
\frac{v}{|v|}\right)+\frac{|x|^{2}}{|v|^{2}}\right]^{-\lambda}\nonumber\\
&=|v|^{-2\lambda}\sum_{l=0}^{\infty}P_{l}^{(\lambda)}(x_{S^{n}}\cdot
v_{S^{n}})\left(\frac{|x|}{|v|}\right)^{l}\nonumber\\
&=\sum_{l=0}^{\infty}|x|^{l}P_{l}^{(\lambda)}(x_{S^{n}}\cdot
v_{S^{n}})|v|^{-(l+2\lambda)},\nonumber
\end{align}
where $x_{S^{n}}=\frac{x}{|x|}$ and $v_{S^{n}}=\frac{v}{|v|}$. Obviously, $x_{S^{n}}, v_{S^{n}}\in S^{n}$.

Similarly, we have
\begin{align}
&|x-v|^{-2\lambda}\log|x-v|\\
=&|x-v|^{-2\lambda}\left[\frac{1}{2}\log\frac{|x-v|^{2}}{|v|^{2}}+\log|v|\right]\nonumber\\
=&(|v|^{2}-2x\cdot v+|x|^{2})^{-\lambda}\left[\frac{1}{2}\log\frac{|v|^{2}-2x\cdot
v+|x|^{2}}{|v|^{2}}+\log|v|\right]\nonumber\\
=&|v|^{-2\lambda}\left[1-2\frac{|x|}{|v|}\left(\frac{x}{|x|}\cdot
\frac{v}{|v|}\right)+\frac{|x|^{2}}{|v|^{2}}\right]^{-\lambda}\Big\{\frac{1}{2}\log\left[1-2\frac{|x|}{|v|}\left(\frac{x}{|x|}\cdot
\frac{v}{|v|}\right)+\frac{|x|^{2}}{|v|^{2}}\right]\nonumber\\
&+\log|v|\Big\}\nonumber\\
=&\frac{1}{2}|v|^{-2\lambda}\sum_{l=0}^{\infty}Q_{l}^{(\lambda)}(x_{S^{n}}\cdot
v_{S^{n}})\left(\frac{|x|}{|v|}\right)^{l}+|v|^{-2\lambda}\log|v|\sum_{l=0}^{\infty}P_{l}^{(\lambda)}(x_{S^{n}}\cdot
v_{S^{n}})\left(\frac{|x|}{|v|}\right)^{l}\nonumber\\
=&\frac{1}{2}\sum_{l=0}^{\infty}|x|^{l}Q_{l}^{(\lambda)}(x_{S^{n}}\cdot
v_{S^{n}})|v|^{-(l+2\lambda)}+\sum_{l=0}^{\infty}|x|^{l}\log|v|P_{l}^{(\lambda)}(x_{S^{n}}\cdot
v_{S^{n}})|v|^{-(l+2\lambda)}.\nonumber
\end{align}

\begin{defn}
Let $f$ be a continuous function defined in $\mathbb{R}^{n+1}$ that
can be expanded as
\begin{equation}
f(\zeta)=\sum_{k=-\infty}^{m}c_{k}(\zeta)|\zeta|^{k}
\end{equation}
for sufficiently large $|\zeta|$, where the integer $m\geq -(n+1)$ and the
coefficient functions $c_{k}(\zeta)$ are continuous in
$\mathbb{R}^{n+1}$. Denote
\begin{equation}
\mathrm{S. P.}
[f](\zeta)=\sum_{k=0}^{m}c_{k}(\zeta)|\zeta|^{k}+\sum_{k=1}^{n+1}
c_{-k}(\zeta)\frac{1}{|\zeta|^{k}}
\end{equation}
and
\begin{equation}
\mathrm{I. P.}
[f](\zeta)=\sum_{k=n+2}^{\infty}c_{-k}(\zeta)\frac{1}{|\zeta|^{k}}
\end{equation}
for sufficiently large $|\zeta|$. If $\mathrm{I. P.} [f]$ is $L^{p}$ integrable in the complement of a sufficiently large ball centered at the origin in $\mathbb{R}^{n+1}$ for $p\geq1$, then $\mathrm{S. P.} [f]$ is
called the singular part of $f$ and $\mathrm{I. P.} [f]$ is called
the integrable part of $f$ at infinity in the $L^{p}$ sense, $p\geq1$.
\end{defn}

We immediately have

\begin{prop}
Let $f$ be defined as in Definition 2.3, then for sufficiently large
$|\zeta|$,
\begin{equation}
f(\zeta)=\mathrm{S. P.} [f](\zeta)+\mathrm{I. P.} [f](\zeta).
\end{equation}
\end{prop}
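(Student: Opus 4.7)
The plan is to observe that Proposition 2.4 is essentially a re-indexing identity: once one writes down the three series defining $f$, $\mathrm{S.P.}[f]$, and $\mathrm{I.P.}[f]$, the claim reduces to matching indices. Concretely, I would start from the expansion postulated in Definition 2.3,
\begin{equation*}
f(\zeta)=\sum_{k=-\infty}^{m}c_{k}(\zeta)|\zeta|^{k},
\end{equation*}
and split the summation at $k=0$ into a nonnegative-index part and a negative-index part:
\begin{equation*}
f(\zeta)=\sum_{k=0}^{m}c_{k}(\zeta)|\zeta|^{k}+\sum_{k=-\infty}^{-1}c_{k}(\zeta)|\zeta|^{k}.
\end{equation*}

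Next, I would substitute $k\mapsto -k$ in the second sum to obtain $\sum_{k=1}^{\infty}c_{-k}(\zeta)|\zeta|^{-k}$, and then split that tail at the threshold $k=n+1$ into $\sum_{k=1}^{n+1}c_{-k}(\zeta)|\zeta|^{-k}$ and $\sum_{k=n+2}^{\infty}c_{-k}(\zeta)|\zeta|^{-k}$. Inspection shows these two pieces are exactly $\mathrm{S.P.}[f](\zeta)-\sum_{k=0}^{m}c_{k}(\zeta)|\zeta|^{k}$ and $\mathrm{I.P.}[f](\zeta)$ respectively, so reassembling the four pieces yields the claimed identity $f=\mathrm{S.P.}[f]+\mathrm{I.P.}[f]$.

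The only analytic point worth mentioning is that the rearrangement of terms is legitimate. This follows because the expansion in Definition 2.3 is pointwise valid for sufficiently large $|\zeta|$, and the hypothesis that $\mathrm{I.P.}[f]$ is $L^{p}$ integrable on the complement of a large ball forces the tail $\sum_{k\geq n+2}c_{-k}(\zeta)|\zeta|^{-k}$ to converge absolutely on that exterior region; the remaining pieces consist of only finitely many terms, so absolute convergence of the whole series is automatic there. Because the statement is purely a bookkeeping decomposition, I do not anticipate any substantive obstacle; the one thing requiring care is ensuring that the cutoff $k=n+1$ in $\mathrm{S.P.}[f]$ and the starting index $k=n+2$ in $\mathrm{I.P.}[f]$ dovetail so that no term is counted twice or dropped.
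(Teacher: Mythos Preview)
Your proposal is correct and matches the paper's treatment: the paper gives no proof at all, introducing the proposition with ``We immediately have,'' so your explicit index-matching is exactly the intended (trivial) verification. One small remark: your closing paragraph about absolute convergence is unnecessary, since the decomposition only peels off finitely many terms (indices $-(n+1)\le k\le m$) from the top of a one-sided series and leaves the remaining tail as $\mathrm{I.P.}[f]$; splitting a convergent series into a finite partial sum plus its tail requires no rearrangement hypothesis, and in any case $L^p$ integrability of the tail on an exterior region does not by itself imply pointwise absolute convergence of the series.
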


\begin{defn}
Let
\begin{equation}
K_{m}^{(j)}(x,v)=
D_{m}^{(j)}(x,v)-\mathrm{S. P.} [D_{m}^{(j)}](x,v)\,\,\,\,for\,\, x\neq v,
\end{equation}
where
\begin{align}
\mathrm{S. P.}
[D_{m}^{(j)}](x,v)=&\frac{c_{n}}{\beta_{1}\beta_{2}\cdots\beta_{m-1}}(x_{j}-v_{j})\Big[\sum_{l=0}^{2m-1}P_{l}^{(\frac{n+3}{2}-m)}(x_{S^{n}}\cdot
v_{S^{n}})\\
&\times\min\left(\left|\frac{x}{v}\right|^{l}, \left|\frac{x}{v}\right|^{-l}\right)\times\max\left(|x|^{2m-n-3}, |v|^{2m-n-3}\right)\Big]\nonumber
\end{align}
for any $m$ and even $n$, or any odd $n$ with $m\leq\frac{n+1}{2};$
and
\begin{align}
\mathrm{S. P.} [D_{m}^{(j)}](x,v)=&\frac{c_{n}}{(n+1)\beta_{1}\beta_{2}\cdots\beta_{\frac{n+1}{2}-1}\alpha_{2}\alpha_{4}\cdots\alpha_{2m-1}}(x_{j}-v_{j})\\
&\times\Big\{\frac{1}{2}\Big[\sum_{l=0}^{2m-1}Q_{l}^{(\frac{n+3}{2}-m)}(x_{S^{n}}\cdot
v_{S^{n}})\nonumber\\
&\times\min\left(\left|\frac{x}{v}\right|^{l}, \left|\frac{x}{v}\right|^{-l}\right)\times\max\left(|x|^{2m-n-3}, |v|^{2m-n-3}\right)\Big]\nonumber\\
&+\left[\log(\max(|x|, |v|))-\sum_{t=1}^{m-\frac{n+3}{2}}\left(\frac{1}{2t}+\frac{1}{2t+n+1}\right)\right]\nonumber\\
&\times\Big[\sum_{l=0}^{2m-1}P_{l}^{(\frac{n+3}{2}-m)}(x_{S^{n}}\cdot
v_{S^{n}})\times\min\left(\left|\frac{x}{v}\right|^{l}, \left|\frac{x}{v}\right|^{-l}\right)\nonumber\\
&\times\max\left(|x|^{2m-n-3}, |v|^{2m-n-3}\right)\Big]\Big\}\nonumber
\end{align}
for any odd $n$ with $m\geq\frac{n+3}{2}$, in which
$\alpha_{s},\,\beta_{s}$ and $c_{n}$ are given as in Lemma 2.2, and the ultraspherical polynomials
$P_{l}^{(\frac{n+3}{2}-m)},\,Q_{l}^{(\frac{n+3}{2}-m)}$ are defined by
(2.16) and (2.17). Then $K_{m}^{(j)}(x,v)$, $1\leq j\leq n+1$, are
said to be the $m$th order conjugate Poisson and Poisson kernels.
\end{defn}

By the above definition, we immediately obtain that
\begin{prop}
\begin{equation}K_{m}^{(j)}(x,v)=-K_{m}^{(j)}(v,x)\end{equation}
with $x\neq v$ for any $m\in \mathbb{N}$ and $1\leq j\leq n+1$.
\end{prop}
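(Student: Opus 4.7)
The plan is to verify the antisymmetry factor by factor, exploiting the fact that $(x_j-v_j)$ is the only $j$-dependent piece in both $D_m^{(j)}$ and its singular part, and it flips sign under the swap $x\leftrightarrow v$ while every other factor is manifestly symmetric.

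First I would handle $D_m^{(j)}(x,v)$ itself. In the even-$n$ case (2.10) and in both branches of the odd-$n$ case (2.11), $D_m^{(j)}(x,v)$ is a constant times $(x_j-v_j)$ multiplied by a function of $|x-v|$ alone (a power, possibly times $\log|x-v|$, possibly shifted by a constant). Since $|x-v|=|v-x|$ and $(x_j-v_j)=-(v_j-x_j)$, this already yields $D_m^{(j)}(x,v)=-D_m^{(j)}(v,x)$. In particular, on the diagonal $|x|=|v|$, where $K_m^{(j)}=D_m^{(j)}$ by (2.24), the claim is immediate.

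Next I would verify the same antisymmetry for $\mathrm{S.P.}[D_m^{(j)}](x,v)$ in the off-diagonal region $|x|\neq|v|$. Inspecting (2.25) and (2.26), every piece apart from the prefactor $(x_j-v_j)$ depends on $x,v$ only through the combinations
\[
x_{S^n}\cdot v_{S^n},\qquad \min\!\bigl(|x/v|^{l},|x/v|^{-l}\bigr),\qquad \max\!\bigl(|x|^{2m-n-3},|v|^{2m-n-3}\bigr),\qquad \log(\max(|x|,|v|)),
\]
together with the ultraspherical polynomials $P_l^{(\lambda)}$ and $Q_l^{(\lambda)}$ evaluated at $x_{S^n}\cdot v_{S^n}$. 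Each of these is invariant under interchanging $x$ and $v$: the dot product is obviously symmetric, and swapping $x$ with $v$ sends $\min(|x/v|^{l},|x/v|^{-l})$ to $\min(|v/x|^{l},|v/x|^{-l})$, which is the same set of two numbers. The max and log-max factors are symmetric by inspection, and the numerical constants $c_n,\beta_k,\alpha_s$ do not involve $x,v$ at all. Consequently $\mathrm{S.P.}[D_m^{(j)}](x,v)=-\mathrm{S.P.}[D_m^{(j)}](v,x)$.

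Combining the two displays gives
\[
K_m^{(j)}(x,v)=D_m^{(j)}(x,v)-\mathrm{S.P.}[D_m^{(j)}](x,v)=-D_m^{(j)}(v,x)+\mathrm{S.P.}[D_m^{(j)}](v,x)=-K_m^{(j)}(v,x),
\]
for $|x|\neq|v|$, and the diagonal case was already handled. I do not anticipate any real obstacle here; the only place that requires a moment of care is making sure the $\min$/$\max$ pairing in (2.25)--(2.26) really is symmetric, which it is because the minimum and maximum are taken over a two-element set unchanged by the swap.
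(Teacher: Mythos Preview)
Your argument is correct and is exactly what the paper has in mind: the proposition is stated as following ``immediately'' from Definition 2.5, and your factor-by-factor check that $(x_j-v_j)$ is the unique antisymmetric piece while every other ingredient in (2.10), (2.11), (2.25), (2.26) is symmetric under $x\leftrightarrow v$ is precisely that verification spelled out.
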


\begin{rem}
Let $x=(x_{1}, x_{2}, \ldots, x_{n}, y)\in \mathbb{R}^{n+1}_{+}$ and
$v=(\underline{v}, 0)$ with $\underline{v}=(v_{1}, v_{2}, \ldots,
v_{n})$, then $2K_{m}^{(n+1)}(x,v)$ are just the higher order
Poisson kernels with a different singular part, $G_{m}(x,\underline{v})$, introduced in
\cite{dqw2}. Using those kernels, we have resolved the following
polyharmonic Dirichlet problems with $L^p$ data in the upper-half
space, $\mathbb{R}_{+}^{n+1}$
\begin{equation}
\begin{cases}
\Delta^{m}u=0 \,\,\mathrm{in} \,\,\mathbb{R}_{+}^{n+1} \vspace{1mm}\\
\Delta^{j}u=f_{j} \,\,\mathrm{on}
\,\,\partial\mathbb{R}_{+}^{n+1}=\mathbb{R}^{n},
\end{cases}
\end{equation}
where $n\geq 2$,
$\mathbb{R}_{+}^{n+1}=\mathbb{R}^{n}\times
\mathbb{R}_{+}=\{x=(\underline{x},y): \underline{x}\in
\mathbb{R}^{n},\,y\in \mathbb{R},\, y>0\}$,
$\underline{x}=(x_{1},\ldots,x_{n})$,
$\Delta\equiv\Delta_{n+1}:=\sum_{k=1}^{n}\frac{\partial^{2}}{\partial
x_{k}^{2}}+\frac{\partial^{2}}{\partial y^{2}}$, $f_{j}\in
L^{p}(\mathbb{R}^{n})$, $m\in \mathbb{N}$, $0\leq j< m$, and $p\geq
1$.
\end{rem}

\section{Multi-layer $\mathcal{D}$-potentials}
With the aforementioned preliminaries,  in the present section, we
introduce one class of multi-layer potentials in terms of the higher order
conjugate Poisson and Poisson kernels, which are higher order analogues of the classical double layer potential.

Let $X=(x_{1}, x_{2}, \ldots, x_{n+1})$, $Y=(y_{1}, y_{2}, \ldots,
 y_{n+1})\in \mathbb{R}^{n+1}$ and $X\neq Y$, for any natural number $m\geq
1$, define
\begin{equation}
K_{m}(X, Y)=(K_{m}^{(1)}(X,Y), K_{m}^{(2)}(X,Y), \ldots,
K_{m}^{(n+1)}(X,Y)),
\end{equation}
where $K_{m}^{(j)}$, $1\leq j\leq n+1$, are the $m$th order conjugate Poisson
and Poisson kernels. $K_{m}$ is called the $m$th order
Poisson field.

\begin{defn}
Let $D$ be a simply connected (bounded or unbounded) domain in
$\mathbb{R}^{n+1}$ with the boundary $\partial D$ and $k\in
\mathbb{N}\cup\{\infty\}$, $C^{k}(D)$ denotes the set of the
functions that have continuous partial derivatives of  order $k$ in
$D$. If $f$ is a continuous function defined on $D\times\partial D$
satisfying $f(\cdot,v)\in C^{k}(D)$ for any fixed $v\in
\partial D$ and $f(x,\cdot)\in C(\partial D)$ for any fixed $x\in
D$, then $f$ is said to be $C^{k}\times C$ on $D\times\partial D$
and written as $f\in (C^{k}\times C)(D\times\partial D)$. When $f$
is vector-valued, $f\in (C^{k}\times C)(D\times\partial D)$ means
that all of its components are in $(C^{k}\times C)(D\times\partial
D)$.
\end{defn}

\begin{defn}Let $D$ be a Lipschitz domain in $\mathbb{R}^{n+1}$, with the
boundary $\partial D$. Set
\begin{equation}
M_{j}f(X)=\int_{\partial D}\langle K_{j}(X,Q), n_{Q}\rangle
f(Q)d\sigma(Q), \,\,X\in D,
\end{equation}
where $1\leq j< \infty$, $K_{j}$ is the $j$th order Poisson field,
$n_{Q}$ is the outward unit normal at $Q\in \partial D$, $\langle
\cdot, \cdot\rangle$ is the inner product in
$\ell^{2}(\mathbb{R}^{n+1})$, $d\sigma$ is the surface measure on
$\partial D$, and $f\in L^{p}(\partial D)$ for some suitable $p$.
$M_{j}f$ is called the $j$th-layer $\mathcal{D}$-potential of $f$.
\end{defn}

\begin{rem}
By the above definition, $M_{1}f$ is the classical double layer
potential.
\end{rem}
Define
\begin{equation}
Tf(P)=\lim_{\epsilon\rightarrow0}\int_{\partial D\setminus
B_{\epsilon}(P) }\langle K_{1}(P,Q), n_{Q}\rangle f(Q)d\sigma(Q),
\,\,P\in \partial D,
\end{equation}
where $B_{\epsilon}(P)=\{Q\in \mathbb{R}^{n+1}: |Q-P|<\epsilon\}$. Hence the adjoint operator of $T$ is given by
\begin{equation}
T^{*}f(P)=\lim_{\epsilon\rightarrow0}\int_{\partial D\setminus
B_{\epsilon}(P) }\langle K_{1}(Q,P), n_{P}\rangle f(Q)d\sigma(Q),
\,\,P\in \partial D.
\end{equation}

Due to Dahlberg, Kenig and Verchota et al., we have

\begin{lem}[\cite{dk1,v0}]
There exists $\varepsilon=\varepsilon(D)>0$ such that
$\pm\frac{1}{2}I-T$ is invertible in $L^{p}(\partial D)$,
$2-\varepsilon<p<\infty$, and $\pm\frac{1}{2}I-T^{*}$ is invertible
in $L^{p}(\partial D)$, $1<p<2+\varepsilon$.
\end{lem}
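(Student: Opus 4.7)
The plan is to follow the classical Verchota / Dahlberg--Kenig program, treating the statement as a reduction to the known invertibility theory for the harmonic double-layer operator on a Lipschitz domain. Since $\langle K_{1}(X,Y), n_{Q}\rangle$ is, up to a multiplicative constant, precisely the classical double-layer kernel of Laplace's equation on $\partial D$, the $L^{p}$-boundedness of $T$ and $T^{*}$ for every $1 < p < \infty$ follows at once from the Coifman--McIntosh--Meyer theorem on Cauchy integrals along Lipschitz graphs together with standard Calder\'on--Zygmund extrapolation.

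For the case $p = 2$, I would invoke a Rellich identity. Let $u$ be the single-layer potential of $f \in L^{2}(\partial D)$. The classical jump relations give $\partial_{N}u|_{\pm} = (\mp\tfrac{1}{2}I + T^{*})f$, while the tangential derivatives of $u$ agree from both sides of $\partial D$. Applying Rellich to $u$ separately in $D$ and in $\mathbb{R}^{n+1}\setminus\overline{D}$ yields the comparability $\|\partial_{N}u\|_{L^{2}(\partial D)} \approx \|\nabla_{T}u\|_{L^{2}(\partial D)}$, and combining this with the jump formulas one obtains
\begin{equation*}
\bigl\|(\tfrac{1}{2}I \pm T^{*})f\bigr\|_{L^{2}(\partial D)} \approx \bigl\|(\tfrac{1}{2}I \mp T^{*})f\bigr\|_{L^{2}(\partial D)}.
\end{equation*}
Together with the uniqueness of the interior/exterior Neumann problem, this delivers injectivity and closed range for $\pm\tfrac{1}{2}I - T^{*}$ on $L^{2}(\partial D)$; a Fredholm argument then promotes these to invertibility, and duality transfers invertibility to $\pm\tfrac{1}{2}I - T$ on $L^{2}(\partial D)$.

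To propagate invertibility from $p = 2$ to an open neighborhood $(2-\delta, 2+\delta)$, I would apply the standard stability principle that bounded invertibility on one exponent of a complex interpolation scale persists for nearby exponents. This handles the symmetric strip around $2$ but does not reach the whole ranges claimed.

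The main obstacle is the asymmetric extension all the way to $p = \infty$ for $\pm\tfrac{1}{2}I - T$ and down to $p = 1$ for $\pm\tfrac{1}{2}I - T^{*}$. To close this I would appeal to Dahlberg's deep theorem that harmonic measure on $\partial D$ is $A_{\infty}$ with respect to surface measure, or equivalently to atomic Hardy-space estimates for the adjoint double-layer operator, which give bounded invertibility of $\pm\tfrac{1}{2}I - T^{*}$ on $H^{1}_{\mathrm{at}}(\partial D)$. Interpolating against the $L^{2}$ result yields invertibility of $\pm\tfrac{1}{2}I - T^{*}$ on $L^{p}(\partial D)$ for every $1 < p \leq 2$, and duality then gives invertibility of $\pm\tfrac{1}{2}I - T$ on $L^{p}(\partial D)$ for every $2 \leq p < \infty$; the perturbative step around $p = 2$ fills in the small open neighborhood needed to overlap both halves. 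The genuine difficulty lies in this endpoint analysis: the Rellich/Fredholm step at $p = 2$ is by now routine, but the Hardy-space endpoint requires the harmonic-measure machinery that is specific to elliptic PDE on Lipschitz domains, and is precisely the content of \cite{dk1,v0}.
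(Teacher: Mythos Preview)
The paper does not supply its own proof of this lemma: it is stated with a citation to Verchota \cite{v0} and Dahlberg--Kenig \cite{dk1} and used as a black box. Your sketch is a faithful outline of exactly the argument contained in those references---Rellich identities for $p=2$ (Verchota), Hardy-space/atomic endpoint estimates plus interpolation to reach the full range (Dahlberg--Kenig), and duality to pass between $T$ and $T^{*}$---so there is nothing to compare against within the paper itself.

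One remark on phrasing: your line ``a Fredholm argument then promotes these to invertibility'' is a little loose. On a genuinely Lipschitz (non-$C^{1}$) boundary the operator $T$ is \emph{not} compact, so classical Fredholm index theory is unavailable; this was precisely the obstacle Verchota overcame. His route is to use the Rellich comparability you wrote down to show simultaneously that $\tfrac{1}{2}I+T^{*}$ and $-\tfrac{1}{2}I+T^{*}$ are bounded below (hence injective with closed range), and then to obtain surjectivity by a continuity/approximation argument through smooth subdomains, not by an index computation. This is a wording issue rather than a mathematical gap, but worth tightening if you ever write the argument out in full.
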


By the properties of higher order conjugate Poisson and
Poisson kernels, we have
\begin{thm}
Let $\{K_{m}\}_{m=1}^{\infty}$ be the sequence of the Poisson
fields, and $D$ be a Lipschitz graph domain in $\mathbb{R}^{n+1}$,
i.e.,
\begin{equation}
D=\{(\underline{x},x_{n+1})\in \mathbb{R}^{n+1}:
x_{n+1}>\varphi(\underline{x}), \,\underline{x}=(x_{1}, x_{2},
\ldots, x_{n})\in \mathbb{R}^{n}\},
\end{equation}
where $\varphi: \mathbb{R}^{n}\rightarrow \mathbb{R}$ is Lipschitz
continuous; namely,
$|\varphi(\underline{x})-\varphi(\underline{x}^{\prime})|\leq L
|\underline{x}-\underline{x}^{\prime}|$ with $0<L<\infty$, and set $\varphi(0)>0$, then
\begin{enumerate}
  \item For all $m\in \mathbb{N}$, $K_{m}\in (C^{\infty}\times
        C)(D\times \partial D)$, the non-tangential
        boundary value \begin{equation*}
        \lim_{\substack {X\rightarrow P \\X\in \Gamma(P),\, Q\in
        \partial D}}K_{m}(X,Q)=K_{m}(P,Q)
        \end{equation*}
        exists for all $P\in \partial D$ and $ P\ne Q\in
        \partial D$; $K_{m}(\cdot, P)$ can be continuously extended to
        $\overline{D}\setminus\{P\}$ for any fixed $P\in
        \partial D$;
  \item For  $m\geq 2$,
        \begin{equation*}
        |K_{m}(X,Q)|\leq M\frac{|X-Q|}{\left(1+|Q|^{2}\right)^{\frac{n+2+\epsilon}{2}}}
        \end{equation*}
        for any $(X,Q)\in D_{c}\times\{Q\in\partial D: |Q|>T\}$, where $0<\epsilon<1$,
        $D_{c}$ is any compact subset of $\overline{D}$,
        $T$ is a sufficiently large positive real number and $M$ denotes
        some positive constant depending only on $\epsilon, D_{c}$ and $T$;
  \item $\Delta_{X} K_{1}(X, Y)=-\Delta_{Y} K_{1}(X, Y)=0$ and $\Delta_{X} K_{m}(X, Y)=-\Delta_{Y} K_{m}(X, Y)=K_{m-1}(X, Y)$ for any $m>1$, $X, Y\in \mathbb{R}^{n+1}\setminus\{0\}$ and $X\neq Y$, where $\Delta_{X}=\sum_{j=1}^{n+1}\frac{\partial}{\partial x^{j}}$ and $\Delta_{Y}=\sum_{j=1}^{n+1}\frac{\partial}{\partial y^{j}}$;
  \item The non-tangential limit \begin{equation}\lim_{\substack{X\rightarrow P\\ X\in \Gamma(P)}}\int_{\partial D}\langle K_{1}(X, Q), n_{Q}\rangle f(Q)d\sigma(Q)
        =\frac{1}{2}f(P)+Tf(P),\end{equation}
        for any $f\in L^{p}(\partial D)$, $1\leq p<\infty$;
  \item The non-tangential limit \begin{equation}\lim_{\substack{X\rightarrow P\\ X\in \Gamma(P)}}\int_{\partial D}\langle K_{m}(X, Q), n_{Q}\rangle f(Q)d\sigma(Q)
        =\mathrm{K}_{m}f(P)\end{equation}
        for any $m\geq2$ and $f\in L^{p}(\partial D)$, $1\leq p\leq\infty$,
        where \begin{equation}\mathrm{K}_{m}f(P)=\int_{\partial D}\langle K_{m}(P, Q), n_{Q}\rangle f(Q)d\sigma(Q),\,\,P\in\partial D\end{equation}\end{enumerate}
        which is a principle value integral defined as (3.3).
\end{thm}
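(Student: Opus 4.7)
My plan is to verify the five conclusions using the explicit formulas in Definition 2.5, the ultraspherical expansions (2.18)--(2.19), the Laplacian identities of Lemma 2.2, and the classical Dahlberg--Kenig--Verchota theory for the double layer potential on Lipschitz domains.

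For (1), smoothness of $D_m^{(j)}(X,Q)$ in $X$ off the diagonal is immediate from (2.10)--(2.11), and the subtracted singular part in (2.25)--(2.26) is a finite sum of products of $(x_j-v_j)$ with ultraspherical polynomials in $x_{S^n}\cdot v_{S^n}$ and powers or logs of $|X|$ and $|Q|$, so it is $C^\infty$ in $X$ on $D$ (the graphic hypothesis $\varphi(0)>0$ keeps the origin out of $\overline{D}$) and continuous in $Q$; hence $K_m\in (C^\infty\times C)(D\times\partial D)$, has a non-tangential boundary limit whenever $X\to P\ne Q$, and extends continuously to $\overline{D}\setminus\{P\}$. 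For (3), Lemma 2.2 gives $\Delta_X D_m^{(j)} = D_{m-1}^{(j)}$; a term-by-term computation on (2.25)--(2.26) then shows $\Delta_X\,\mathrm{S.P.}[D_m^{(j)}] = \mathrm{S.P.}[D_{m-1}^{(j)}]$, because truncating the ultraspherical expansion commutes with the Laplacian up to the order kept. The relation with $\Delta_Y$ follows from the anti-symmetry $K_m(X,Y) = -K_m(Y,X)$ of Proposition 2.6.

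The main technical step is (2). Fix a compact $D_c\subset\overline{D}$ and choose $T$ so large that $|X|/|Q|\le 1/2$ for all $X\in D_c$ and $|Q|>T$. Substituting (2.18), or (2.19) in the odd-$n$, $m\ge(n+3)/2$ case, into (2.10)--(2.11) expresses $D_m^{(j)}(X,Q)$ as a convergent series in $|X|/|Q|$, and by construction the singular part (2.25)--(2.26) subtracts exactly the terms $l=0,\ldots,2m-1$ of that series. The resulting tail starts at $l=2m$, giving leading order $|Q|^{-(2m+2\lambda)}=|Q|^{-(n+3)}$ with $2\lambda=n+3-2m$; uniform bounds on the ultraspherical polynomials on $[-1,1]$ and the geometric estimate $(|X|/|Q|)^l\le 2^{-l}$ show the tail sum converges uniformly and is $O(|Q|^{-(n+3)})$. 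Factoring out the $(x_j-q_j)=O(|X-Q|)$ factor gives
\[
|K_m(X,Q)|\le M\,|X-Q|\,|Q|^{-(n+3)},
\]
which is stronger than the claimed bound for any $0<\epsilon<1$; the extra $\log|Q|$ arising in the odd-$n$ log case is absorbed into the $|Q|^\epsilon$ slack.

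For (4), since $M_1$ is exactly the classical double layer potential (Remark 3.3), the non-tangential jump relation $(\tfrac12 I + T)f$ is precisely the one established by Verchota \cite{v0} and extended to the full range $1<p<\infty$ by Dahlberg--Kenig \cite{dk1}, so no new argument is required. For (5), the explicit formula (2.10)--(2.11) shows that when $m\ge 2$ the kernel $K_m$ is only weakly singular of order $|X-Q|^{2m-n-2}$ near the diagonal, and this singularity is uniformly integrable against the surface measure as $X$ approaches $\partial D$; combined with (1) and dominated convergence this yields the non-tangential limit without a jump, and justifies interpreting the limit as the principal-value integral $\mathrm{K}_m f(P)$ in (3.9). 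The chief obstacle in the whole proof is the bookkeeping in (2), where one must handle uniformly in $m$ the three distinct regimes (even $n$; odd $n$ with $m\le(n+1)/2$; odd $n$ with $m\ge(n+3)/2$), the last of which carries an additional logarithmic term.
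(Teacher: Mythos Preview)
Your proposal is correct and follows essentially the same strategy as the paper: explicit formulas from Definition~2.5 for (1), the ultraspherical tail expansion for the decay in (2) with the $\log|Q|$ absorbed into the $\epsilon$ slack, the commutation $\Delta_X\,\mathrm{S.P.}[D_m^{(j)}]=\mathrm{S.P.}[D_{m-1}^{(j)}]$ for (3), a citation of classical double-layer theory for (4), and weak singularity plus dominated convergence for (5). The paper carries out (5) with a more explicit three-region splitting (near/middle/far) and a separate case analysis for $2\le m\le (n+1)/2$ versus $m\ge (n+3)/2$, but the underlying ideas are exactly yours.
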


\begin{rem}
In this theorem and what follows, with respect to the Lipschitz graph domains, we emphasize that the Lipschitz funtion $\varphi$ should satisfy the condition $\varphi(0)>0$ in order to avoid $0\in \overline{D}$. This is only a technical requirement to guarantee the $L^{p}$-integrability on $\partial D$ and continuity on $\overline{D}$ of the kernels $K_{m}^{(j)}$. If $0\in \overline{D}$, we can take any fixed point $x_{0}\in \mathbb{R}^{n+1}\setminus\overline{D}$ and use it to redefine the singular parts of $K^{(j)}_{m}$ in (2.25) and (2.26) with the terms $|x|$ and $|v|$ replaced respectively by $|x-x_{0}|$ and $|v-x_{0}|$. So we do, then the above theorem and main results in the paper still hold with $x_{0}$ in place of $0$.
\end{rem}

\begin{proof}
By using the definition of the singular part, $\mathrm{S.
P.}[\cdot]$, and performing
similar calculations as to get (2.18) and (2.19), we get (2.25)
and (2.26). Note the explicit expressions (2.25) and (2.26), it
immediately follows that for any $m\in \mathbb{N}$, $K_{m}\in
(C^{\infty}\times C)(D\times \partial D)$, the non-tangential
boundary value
\begin{equation*}
        \lim_{\substack {X\rightarrow P \\X\in \Gamma(P),\, Q\in
        \partial D}}K_{m}(X,Q)=K_{m}(P,Q)
        \end{equation*}
        exists for all $P\in \partial D$ and $ P\ne Q\in
        \partial D$. Furthermore, $K_{m}(\cdot,P)$ can be continuously extended to
        $\overline{D}\setminus\{P\}$ for any fixed $P\in
        \partial D$, i.e., the claim (1) holds.

Note that
\begin{align*}
D_{1}^{(j)}(x,v)=-\frac{1}{\omega_{n}}\mathcal{P}_{j}(x-v)=-\frac{1}{\omega_{n}}\frac{x_{j}-v_{j}}{|x-v|^{n+1}}.
\end{align*}
So by the definition of the singular part,
\begin{equation}
\mathrm{S. P.} [D_{1}^{(j)}](x,v)\equiv0.
\end{equation}
Therefore
\begin{equation}
\langle K_{1}(X,Q), n_{Q}\rangle=\frac{1}{\omega_{n}}\frac{\langle
Q-X, n_{Q}\rangle}{|X-Q|^{n+1}}.
\end{equation}
Then by the theory of classical layer potentials \cite{f1,v0},
\begin{equation*}\lim_{\substack{X\rightarrow P\\ X\in
\Gamma(P)}}\int_{\partial D}\langle K_{1}(X, Q), n_{Q}\rangle
f(Q)d\sigma(Q)
        =\frac{1}{2}f(P)+Tf(P),\end{equation*}
        for any $f\in L^{p}(\partial D)$, $1\leq p<\infty$. Moreover, by the definition and Taylor's expansion, for sufficiently large $|v|>|x|,$
\begin{align}
\mathrm{I.P.}[D_{m}^{(j)}](x,v)=\begin{cases}
A_{m,n}(x_{j}-v_{j})C_{m,n}(x,v)\frac{1}{|v|^{n+3}},\,\, n\,even\,and\,any \,m, or\, n\, odd \,and\, m\leq\frac{n+1}{2},\vspace{2mm}\\
B_{m,n}(x_{j}-v_{j})\left[\widetilde{C}_{m,n}(x,v)+\widehat{C}_{m,n}(x,v)\log|v|\right]\frac{1}{|v|^{n+3}},\,\,n\,odd
\,and\, m\geq\frac{n+3}{2},
\end{cases}
\end{align}
where $A_{m,n}$ and $ B_{m,n}$ are positive constants depending only
on $m$ and $n$,
\begin{align}C_{m,n}(x,v)=|x|^{2m}\left\{\frac{d^{2m}}{dr^{2m}}\left[(1-2r(x_{S^{n}}\cdot
v_{S^{n}})+r^{2})^{m-\frac{n+3}{2}}\right]\right\}_{r=\theta}\end{align}
and
\begin{align}\widetilde{C}_{m,n}(x,v)=&|x|^{2m}\Big\{\frac{d^{2m}}{dr^{2m}}\Big[(1-2r(x_{S^{n}}\cdot
v_{S^{n}})+r^{2})^{m-\frac{n+3}{2}}\\
&\times\left[\frac{1}{2}\log(1-2r(x_{S^{n}}\cdot
v_{S^{n}})+r^{2})-\sum_{t=1}^{m-\frac{n+3}{2}}\left(\frac{1}{2t}+\frac{1}{2t+n+1}\right)\right]\Big]\Big\}_{r=\vartheta}\nonumber\end{align}
as well as
\begin{align}\widehat{C}_{m,n}(x,v)=|x|^{2m}\left\{\frac{d^{2m}}{dr^{2m}}\left[(1-2r(x_{S^{n}}\cdot
v_{S^{n}})+r^{2})^{m-\frac{n+3}{2}}\right]\right\}_{r=\varrho}\end{align}
with $0<\theta,\vartheta,\varrho<\frac{|x|}{|v|}<1$. Note that
\begin{equation}\lim_{|v|\rightarrow \infty}\frac{\log|v|}{|v|^{\epsilon}}=0\end{equation} for any $\epsilon>0$. Therefore, for any compact subset
$D_{c}$ of $\overline{D}$  and $X\in D_{c},$  by the continuity of
$C_{m,n}$, $\widetilde{C}_{m,n} $ and $\widehat{C}_{m,n}$, we have
\begin{equation}
|K_{m}^{(j)}(X,Q)|=\left|\mathrm{I.P.}[D_{m}^{(j)}](X,Q)\right|\leq
M\frac{|x_{j}-v_{j}|}{\left(1+|Q|^{2}\right)^{\frac{n+2+\epsilon}{2}}},
\end{equation}
where $0<\epsilon<1$, $(X,Q)\in D_{c}\times\{Q\in\partial D: |Q|>T\},$ $T$ is a
sufficiently large positive real number and $M$ is a positive
constant depending only on $\epsilon$, $D_{c}$ and $T$. Thus the claims (2) and
(4) are established.

From (2.25) and (2.26), we can simply denote
\begin{align}
\mathrm{S. P.}
[D_{m}^{(j)}](x,v)=&C_{m}(x_{j}-v_{j})\sum_{l=0}^{2m-1}c_{m,l}(x,v)|v|^{2m-n-3-l},
\end{align}
where $C_{m}$ is a constant depending only on $m,n$, and the
coefficient functions $c_{m,l}$ can be explicitly
expressed by the ultraspherical polynomials $P_{l}^{(\frac{n+3}{2}-m)}(x_{S^{n}}\cdot v_{S^{n}})$,
$Q_{l}^{(\frac{n+3}{2}-m)}(x_{S^{n}}\cdot v_{S^{n}})$, $|x|^{l}$ and $\log|v|$. Therefore,
\begin{align}
\Delta\Big[\mathrm{S. P.}
[D_{m}^{(j)}](x,v)\Big]=&C_{m}\sum_{l=0}^{2m-1}\Delta[(x_{j}-v_{j})c_{m,l}(x,v)]|v|^{2m-n-3-l}.
\end{align}
By Lemma 2.2, we have
\begin{equation}
\Delta K_{m}^{(j)}-K_{m-1}^{(j)}=\mathrm{S. P.}
[D_{m-1}^{(j)}]-\Delta\Big[\mathrm{S. P.} [D_{m}^{(j)}]\Big]
\end{equation}
for any $m\geq 2$. Due to (3.16) and (3.17), for sufficiently large $v$ (in deed, for all $v$),
\begin{equation*}
\Delta K_{m}^{(j)}=K_{m-1}^{(j)}\,\,\,\text{and}\,\,\,\mathrm{S. P.}
[D_{m-1}^{(j)}]=\Delta\Big[\mathrm{S. P.} [D_{m}^{(j)}]\Big]
\end{equation*}
for any $m\geq 2$. By taking into account $\Delta K_{1}=0$, and by Proposition 2.6, the
claim (3) follows.

Finally, we show that the claim (5) holds.\\
\textbf{Case 1}: $2\leq m\leq\frac{n+1}{2}$. Take a splitting,
\begin{align}
\int_{\partial D}\langle K_{m}(X,Q), n_{Q}\rangle f(Q)d\sigma(Q)&=\int_{\partial D\cap B_{\delta}(P)}\langle K_{m}(X,Q), n_{Q}\rangle f(Q)d\sigma(Q)\\
&\,\,\,\,\,\,\,+\int_{\partial D\cap B_{T}(P)\setminus
B_{\delta}(P)}\langle
K_{m}(X,Q), n_{Q}\rangle f(Q)d\sigma(Q)\nonumber\\
&\,\,\,\,\,\,\,+\int_{\partial D\setminus B_{T}(P)}\langle
K_{m}(X,Q), n_{Q}\rangle f(Q)d\sigma(Q)\nonumber\\
&\triangleq\mathrm{I}+\mathrm{II}+\mathrm{III},\nonumber
\end{align}
where $P$ is any fixed point in $\partial D$, $\delta, T>0$,
$\delta$ is sufficiently small while $T$ is sufficiently large,
$X\in \Gamma_{\gamma,\eta}(P)=\{X\in \Gamma_{\gamma}(P):
\mathrm{dist}(X,
\partial D)\leq \eta\}$, $0<\eta<\min\{\delta,\frac{1}{2}\}$, and $f\in
L^{p}(\partial D)$, $1\leq p\leq\infty$. By the claim (1),
$K_{m}^{(j)}(X,Q)$ is continuous on the compact
set $\Gamma_{\gamma,\eta}(P)\times\{Q\in
\partial D:\,\delta\leq|Q-P|\leq T\}$.  Therefore,
\begin{align}
\mathrm{II}\rightarrow \int_{\partial D\cap B_{T}(P)\setminus
B_{\delta}(P)}\langle K_{m}(P,Q), n_{Q}\rangle f(Q)d\sigma(Q)
\,\,\mathrm{as} \, \,X\rightarrow P, \,\,X\in
\Gamma_{\gamma,\eta}(P).
\end{align}
By the claim (2), for sufficiently large $T$ and some fixed $0<\epsilon_{0}<1$,  $X\in
\Gamma_{\gamma,\eta}(P)$  and  $|Q-P|>T$, we have
\begin{equation*}
|K_{m}^{(j)}(X,Q)|\leq
M\frac{|x_{j}-v_{j}|}{(1+|Q|^{2})^{\frac{n+2+\epsilon_{0}}{2}}},
\end{equation*}
where $M$ is a constant depending only on $\delta$, $T$ and $\epsilon_{0}$. So
\begin{equation}
|\langle K_{m}(X,Q), n_{Q}\rangle f(Q)|\leq
M\frac{|X-Q|}{(1+|Q|^{2})^{\frac{n+2+\epsilon_{0}}{2}}}|f(Q)|.
\end{equation}
The RHS of the above inequality belongs to $L^{1}(\partial D),$
because $\frac{|X-Q|}{(1+|Q|^{2})^{\frac{n+2+\epsilon_{0}}{2}}}\in L^{q}(\partial
D)\cap C_{0}(\partial D)$ and $f\in L^{p}(\partial D)$ for any $1\leq p\leq\infty$
and $q\geq1$, where $X\in \Gamma_{\gamma,\eta}(P)$ and $C_{0}(\partial
D)$ is the set of all continuous functions defined on $\partial D$ vanishing at
infinity. Since by (3.22),
$$\langle K_{m}(X,Q), n_{Q}\rangle f(Q)\rightarrow \langle K_{m}(P,Q), n_{Q}\rangle f(Q)$$ as $X\rightarrow P$ for any
$X\in\Gamma_{\gamma,\eta}(P)$ and $|Q-P|>T$, and Lebesgue's
dominated convergence theorem,
\begin{align}
\mathrm{III}\rightarrow \int_{\partial D\setminus B_{T}(P)}\langle
K_{m}(P,Q), n_{Q}\rangle f(Q)d\sigma(Q) \,\,\mathrm{as} \,
\,\,X\rightarrow P, X\in \Gamma_{\gamma,\eta}(P).
\end{align}
Write that
\begin{align}
\mathrm{I}^{(j)}&=\int_{\partial D\cap
B_{\delta}(P)}D_{m}^{(j)}(X,Q)n_{Q}^{(j)}f(Q)d\sigma(Q)\\
&\,\,\,\,\,-\int_{\partial D\cap B_{\delta}(P)}\mathrm{S.P.}[D_{m}^{(j)}](X,Q)n_{Q}^{(j)}f(Q)d\sigma(Q)\nonumber\\
&\triangleq \mathrm{I}_{1}^{(j)}-\mathrm{I}_{2}^{(j)}.\nonumber
\end{align}
Similarly to (3.21), by taking into account
$\mathrm{S.P.}[D_{m}^{(j)}](X,Q)\in
C(\Gamma_{\gamma,\eta}(P)\times\{Q\in \partial D:\,|Q-P|\leq
\delta\})$,
\begin{align}
\mathrm{I}_{2}^{(j)}\rightarrow \int_{\partial D\cap
B_{\delta}(P)}\mathrm{S.P.}[D_{m}^{(j)}](P,Q)n_{Q}^{(j)}f(Q)d\sigma(Q)
\,\,\mathrm{as} \, \,X\rightarrow P, \,\,X\in
\Gamma_{\gamma,\eta}(P).
\end{align}
For $X\in\Gamma_{\gamma,\eta}(P)$ and $|Q-P|<\delta<\frac{1}{2}$,
\begin{align}
D_{m}^{(j)}(X,Q)&=d_{m}\frac{|x_{j}-v_{j}|}{|X-Q|^{n+3-2m}}\\
&=d_{m}\frac{|x_{j}-v_{j}|}{\Big[|Q-P|^{2}+|X-P|^{2}-2(X-P)\cdot(Q-P)\Big]^{\frac{n+3}{2}-m}}\nonumber\\
&\leq
d_{m}\frac{|x_{j}-v_{j}|}{\Big[|Q-P|^{2}+|X-P|(1-2|Q-P|)\Big]^{\frac{n+3}{2}-m}}\nonumber\\
&\leq d_{m}\frac{|x_{j}-v_{j}|}{|Q-P|^{(n+3)-2m}},\nonumber
\end{align}
where $d_{m}=\frac{c_{n}}{\beta_{1}\beta_{2}\cdots\beta_{m-1}}.$
Therefore,
\begin{align}
|\mathrm{I}_{1}^{(j)}|&\leq d_{m}\int_{\partial D\cap
B_{\delta}(P)}|x_{j}-v_{j}|\frac{1}{|Q-P|^{(n+3)-2m}}|f(Q)|d\sigma(Q).
\end{align}
Since $2\leq (n+3)-2m\leq n-1$ (as $n=2$, we only need the second
inequality), then
\begin{align}
\mathrm{I}_{1}^{(j)}\rightarrow \int_{\partial D\cap
B_{\delta}(P)}D_{m}^{(j)}(P,Q)n_{Q}^{(j)}f(Q)d\sigma(Q)
\,\,\mathrm{as} \, \,X\rightarrow P, \,\,X\in
\Gamma_{\gamma,\eta}(P).
\end{align}
Therefore, in this case, by (3.20), (3.21), (3.23)-(3.25), (3.28),
\begin{equation*}\lim_{\substack{X\rightarrow P\\ X\in \Gamma_{\gamma}(P)}}\int_{\partial D}\langle K_{m}(X, Q), n_{Q}\rangle f(Q)d\sigma(Q)
        =\mathrm{K}_{m}f(P),\end{equation*}
        for any $f\in L^{p}(\partial D)$, $1\leq p\leq\infty$. \\
\textbf{Case 2}: $m\geq\frac{n+3}{2}$. For sufficiently large $T>0$,
we can split
\begin{align}
\int_{\partial D}K_{m}^{(j)}(X,Q)n_{Q}^{(j)}f(Q)d\sigma(Q)&=\int_{\partial D\cap B_{T}(P)}K_{m}^{(j)}(X,Q)n_{Q}^{(j)}f(Q)d\sigma(Q)\\
&\,\,\,\,\,\,+\int_{\partial D\setminus B_{T}(P)}K_{m}^{(j)}(X,Q)n_{Q}^{(j)}f(Q)d\sigma(Q)\nonumber\\
&\triangleq\mathrm{J}_{1}^{(j)}+\mathrm{J}_{2}^{(j)},\nonumber
\end{align}
where
\begin{align}
\mathrm{J}_{1}^{(j)}&=\int_{\partial D\cap B_{T}(P)}K_{m}^{(j)}(X,Q)n_{Q}^{(j)}f(Q)d\sigma(Q)\\
&=\int_{\partial D\cap
B_{T}(P)}D_{m}^{(j)}(X,Q)n_{Q}^{(j)}f(Q)d\sigma(Q)\\
&\,\,\,\,\,\,-\int_{\partial D\cap
B_{T}(P)}\mathrm{S. P.} [D_{m}^{(j)}](X,Q)n_{Q}^{(j)}f(Q)d\sigma(Q)\nonumber\\
&\triangleq\mathrm{J}_{11}^{(j)}-\mathrm{J}_{12}^{(j)}.\nonumber
\end{align}
Similarly to (3.23) and (3.25), we have
\begin{align}
\mathrm{J}_{2}^{(j)}\rightarrow \int_{\partial D\setminus
B_{T}(P)}K_{m}^{(j)}(P,Q)n_{Q}^{(j)}f(Q)d\sigma(Q) \,\,\mathrm{as}
\, \,X\rightarrow P, \,\,X\in \Gamma_{\gamma,\eta}(P)
\end{align}
and
\begin{align}
\mathrm{J}_{12}^{(j)}\rightarrow \int_{\partial D\cap
B_{T}(P)}\mathrm{S. P.} [D_{m}^{(j)}](P,Q)n_{Q}^{(j)}f(Q)d\sigma(Q)
\,\,\mathrm{as} \, \,X\rightarrow P, \,\,X\in
\Gamma_{\gamma,\eta}(P).
\end{align}
Since $m\geq\frac{n+3}{2}$, by (2.10) and (2.11),
$D_{m}^{(j)}(X,Q)\in
C(\Gamma_{\gamma,\eta}(P)\times\{Q\in \partial D:\,|Q-P|\leq T\}).$
Similarly to (3.28) (indeed, even more directly),
\begin{align}
\mathrm{J}_{11}^{(j)}\rightarrow \int_{\partial D\cap
B_{T}(P)}D_{m}^{(j)}(P,Q)n_{Q}^{(j)}f(Q)d\sigma(Q) \,\,\mathrm{as}
\, \,X\rightarrow P, \,\,X\in \Gamma_{\gamma,\eta}(P).
\end{align}
By (3.32)-(3.34), we have
\begin{equation*}\lim_{\substack{X\rightarrow P\\ X\in \Gamma_{\gamma}(P)}}\int_{\partial D}\langle K_{m}(X, Q), n_{Q}\rangle f(Q)d\sigma(Q)
        =\mathrm{K}_{m}f(P),\end{equation*}
        for any $f\in L^{p}(\partial D)$, $1\leq p\leq\infty$.

We thus conclude the claim (5) and the proof is complete.
\end{proof}

\subsection{$L^{p}$ boundedness properties of operators
$\mathrm{K}_{m}$ and multi-layer $\mathcal{D}$-potentials $M_{j}$}

In this section, we study the $L^{p}$ boundedness properties of the operators $\mathrm{K}_{m}$ given in (3.8) and the multi-layer $\mathcal{D}$-potentials $M_{j}$ defined by (3.2), which are very significant for the solving program in this paper.

To state the main results, we first introduce some necessary notions and notations which used thoroughly in the present section and what follows.

Let $D$ be a Lipschitz graph domain as in Theorem 3.5 and $w$ be a weight on $\partial D$, that is, a nonnegative locally integrable function on $\partial D$ with values in $(0,\infty)$ almost everywhere. For any $k,\alpha\geq 0$ and , if the weight $w$ on $\partial D$ satisfy
\begin{enumerate}
  \item $\left[|Q|^{k+\alpha}\left(1+\big|\log|Q|\big|\right)\right]w^{-1}(Q)\in L^{\infty}(\partial D)$;
  \item $\left[|Q|^{k}\left(1+\big|\log|Q|\big|\right)\right]^{p}|Q|^{\alpha-(p-1)n}w^{-1}(Q)\in L^{\frac{1}{p-1}}(\partial D)$ as $p\geq 1$,
\end{enumerate}
then $w$ is called to be a $(p,k,\alpha)$-weight on $\partial D$ and denote that $w\in \mathcal{W}^{p,k,\alpha}(\partial D)$ (Note that the above two conditions are the same as $p=1$). Here $\mathcal{W}^{p,k,\alpha}(\partial D)$ is the space consisting of all $(p,k,\alpha)$-weights on $\partial D$. It is easy to know that the spaces $\mathcal{W}^{p,k,\alpha}(\partial D)$ increases as $p$, $k$ and $\alpha$ decrease. That is,
\begin{prop}
Let $D$ be a Lipschitz graph domain as in Theorem 3.5, then
\begin{equation}\mathcal{W}^{p,k,\alpha}(\partial D)\subset\mathcal{W}^{p,l,\alpha}(\partial D)\subset\mathcal{W}^{q,l,\alpha}(\partial D)\subset\mathcal{W}^{q,l,\beta}(\partial D)\end{equation}
when $p>q>1$, $k>l$ and $\alpha>\beta$. Moreover, $\mathcal{W}^{1,k,\alpha}(\partial D)\subset\mathcal{W}^{1,l,\alpha}(\partial D)\subset\mathcal{W}^{1,l,\beta}(\partial D)$ as $k>l$ and $\alpha>\beta$.
\end{prop}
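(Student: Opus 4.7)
The plan is to verify the three nested inclusions separately, each corresponding to lowering a single parameter, and then to deduce the $p=1$ addendum. The key geometric ingredient is the hypothesis $\varphi(0)>0$ from Theorem 3.5: it forces $0\notin\overline{D}$, so there exists $\delta>0$ with $|Q|\geq\delta$ for every $Q\in\partial D$. Consequently, any fixed negative power of $|Q|$ is bounded by a power of $\delta$ on $\partial D$, and this alone will drive two of the three inclusions.

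For $\mathcal{W}^{p,k,\alpha}(\partial D)\subset\mathcal{W}^{p,l,\alpha}(\partial D)$ with $k>l$ I would simply write
\begin{equation*}
|Q|^{l+\alpha}\bigl(1+\bigl|\log|Q|\bigr|\bigr)w^{-1}(Q)=|Q|^{l-k}\cdot|Q|^{k+\alpha}\bigl(1+\bigl|\log|Q|\bigr|\bigr)w^{-1}(Q)
\end{equation*}
and factor out $|Q|^{p(l-k)}$ from $\bigl[|Q|^l(1+|\log|Q||)\bigr]^p|Q|^{\alpha-(p-1)n}w^{-1}(Q)$ in the same way. Since $l-k<0$, both $|Q|^{l-k}$ and $|Q|^{p(l-k)}$ are bounded above by powers of $\delta$, so conditions (1) and (2) transfer from the parameter $k$ to $l$. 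The analogous factoring of $|Q|^{\beta-\alpha}$ yields $\mathcal{W}^{q,l,\alpha}(\partial D)\subset\mathcal{W}^{q,l,\beta}(\partial D)$ when $\alpha>\beta$.

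The substantive step is $\mathcal{W}^{p,l,\alpha}(\partial D)\subset\mathcal{W}^{q,l,\alpha}(\partial D)$ for $p>q>1$. Condition (1) is untouched, so the whole problem is to derive condition (2) at exponent $\frac{1}{q-1}$ from condition (2) at exponent $\frac{1}{p-1}$ using condition (1) as an auxiliary pointwise bound. Set $g_r(Q)=\bigl[|Q|^l(1+|\log|Q||)\bigr]^r|Q|^{\alpha-(r-1)n}w^{-1}(Q)$. Introducing $\gamma=\frac{p-q}{(p-1)(q-1)}>0$, a direct computation gives
\begin{equation*}
\frac{q}{q-1}-\frac{p}{p-1}=\gamma,\qquad \frac{\alpha}{q-1}-\frac{\alpha}{p-1}=\alpha\gamma,\qquad \frac{1}{q-1}-\frac{1}{p-1}=\gamma,
\end{equation*}
so that the ratio collapses to
\begin{equation*}
\frac{g_q(Q)^{1/(q-1)}}{g_p(Q)^{1/(p-1)}}=\Bigl(|Q|^{l+\alpha}\bigl(1+\bigl|\log|Q|\bigr|\bigr)w^{-1}(Q)\Bigr)^{\gamma},
\end{equation*}
which is bounded by $M^{\gamma}$ by condition (1). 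Hence $g_q^{1/(q-1)}\leq M^{\gamma}\,g_p^{1/(p-1)}$ pointwise on $\partial D$, and integrating yields the required $L^{1/(q-1)}$ membership without any Hölder or measure argument. Chaining the three inclusions then gives the proposition.

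For the $p=1$ addendum, observe that condition (2) at $p=1$ becomes an $L^{\infty}$ bound identical to condition (1), so only one condition needs verification; the two inclusions $\mathcal{W}^{1,k,\alpha}\subset\mathcal{W}^{1,l,\alpha}$ and $\mathcal{W}^{1,l,\alpha}\subset\mathcal{W}^{1,l,\beta}$ follow immediately from the same $|Q|\geq\delta$ factoring trick used in the first and third inclusions above. The only delicate point I expect is the algebraic bookkeeping for the $p>q$ step, namely checking that the single scalar $\gamma$ governs all three exponent differences simultaneously, so that condition (1) plugs in as a clean pointwise bound.
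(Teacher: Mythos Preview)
Your proof is correct and follows essentially the same approach as the paper's: both treat the three inclusions separately, handle the $k\to l$ and $\alpha\to\beta$ steps by factoring out a negative power of $|Q|$ bounded via $|Q|\ge d_0$, and handle the $p\to q$ step by recognizing that the ratio of the two integrands is exactly a power (your $\gamma=\tfrac{1}{q-1}-\tfrac{1}{p-1}$) of the condition~(1) quantity. Your presentation is slightly cleaner in making the exponent bookkeeping explicit, but the argument is the same.
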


\begin{proof}
Note that $0\not\in\partial D$ and $|Q|\geq d_{0}$ for any $Q\in \partial D$. Therefore, when $k>l$, we have
\begin{align*}
&\int_{\partial D}\left\{\left[|Q|^{l}\left(1+\big|\log|Q|\big|\right)\right]^{p}|Q|^{\alpha}w^{-1}(Q)\right\}^{\frac{1}{p-1}}|Q|^{-n}d\sigma(Q)\\
=&\int_{\partial D}\frac{1}{|Q|^{\frac{k-l}{p-1}}}\left\{\left[|Q|^{k}\left(1+\big|\log|Q|\big|\right)\right]^{p}|Q|^{\alpha}w^{-1}(Q)\right\}^{\frac{1}{p-1}}|Q|^{-n}d\sigma(Q)\\
\leq & d_{0}^{-\frac{k-l}{p-1}}\int_{\partial D}\left\{\left[|Q|^{k}\left(1+\big|\log|Q|\big|\right)\right]^{p}|Q|^{\alpha}w^{-1}(Q)\right\}^{\frac{1}{p-1}}|Q|^{-n}d\sigma(Q)
\end{align*}
in which $p>1$; and similarly as $p=1$,
\begin{align*}
&\left[|Q|^{l+\alpha}\left(1+\big|\log|Q|\big|\right)\right]w^{-1}(Q)\leq d_{0}^{-(k-l)}\left[|Q|^{k+\alpha}\left(1+\big|\log|Q|\big|\right)\right]w^{-1}(Q).
\end{align*}
When $p>q>1$, we have
\begin{align*}
&\int_{\partial D}\left\{\left[|Q|^{l}\left(1+\big|\log|Q|\big|\right)\right]^{q}|Q|^{\alpha}w^{-1}(Q)\right\}^{\frac{1}{q-1}}|Q|^{-n}d\sigma(Q)\\
=&\int_{\partial D}\left[|Q|^{l}\left(1+\big|\log|Q|\big|\right)\right]^{\left(1+\frac{1}{q-1}\right)}\left(|Q|^{\alpha}w^{-1}(Q)\right)^{\frac{1}{q-1}}|Q|^{-n}d\sigma(Q)\\
=&\int_{\partial D}\left[|Q|^{l}\right]^{\left(1+\frac{1}{p-1}\right)}\left(|Q|^{\alpha}w^{-1}(Q)\right)^{\frac{1}{p-1}}\\
&\times \left\{\left[|Q|^{l+\alpha}\left(1+\big|\log|Q|\big|\right)\right]w^{-1}(Q)\right\}^{\left(\frac{1}{q-1}-\frac{1}{p-1}\right)}|Q|^{-n}d\sigma(Q)\\
\leq & \|w\|_{1, l, \alpha}^{\frac{p-q}{(p-1)(q-1)}}\int_{\partial D}\left\{\left[|Q|^{l}\left(1+\big|\log|Q|\big|\right)\right]^{p}|Q|^{\alpha}w^{-1}(Q)\right\}^{\frac{1}{p-1}}|Q|^{-n}d\sigma(Q),
\end{align*}
where
\begin{equation*}
\|w\|_{1, l, \alpha}=\sup_{Q\in\partial D}\left\{\left[|Q|^{l+\alpha}\left(1+\big|\log|Q|\big|\right)\right]w^{-1}(Q)\right\}.
\end{equation*}
When $\alpha>\beta$ and $q>1$, we have
\begin{align*}
&\int_{\partial D}\left\{\left[|Q|^{l}\left(1+\big|\log|Q|\big|\right)\right]^{q}|Q|^{\beta}w^{-1}(Q)\right\}^{\frac{1}{q-1}}|Q|^{-n}d\sigma(Q)\\
=&\int_{\partial D}|Q|^{\frac{\beta-\alpha}{q-1}}\left\{\left[|Q|^{l}\left(1+\big|\log|Q|\big|\right)\right]^{q}|Q|^{\alpha}w^{-1}(Q)\right\}^{\frac{1}{q-1}}|Q|^{-n}d\sigma(Q)\\
\leq & d_{0}^{-\frac{\alpha-\beta}{q-1}}\int_{\partial D}\left\{\left[|Q|^{l}\left(1+\big|\log|Q|\big|\right)\right]^{p}|Q|^{\alpha}w^{-1}(Q)\right\}^{\frac{1}{q-1}}|Q|^{-n}d\sigma(Q);
\end{align*}
and similarly as $q=1$,
\begin{align*}
&\left[|Q|^{l+\beta}\left(1+\big|\log|Q|\big|\right)\right]w^{-1}(Q)\leq d_{0}^{-(\alpha-\beta)}\left[|Q|^{l+\alpha}\left(1+\big|\log|Q|\big|\right)\right]w^{-1}(Q).
\end{align*}
Thus this proposition is completed.
\end{proof}

\begin{rem}
Moreover, by the condition (1) in the definition of $(p,k,\alpha)$-weights, it is easy to find that the weighted function spaces, $L^{p}(\partial D, wd\sigma)$ with $w\in \mathcal{W}^{p,k,\alpha}(\partial D)$, are subspaces of $L^{p}(\partial D)$ for any $1\leq p<\infty$ and $k,\alpha\geq 0$.
\end{rem}

Before stating the main results, we establish the following elementary and useful lemma.
\begin{lem}
Assume that $p\geq 1$ and $R\geq c_{0}$ with positive constant $c_{0}$ fixed, then
\begin{equation}
\int_{0}^{R}r|\log r|^{p}dr\leq CR^{2}\left[1+|\log R|^{p}\right]
\end{equation}
and
\begin{align}
\int_{R}^{\infty}\frac{|\log r|^{p}}{r^{2}}dr\leq C^{\prime}\frac{1}{\sqrt{R}}\left(1+|\log R|^{p}\right),
\end{align}
where the constants $C$ and $C^{\prime}$ depend only on $p$ and $c_{0}$.
\end{lem}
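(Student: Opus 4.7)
The plan is to reduce both inequalities to standard exponential integrals via the substitution $s=\log r$, after which each bound follows from a one-line integration-by-parts estimate. Since $R\ge c_{0}>0$ is uniformly bounded below, the two regimes ``$R$ near $c_{0}$'' and ``$R$ large'' can be handled separately and the transition absorbed into the constants.

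For the first inequality, setting $r=e^{s}$ gives
\[
\int_{0}^{R} r|\log r|^{p}\,dr \;=\; \int_{-\infty}^{\log R} e^{2s}|s|^{p}\,ds.
\]
Splitting at $s=0$, the tail $\int_{-\infty}^{0} e^{2s}|s|^{p}\,ds$ is a finite constant depending only on $p$, so it can be absorbed into $CR^{2}$ using $R\ge c_{0}$. For the remaining piece $\int_{0}^{\log R} e^{2s}s^{p}\,ds$ (present only when $R>1$), a single integration by parts yields
\[
\int_{0}^{\log R} e^{2s}s^{p}\,ds \;=\; \tfrac{1}{2}R^{2}(\log R)^{p} - \tfrac{p}{2}\int_{0}^{\log R} e^{2s}s^{p-1}\,ds,
\]
and iterating gives a bound of the form $C_{p}R^{2}(1+|\log R|^{p})$.

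For the second inequality, the same substitution gives
\[
\int_{R}^{\infty}\frac{|\log r|^{p}}{r^{2}}\,dr \;=\; \int_{\log R}^{\infty} s^{p}e^{-s}\,ds,
\]
which is an upper incomplete gamma integral. Writing $s^{p}e^{-s}=\bigl(s^{p}e^{-s/2}\bigr)\,e^{-s/2}$ and using $s^{p}e^{-s/2}\le C_{p}(1+s^{p})$ uniformly in $s\ge 0$, one obtains
\[
\int_{\log R}^{\infty} s^{p}e^{-s}\,ds \;\le\; C_{p}\bigl(1+|\log R|^{p}\bigr)\!\int_{\log R}^{\infty}\!e^{-s/2}\,ds \;=\; 2C_{p}\,\frac{1+|\log R|^{p}}{\sqrt{R}},
\]
which is the claimed bound (a stronger bound with $1/R$ in place of $1/\sqrt{R}$ also follows by integration by parts, but the stated form suffices for later applications).

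There is essentially no obstacle: both integrals are elementary in the logarithmic variable. The only small care needed is in treating the regimes $R<1$ and $R\ge 1$ separately so that $|\log R|$ is handled correctly, and in absorbing the piece of the integral on $(-\infty,0]$ (for the first inequality) or the neighborhood of $R\sim c_{0}$ (for the second) into the constants using the standing assumption $R\ge c_{0}$.
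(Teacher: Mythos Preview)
Your approach is essentially correct and, after the substitution $s=\log r$, coincides with the paper's argument: the paper works directly in $r$, splitting at $r=1$ and using the elementary pointwise bounds $(-r^{1/p}\log r)\le p/e$ on $(0,1)$ and $r^{-1/(2p)}\log r\le 2p/e$ on $[1,\infty)$, which under your substitution become exactly the boundedness of $|s|^{p}e^{2s}$ on $(-\infty,0]$ and of $s^{p}e^{-s/2}$ on $[0,\infty)$.

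Two small slips should be fixed. First, for the integral $\int_{0}^{\log R}e^{2s}s^{p}\,ds$, a single integration by parts already gives the upper bound $\tfrac{1}{2}R^{2}(\log R)^{p}$, since the remainder term is nonnegative; ``iterating'' is unnecessary and would not terminate when $p$ is not an integer. Second, in the argument for the second inequality, the stated bound $s^{p}e^{-s/2}\le C_{p}(1+s^{p})$ does not lead to the displayed line; what you actually need (and what holds, with $C_{p}=(2p/e)^{p}$) is the uniform bound $s^{p}e^{-s/2}\le C_{p}$ for $s\ge 0$, which immediately yields $\int_{\log R}^{\infty}s^{p}e^{-s}\,ds\le 2C_{p}R^{-1/2}$ when $R\ge 1$. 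Also, after the substitution the integrand is $|s|^{p}e^{-s}$, not $s^{p}e^{-s}$; this matters when $c_{0}\le R<1$, where the piece on $[\log R,0]$ is handled (as you note) by the crude bound $|s|^{p}\le|\log R|^{p}$ together with $R\ge c_{0}$.
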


\begin{proof}
At first, we estimate (3.36). If $0<R<1$, then
\begin{align}
\int_{0}^{R}r|\log r|^{p}dr&=\int_{0}^{R}|r^{\frac{1}{p}}\log r|^{p}dr=\int_{0}^{R}\left(-r^{\frac{1}{p}}\log r\right)^{p}dr\\
&\leq \left(\frac{p}{e}\right)^{p}R\leq c_{0}^{-1}\left(\frac{p}{e}\right)^{p}R^{2}\nonumber\\
&\leq c_{0}^{-1}\left(\frac{p}{e}\right)^{p}R^{2}\left[1+|\log R|^{p}\right]\nonumber;
\end{align}
while $R\geq 1$, then
\begin{align}
\int_{0}^{R}r|\log r|^{p}dr&=\int_{0}^{1}\left(-r^{\frac{1}{p}}\log r\right)^{p}dr+\int_{1}^{R}r(\log r)^{p}dr\\
&\leq \int_{0}^{R}\left[\left(\frac{p}{e}\right)^{p}+R(\log R)^{p}\right]dr;\nonumber\\
&= \left(\frac{p}{e}\right)^{p}R+R^{2}(\log R)^{p}\nonumber\\
&\leq C_{p}R^{2}\left[1+|\log R|^{p}\right]\nonumber
\end{align}
where $C_{p}=\max\left\{\left(\frac{p}{e}\right)^{p},1\right\}$. So (3.36) follows.

Next turn to (3.37). If $R\geq 1$, we have that
\begin{align}
\int_{R}^{\infty}\frac{|\log r|^{p}}{r^{2}}dr&=\int_{R}^{\infty}\frac{(\log r)^{p}}{r^{2}}dr=\int_{R}^{\infty}\frac{(r^{-\frac{1}{2p}}\log r)^{p}}{r^{\frac{3}{2}}}dr\\
&\leq 2^{p+1}\left(\frac{p}{e}\right)^{p}\frac{1}{\sqrt{R}}\nonumber.
\end{align}
If $0<R<1$, then we have
\begin{align}
\int_{R}^{\infty}\frac{|\log r|^{p}}{r^{2}}dr&=\int_{1}^{\infty}\frac{(\log r)^{p}}{r^{2}}dr+\int_{R}^{1}\frac{(-\log r)^{p}}{r^{2}}dr\\
&\leq 2^{p+1}\left(\frac{p}{e}\right)^{p}+|\log R|^{p}\left(\frac{1}{R}-1\right)\nonumber\\
&\leq 2^{p+1}\left(\frac{p}{e}\right)^{p}+\frac{2}{R}|\log R|^{p}\nonumber\\
&\leq C_{p}^{\prime}\frac{1}{\sqrt{R}}\left(1+|\log R|^{p}\right)\nonumber
\end{align}
where $C_{p}^{\prime}=c_{0}^{-\frac{1}{2}}\max\left\{2^{p+1}\left(\frac{p}{e}\right)^{p},2\right\}$. Therefore, (3.37) follows from the last inequalities.
\end{proof}

\begin{rem}
By observing the above proof, in fact, we get that for any $0<\epsilon<1$,
\begin{align}
\int_{R}^{\infty}\frac{|\log r|^{p}}{r^{2}}dr\leq \widehat{C}\frac{1}{R^{1-\epsilon}}\left(1+|\log R|^{p}\right),
\end{align}
where the constant $\widetilde{C}$ depends only on $p,c_{0}$ and $\epsilon$ and satisfies that $\lim_{\epsilon\rightarrow 0+}\widehat{C}=+\infty$ and $\lim_{\epsilon\rightarrow 1-}\widehat{C}=+\infty$.

\end{rem}

The main object of this section is to justify

\begin{equation}
\mathrm{K}_{m}: \,\,\,L^{p}(\partial D, wd\sigma)\longrightarrow
L^{p}(\partial D)
\end{equation}
and
\begin{equation}
M_{j}: \,\,\,L^{p}(\partial D, w^{\prime}d\sigma)\longrightarrow
L^{p}(D)
\end{equation}
are bounded with $$\|\mathrm{K}_{m}f\|_{L^{p}(\partial D)}\leq
C\| f \|_{L^{p}(\partial D, wd\sigma)}$$ and $$\|M_{j}f\|_{L^{p}(D)}\leq
\widetilde{C}\| f \|_{L^{p}(\partial D, wd\sigma)}, $$ where $M_{j}$ is the $j$th-layer $\mathcal{D}$-potential, $w, w^{\prime}$ are appropriate $(p,k,\alpha)$-weights, and $C, \widetilde{C}$ are some constants depending only on $m,n,p$ and $D$. More precisely, we have

\begin{thm}
Let the Lipschitz graph domain $D$ and the operators $\mathrm{K}_{m}$, $m\geq 2$, be the same as in Theorem 3.5, $w\in \mathcal{W}^{p,2m-2,\frac{1}{2}}(\partial D)$, $1\leq p<\infty$, then
\begin{equation}\|\mathrm{K}_{m}f\|_{L^{p}(\partial D)}\leq
C\| f \|_{L^{p}(\partial D, wd\sigma)}\end{equation}
for any $f\in L^{p}(\partial D, wd\sigma)$, where $C$ is a constant depending only on $m, n, p$ and $d_{0}={\rm dist}(0, \partial D)$. That is, $\mathrm{K}_{m}$, $m\geq 2$, are bounded from $L^{p}(\partial D, wd\sigma)$ to $L^{p}(\partial D)$ for any $w\in \mathcal{W}^{p,2m-2,\frac{1}{2}}(\partial D)$ with $1\leq p<\infty$.
\end{thm}

\begin{proof}
By the definition of Lipschitz domain, we can identify the space $L^{p}(\partial D)$ with the weighted space $L^{p}\left(\mathbb{R}^{n}, \sqrt{1+|\nabla\varphi|^{2}}dx\right)$. It is easy to verify that the space is comparable the standard space $L^{p}(\mathbb{R}^{n})$ due to the following inequalities
\begin{equation}\|f\|_{L^{p}(\mathbb{R}^{n})}\leq\|f\|_{L^{p}\left(\mathbb{R}^{n},\sqrt{1+|\nabla\varphi|^{2}}dx\right)}\leq \sqrt{1+L^{2}}\|f\|_{L^{p}(\mathbb{R}^{n})},\end{equation}
where $L$ is the Lipschitz constant of $D$. So here we can simply regard $L^{p}(\mathbb{R}^{n})$ as $L^{p}\left(\mathbb{R}^{n}, \sqrt{1+|\nabla\varphi|^{2}}dx\right)$  identically. Similarly, we can also identify $L^{p}(\partial D, wd\sigma)$ with $L^{p}(\mathbb{R}^{n}, wdx)$.

For simplicity, we will use the spaces $L^{p}(\mathbb{R}^{n})$ and $L^{p}(\mathbb{R}^{n}, wdx)$ to replace the spaces $L^{p}(\partial D)$ and $L^{p}(\partial D, wd\sigma)$ in the following argument.

By Minkowski's inequality (also for integrals) and H\"older's inequality, we have
\begin{align}\|\mathrm{K}_{m}f\|_{L^{p}(\mathbb{R}^{n})}&\leq C\sum_{j=1}^{n+1}\left\{\int_{\mathbb{R}^{n}}\left[\int_{\mathbb{R}^{n}}|K_{m}^{(j)}(x,y)f(y)|dy\right]^{p}dx\right\}^{\frac{1}{p}}\\
&\leq C\sum_{j=1}^{n+1}\int_{\mathbb{R}^{n}}\left[\int_{\mathbb{R}^{n}}|K_{m}^{(j)}(x,y)|^{p}dx\right]^{\frac{1}{p}}|f(y)|dy\nonumber\\
&\leq C\sum_{j=1}^{n+1}\int_{\mathbb{R}^{n}}\left[|y|^{2m-2}\left(1+\big|\log|y|\big|\right)\right]|y|^{\frac{1}{p}\left[\frac{1}{2}-(p-1)n\right]}|f(y)|dy\nonumber\\
&\leq C\sum_{j=1}^{n+1}\left\{\int_{\mathbb{R}^{n}}\left[|y|^{2m-2}\left(1+\big|\log|y|\big|\right)\right]^{\frac{p}{p-1}}\left(|y|^{\frac{1}{2}}w^{-1}(y)\right)^{\frac{1}{p-1}}|y|^{-n}dy\right\}^{\frac{p-1}{p}}\nonumber\\
&\,\,\,\,\,\,\times\|f\|_{L^{p}(\mathbb{R}^{n}, wdx)}\nonumber\\
&= C\sum_{j=1}^{n+1}\left\{\int_{\mathbb{R}^{n}}\left[\left[|y|^{2m-2}\left(1+\big|\log|y|\big|\right)\right]^{p}|y|^{\frac{1}{2}}w^{-1}(y)\right]^{\frac{1}{p-1}}|y|^{-n}dy\right\}^{\frac{p-1}{p}}\nonumber\\
&\,\,\,\,\,\,\times\|f\|_{L^{p}(\mathbb{R}^{n}, wdx)}\nonumber\\
&\leq C\|f\|_{L^{p}(\mathbb{R}^{n}, wdx)},\nonumber
\end{align}
since $w\in \mathcal{W}^{p,2m-2,\frac{1}{2}}(\partial D)$, where the constant $C$ depends only on $m,n,p,d_{0}$, and the following fact
\begin{equation}
\int_{\mathbb{R}^{n}}|K_{m}^{(j)}(x,y)|^{p}dx\leq C(m,n,d_{0}) \left[|y|^{2m-2}\left(1+\big|\log|y|\big|\right)\right]^{p}|y|^{\frac{1}{2}-(p-1)n}
\end{equation}
has been used in the third inequality, in which the constant $C(m,n,d_{0})$ depends only on $m,n$ and $d_{0}$.

To get (3.48), we split
\begin{align}
\int_{\mathbb{R}^{n}}|K_{m}^{(j)}(x,y)|^{p}dx&=\int_{|x|\leq 2|y|}|K_{m}^{(j)}(x,y)|^{p}dx+\int_{|x|> 2|y|}|K_{m}^{(j)}(x,y)|^{p}dx\\
&=\mathcal{I}_{1}(y)+\mathcal{I}_{2}(y).\nonumber
\end{align}
Note that
\begin{align}
\mathcal{I}_{1}(y)&\leq C_{p}\left\{\int_{|x|\leq 2|y|}|D_{m}^{(j)}(x,y)|^{p}dx+\left(\int_{|x|<|y|}+\int_{|y|<|x|<2|y|}\right)|\mathrm{S. P.} [D_{m}^{(j)}](x,y)|^{p}dx\right\}\\
&=\mathcal{I}_{1,1}(y)+\mathcal{I}_{1,2}(y)+\mathcal{I}_{1,3}(y).\nonumber
\end{align}

Firstly, to estimate $\mathcal{I}_{1,1}$, we note that $|x-y|\leq 3|y|$ when $|x|\leq 2|y|$, and
\begin{equation}
|D_{m}^{(j)}(x,y)|\leq C_{m,n}|x-y|^{2m-(n+2)}\left(1+\big|\log|x-y|\big|\right),
\end{equation}
where $C_{m,n}$ is a constant depending only $m$ and $n$, then by (3.36),
\begin{align}
\int_{|x|\leq 2|y|}|D_{m}^{(j)}(x,y)|^{p}dx&\leq \int_{|x-y|\leq 3|y|}|D_{m}^{(j)}(x,y)|^{p}dx\\
&\leq C(m, n)\int_{|x-y|\leq 3|y|}\left[|x-y|^{2m-(n+2)}\left(1+\big|\log|x-y|\big|\right)\right]^{p}dx\nonumber\\
&\leq C(m, n, p)|y|^{(2m-(n+2))p}\int_{|x-y|\leq 3|y|}\left[\left(1+\big|\log|x-y|\big|\right)\right]^{p}dx\nonumber\\
&\leq C(m, n, p)|y|^{(2m-2)p-(p-1)n}\left(1+\big|\log|y|\big|\right)^{p}\nonumber\\
&\leq C(m, n, p, d_{0})\left[|y|^{(2m-2)}\left(1+\big|\log|y|\big|\right)\right]^{p}|y|^{\frac{1}{2}-(p-1)n},\nonumber
\end{align}
where $C(\cdots)$ denotes a constant depending only on the parameters in the parenthesis, and the fact $|y|\geq d_{0}$ have been used in the last inequality.

Next to estimate $\mathcal{I}_{1,2}$. When $|x|<|y|$, by the definition
\begin{align}
|\mathrm{S. P.} [D_{m}^{(j)}](x,y)|\leq C_{m,n}|y|^{2m-n-2}\left(1+\big|\log|y|\big|\right),
\end{align}
we have
\begin{align}
\int_{|x|< |y|}|\mathrm{S. P.} [D_{m}^{(j)}](x,y)|^{p}dx&\leq C(m,n)\left[|y|^{2m-n-2}\left(1+\big|\log|y|\big|\right)\right]^{p}{\rm Vol}(B(0, |y|))\\
&\leq C(m, n)\left[|y|^{2m-2}\left(1+\big|\log|y|\big|\right)\right]^{p}|y|^{-(p-1)n}\nonumber\\
&\leq C(m, n, d_{0})\left[|y|^{2m-2}\left(1+\big|\log|y|\big|\right)\right]^{p}|y|^{\frac{1}{2}-(p-1)n},\nonumber
\end{align}
where the fact $|y|\geq d_{0}$ have been used in the last inequality.

The third to estimate $\mathcal{I}_{1,3}$. In this case, by the definition, as $|y|<|x|<2|y|$,
\begin{align}
|\mathrm{S. P.} [D_{m}^{(j)}](x,y)|\leq C_{m,n}|y|^{2m-n-2}\left(1+\big|\log|x|\big|\right),
\end{align}
then by (3.36), we obtain
\begin{align}
\int_{|y|<|x|<2|y|}|\mathrm{S. P.} [D_{m}^{(j)}](x,y)|^{p}dx&\leq C(m,n)|y|^{(2m-n-2)p}\int_{|y|<|x|<2|y|}(1+\big|\log|x|\big|)^{p}dx\\
&\leq C(m, n, p)\left[|y|^{2m-2}\left(1+\big|\log|y|\big|\right)\right]^{p}|y|^{-(p-1)n}\nonumber\\
&\leq C(m, n, p, d_{0})\left[|y|^{2m-2}\left(1+\big|\log|y|\big|\right)\right]^{p}|y|^{\frac{1}{2}-(p-1)n},\nonumber
\end{align}
where the fact $|y|\geq d_{0}$ have been used in the last inequality.

Finally, we turn to estimate $\mathcal{I}_{2}$.  Note that $r=\frac{|y|}{|x|}\in (0,\frac{1}{2})$ as $|x|>2|y|$, and $1-2r(x_{S^{n}}\cdot y_{S^{n}})+r^{2}\in (\frac{1}{4}, \frac{9}{4})$ as $r\in (0, \frac{1}{2})$. Thus by (3.11)-(3.14) and the definition, we have
\begin{align}
|\mathrm{I. P.} [D_{m}^{(j)}](x,y)|\leq C_{m,n}|y|^{2m}\left(1+\big|\log|x|\big|\right)\frac{1}{|x|^{n+2}}.
\end{align}
Therefore, by (3.37), we get
\begin{align}
\int_{|x|> 2|y|}|K_{m}^{(j)}(x,y)|dx&= \int_{|x|> 2|y|}|\mathrm{I. P.} [D_{m}^{(j)}](x,y)|dx\\
&\leq C(m, n)|y|^{2mp}\int_{|x|> 2|y|}\left[\frac{1+\big|\log|x|\big|}{|x|^{n+2}}\right]^{p}dx\nonumber\\
&\leq C(m, n, d_{0})|y|^{2mp-(n+2)(p-1)}\int_{|x|> 2|y|}\frac{\left(1+\big|\log|x|\big|\right)^{p}}{|x|^{n+2}}dx\nonumber\\
&=\leq C(m, n, d_{0})|y|^{(2m-1)p-(n+1)(p-1)}\int_{|x|> 2|y|}\frac{\left(1+\big|\log|x|\big|\right)^{p}}{|x|^{n+1}}dx\nonumber\\
&\leq C(m, n, p, d_{0})|y|^{(2m-2)p-(p-1)n+1}\left(\frac{1}{|y|}+\frac{\big|\log|y|\big|^{p}}{\sqrt{|y|}}\right)\nonumber\\
&\leq C(m, n, p, d_{0})\left[|y|^{2m-2}\left(1+|\log|y||\right)\right]^{p}|y|^{-(p-1)n+\frac{1}{2}}\nonumber
\end{align}
where the fact $|y|\geq d_{0}$ have been used in the last inequality.

Therefore, (3.48) follows from (3.49), (3.50), (3.52), (3.54), (3.56), (3.58). Thus the theorem is completed.
\end{proof}

\begin{thm}
Let the graph Lipschitz domain $D$ and operators $M_{j}$, $j\geq 2$, be the same as in Theorem 3.5, $w\in \mathcal{W}^{p,2j-2,\frac{3}{2}}(\partial D)$, $1\leq p<\infty$, then
\begin{equation}\|M_{j}f\|_{L^{p}(D)}\leq
C\| f \|_{L^{p}(\partial D, wd\sigma)}\end{equation}
for any $f\in L^{p}(\partial D, wd\sigma)$, where $C$ is a constant depending only on $m, n, p$ and $d_{0}$. That is, $M_{j}$, $j\geq 2$, are bounded from $L^{p}(\partial D, wd\sigma)$ to $L^{p}(D)$ for any $w\in \mathcal{W}^{p,2j-2,\frac{3}{2}}(\partial D)$ with $1\leq p<\infty$.
\end{thm}

\begin{proof}
It is similar to Theorem 3.11 only with $X\in D$ in place of $P\in\partial D$.
\end{proof}

\section{Polyharmonic Dirichlet problems in Lipschitz graph domains}

In this section, we solve the PHD problems (1.1), viz.,

\begin{equation}\begin{cases}\Delta^{m}u=0,\,\,in\,\, D,\\
  \Delta^{j}u=f_{j},\,\,on\,\,\partial D,
\end{cases}
  \end{equation}
where $u-M_{1}\widetilde{f}_{0}\in L^{p}(D)$ with $\|u-M_{1}\widetilde{f}_{0}\|_{L^{p}(D)}\leq C\sum_{j=1}^{m-1}\|f_{j}\|_{L^{p}(\partial D, wd\sigma)}$ in which the constant $C$ depends only on $m, n, p$ and $d_{0}$,
$\Delta=\sum_{k=1}^{n+1}\frac{\partial^{2}}{\partial
x_{k}^{2}}$, $D$ is a Lipschitz graph domain stated as in Theorem 3.5, $f_{0}\in
L^{p}(\partial D)$ and $f_{j}\in
L^{p}(\partial D, wd\sigma)$, $1\leq j\leq m-1$ for some suitable $p>1$, the $(p,2m-2,\frac{3}{2})$-weight $w$ on $\partial D$ is given as in section 3.1, $\widetilde{f}_{0}$ is related to all the boundary data $f_{j}$, $0\leq j<m$ and $m\in\mathbb{N}$.

To do so, firstly, we establish

\begin{lem}
Let $E$ be a simply connected unbounded domain in $\mathbb{R}^{n+1}$
with smooth boundless boundary $\partial E$. If $f\in (C^{1}\times
C)\left((\mathbb{R}^{n+1}\setminus \partial E)\times\partial
E\right)$ and there exist $g_{0}, g_{1}\in L^{p}(\partial E)$, $p\geq 1$
such that
\begin{equation}
|f(X,Q)|\leq M_{0}\frac{g_{0}(Q)}{(1+|Q|^{2})^{\frac{n}{2}}}
\end{equation}
and
\begin{equation}
|\frac{\partial}{\partial x_{j}}f(X,Q)|\leq
M_{1}\frac{g_{1}(Q)}{(1+|Q|^{2})^{\frac{n}{2}}}
\end{equation}
hold for any $(X,Q)\in E_{c}\times\{Q\in\partial E: |Q|>T\}$ and
$j=1,2,\ldots,n+1$, where $E_{c}$ is a compact subset of
$\mathbb{R}^{n+1}\setminus \partial E$, $T$ is a sufficiently large
positive real number and $M_{0}, M_{1}$ are positive constants
depending only on $E_{c}$ and $T$, then
\begin{equation}
\frac{\partial}{\partial x_{j}}\left(\int_{\partial
E}f(X,Q)d\sigma(Q)\right)=\int_{\partial E}\frac{\partial
f}{\partial x_{j}}(X,Q)d\sigma(Q),\,\,X\in \mathbb{R}^{n+1}\setminus
\partial E
\end{equation}
for any $1\leq j\leq n+1$, where $d\sigma$ is the surface measure of $\partial E$.
\end{lem}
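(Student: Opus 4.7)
The plan is to prove this by a standard differentiation-under-the-integral argument based on Lebesgue's dominated convergence theorem applied to the difference quotient. Fix $X_{0}\in \mathbb{R}^{n+1}\setminus\partial E$ and a closed ball $E_{c}=\overline{B_{r}(X_{0})}\subset \mathbb{R}^{n+1}\setminus\partial E$. For $|h|<r$, write
\begin{equation*}
\frac{1}{h}\left[\int_{\partial E}f(X_{0}+he_{j},Q)\,d\sigma(Q)-\int_{\partial E}f(X_{0},Q)\,d\sigma(Q)\right]=\int_{\partial E}\frac{f(X_{0}+he_{j},Q)-f(X_{0},Q)}{h}\,d\sigma(Q),
\end{equation*}
where $e_{j}$ is the $j$th standard basis vector. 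By the mean value theorem in the first variable (justified by $f(\cdot,Q)\in C^{1}$), the integrand equals $(\partial f/\partial x_{j})(X_{0}+\theta h\, e_{j},Q)$ for some $\theta=\theta(h,Q)\in(0,1)$, and $X_{0}+\theta h\,e_{j}$ stays in $E_{c}$.

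Next I would split $\partial E=(\partial E\cap\overline{B_{T}(0)})\cup(\partial E\setminus\overline{B_{T}(0)})$ with $T$ chosen as in the hypotheses. On the compact piece, joint continuity of $\partial f/\partial x_{j}$ on the compact set $E_{c}\times(\partial E\cap\overline{B_{T}(0)})$ yields a uniform bound, giving an $L^{1}$-dominating constant there. On the tail $\partial E\setminus\overline{B_{T}(0)}$, hypothesis (4.3) supplies the uniform pointwise bound
\begin{equation*}
\left|\frac{\partial f}{\partial x_{j}}(X_{0}+\theta h\,e_{j},Q)\right|\leq M_{1}\frac{g_{1}(Q)}{(1+|Q|^{2})^{n/2}},
\end{equation*}
independent of $h$ and $\theta$, which is the candidate dominating function.

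The key integrability check is that $g_{1}(Q)(1+|Q|^{2})^{-n/2}\in L^{1}(\partial E\setminus\overline{B_{T}(0)},d\sigma)$. Since $\partial E$ is (essentially) $n$-dimensional, $(1+|Q|^{2})^{-n/2}\in L^{q}(\partial E\setminus\overline{B_{T}(0)})$ for every $q>1$, and $L^{\infty}$ trivially. Hölder's inequality with $g_{1}\in L^{p}$ then yields an $L^{1}$ dominating function for any $p\geq 1$ (taking $q=p/(p-1)$ when $p>1$, and the $L^{\infty}$ bound when $p=1$). The same Hölder estimate combined with hypothesis (4.2) shows that $f(X,\cdot)\in L^{1}(\partial E,d\sigma)$ for each $X\in E_{c}$, so the two original integrals on the left-hand side of (4.4) are well-defined. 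Lebesgue's dominated convergence theorem then lets us pass $h\to 0$ inside the integral, and since the difference quotient converges pointwise to $(\partial f/\partial x_{j})(X_{0},Q)$ by the $C^{1}$ hypothesis in $X$, we obtain (4.4) at $X_{0}$, and therefore everywhere in $\mathbb{R}^{n+1}\setminus\partial E$.

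There is no substantive obstacle; the argument is routine once the compact-versus-tail split has been made. The only subtlety worth double-checking is that the decay rate $n/2$ in (4.2) and (4.3) is precisely what is needed to make $(1+|Q|^{2})^{-n/2}$ pair, via Hölder, with the $L^{p}$ functions $g_{0},g_{1}$ on the $n$-dimensional surface $\partial E$.
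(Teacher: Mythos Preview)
Your proposal is correct and follows essentially the same route as the paper's own proof: fix a point, form the difference quotient, apply the mean value theorem in the $X$-variable, split $\partial E$ into a compact piece $\{|Q|\leq T\}$ and a tail $\{|Q|>T\}$, bound the tail via hypothesis (4.3), and invoke Lebesgue's dominated convergence theorem. The only cosmetic difference is that the paper runs the limit along a sequence $t_l\to 0$ rather than a continuous parameter $h$, and your write-up is more explicit about the H\"older pairing that makes $g_1(Q)(1+|Q|^2)^{-n/2}$ integrable on the $n$-dimensional surface.
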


\begin{proof}
Fix $X=(x_{1},x_{2},\ldots,x_{n+1})\in E$ and
$j\in\{1,2,\ldots,n+1\}$, take $X_{l}=X+t_{l}e_{j}$ with
$\lim_{l\rightarrow+\infty}t_{l}=0$, and
$e_{j}=(0,\ldots,1,\ldots,0)\in \mathbb{R}^{n+1}$ whose the $j$th
element is $1$ and other ones are zero. Denote

\begin{eqnarray}
D_{l}(X,Q)&=&\frac{f(X_{l},Q)-f(X,Q)}{t_{l}}\\
              &=&\frac{\partial}{\partial x_{j}}f(X+\theta t_{l}e_{j},Q)\nonumber,
\end{eqnarray}
where $0<\theta<1$, then by (4.3),
\begin{eqnarray}
|D_{l}(X,Q)|\leq M_{1}\frac{g_{1}(Q)}{(1+|Q|^{2})^{\frac{n}{2}}}
\end{eqnarray}
uniformly in $\{Q\in\partial E: |Q|>T\}$ whenever $X_{l}\in\{Y:
|Y-X|\leq R\}\subset \mathbb{R}^{n+1}\setminus \partial E$ for some
$R>0$ and sufficiently large $T>0$. Since $f\in (C^{1}\times
C)\left((\mathbb{R}^{n+1}\setminus \partial E)\times\partial
E\right)$ and
\begin{equation}
\lim_{l\rightarrow+\infty}D_{l}(X,Q)=\frac{\partial f}{\partial
x_{j}}(X,Q),\,\,Q\in\partial E,
\end{equation}
by (4.2), (4.6), the continuity of $f$ on compact set $\{Y:
|Y-X|\leq R\}\times\{Q\in\partial D: |Q|\leq T\}$, and Lebesgue's
dominated convergence theorem,
\begin{align}
\lim_{l\rightarrow+\infty}\int_{\partial E}D_{l}(X,Q)d\sigma(Q)
=&\lim_{l\rightarrow+\infty}\Big[\int_{|Q|\leq T, Q\in\partial E}D_{l}(X,Q)d\sigma(Q)\\
&\,\,+\int_{|Q|> T, Q\in\partial E}D_{l}(X,Q)d\sigma(Q)\Big]\nonumber\\
=&\int_{|Q|\leq T, Q\in\partial E}\frac{\partial f}{\partial
x_{j}}(X,Q)d\sigma(Q)\nonumber\\
&\,\,+\int_{|Q|> T, Q\in\partial E}\frac{\partial
f}{\partial x_{j}}(X,Q)d\sigma(Q)\nonumber\\
=&\int_{\partial E}\frac{\partial f}{\partial
x_{j}}(X,Q)d\sigma(Q),\nonumber
\end{align}
i.e.,
\begin{eqnarray*}
\lim_{l\rightarrow+\infty}\frac{\int_{\partial
E}f(X_{l},Q)d\sigma(Q)-\int_{\partial E}f(X,Q)d\sigma(Q)}{t_{l}}=\int_{\partial
E}\frac{\partial f}{\partial x_{j}}(X,Q)d\sigma(Q).
\end{eqnarray*}
Since $X$ and the sequence $X_{l}$ are arbitrarily chosen, then
\begin{equation*}
\frac{\partial}{\partial x_{j}}\left(\int_{\partial
E}f(X,Q)d\sigma(Q)\right)=\int_{\partial E}\frac{\partial f}{\partial
x_{j}}(X,Q)d\sigma(Q)
\end{equation*}
for any $1\leq j\leq n+1$ and $X\in \mathbb{R}^{n+1}\setminus
\partial E$.
\end{proof}

As an immediate consequence, we have
\begin{cor}
Let $E$ be a simply connected unbounded domain in $\mathbb{R}^{n+1}$
with smooth boundless boundary $\partial E$. If $f\in (C^{2}\times
C)\left((\mathbb{R}^{n+1}\setminus \partial E)\times\partial
E\right)$ and there exist $g_{0}, g_{1},g_{2}\in L^{p}(\partial E)$,
$p\geq1$ such that
\begin{equation}
|f(X,Q)|\leq M_{0}\frac{g_{0}(Q)}{(1+|Q|^{2})^{\frac{n}{2}}},
\end{equation}
\begin{equation}
|\frac{\partial}{\partial x_{j}}f(X,Q)|\leq
M_{1}\frac{g_{1}(Q)}{(1+|Q|^{2})^{\frac{n}{2}}}
\end{equation}
and
\begin{equation}
|\frac{\partial^{2}}{\partial x_{j}^{2}}f(X,Q)|\leq
M_{2}\frac{g_{2}(Q)}{(1+|Q|^{2})^{\frac{n}{2}}}
\end{equation}
hold for any $(X,Q)\in E_{c}\times\{Q\in\partial E: |Q|>T\}$ and
$j=1,2,\ldots,n+1$, where $E_{c}$ is any compact subset of
$\mathbb{R}^{n+1}\setminus \partial E$, $T$ is a sufficiently large
positive real number and $M_{0}, M_{1}, M_{2}$ are positive
constants depending only on $E_{c}$ and $T$, then
\begin{equation}
\Delta\left(\int_{\partial E}f(X,Q)d\sigma(Q)\right)=\int_{\partial
E}\Delta f(X,Q)d\sigma(Q),\,\,X\in \mathbb{R}^{n+1}\setminus
\partial E.
\end{equation}
\end{cor}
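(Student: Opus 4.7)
The plan is to iterate Lemma 4.2 twice and then sum over coordinates. Fix $X\in\mathbb{R}^{n+1}\setminus\partial E$ and an index $j\in\{1,2,\ldots,n+1\}$. The first application uses $f$ itself as integrand: the hypothesis $f\in(C^{1}\times C)$ is inherited from $f\in(C^{2}\times C)$, the pointwise bound (4.9) supplies the first required estimate on $|f|$, and the bound (4.10) on $|\partial f/\partial x_{j}|$ with its $L^{p}$ dominant $g_{1}$ supplies the second. Lemma 4.2 therefore yields
\begin{equation*}
\frac{\partial}{\partial x_{j}}\int_{\partial E}f(X,Q)\,d\sigma(Q)=\int_{\partial E}\frac{\partial f}{\partial x_{j}}(X,Q)\,d\sigma(Q).
\end{equation*}

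Next I would apply Lemma 4.2 a second time with $h:=\partial f/\partial x_{j}$ in place of $f$. Since $f\in(C^{2}\times C)$, we have $h\in(C^{1}\times C)$; the bound required on $|h|$ is exactly (4.10), and the bound required on $|\partial h/\partial x_{j}|=|\partial^{2}f/\partial x_{j}^{2}|$ is exactly (4.11). A key observation is that although Lemma 4.2 is stated uniformly in $j$, its proof operates one direction at a time (the difference quotients fix $X_{l}=X+t_{l}e_{j}$), so to differentiate a second time in direction $x_{j}$ only the pure second partial $\partial^{2}f/\partial x_{j}^{2}$ is needed and no control on mixed second derivatives is required. This second application gives
\begin{equation*}
\frac{\partial^{2}}{\partial x_{j}^{2}}\int_{\partial E}f(X,Q)\,d\sigma(Q)=\int_{\partial E}\frac{\partial^{2}f}{\partial x_{j}^{2}}(X,Q)\,d\sigma(Q),
\end{equation*}
and summing over $j=1,\ldots,n+1$ produces the identity (4.12).

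The step requiring the most care is the verification of the hypotheses for the second application: one must confirm that the functions $g_{1},g_{2}\in L^{p}(\partial E)$ appearing in (4.10)–(4.11) play, respectively, the roles of the $L^{p}$ dominants for $|h|$ and $|\partial h/\partial x_{j}|$ in Lemma 4.2, with possibly relabelled constants $M_{0},M_{1}$, and valid on the same compact set $E_{c}$ and the same threshold $T$. Once this bookkeeping is carried out, both applications of Lemma 4.2 are direct and the corollary follows immediately.
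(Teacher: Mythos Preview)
Your proposal is correct and matches the paper's approach exactly: the paper presents this corollary as an immediate consequence of the preceding lemma, and your two-fold iteration of that lemma followed by summation over $j=1,\ldots,n+1$ is precisely what is intended. The only correction is a labelling slip: the differentiation-under-the-integral result you invoke is Lemma~4.1 in the paper's numbering, not ``Lemma~4.2'' (which is the corollary itself).
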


From the above corollary, we can obtain the following theorem
concerning the differentiability of the multi-layer $\mathcal{D}$-potentials.

\begin{thm}
Let $\{\,K_{m}\,\}_{m=1}^{\infty}$ be the sequence of higher order
Poisson fields as in the previous section, and $E$ be a simply
connected unbounded domain in $\mathbb{R}^{n+1}$ with smooth
boundless boundary $\partial E$. Then for any $m>1$ and $f\in
L^{p}(\partial E)$, $p\geq1$,
\begin{equation}
\Delta\left(\int_{\partial E}\langle K_{m}(X, Q), n_{Q}\rangle
f(Q)d\sigma(Q)\right)=\int_{\partial E}\langle K_{m-1}(X, Q),
n_{Q}\rangle f(Q)d\sigma(Q),
\end{equation}
where $X\in \mathbb{R}^{n+1}\setminus \partial E$, namely,
\begin{equation}
\Delta M_{m}f(X)=M_{m-1}f(X),\,\, X\in \mathbb{R}^{n+1}\setminus \partial E.
\end{equation}
\end{thm}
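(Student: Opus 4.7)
The natural plan is to justify differentiation under the integral sign by invoking Corollary 4.2 and then to substitute the identity $\Delta_X K_m(X,Q) = K_{m-1}(X,Q)$ from Theorem 3.5(3). Concretely, I would fix an arbitrary $X_0 \in \mathbb{R}^{n+1}\setminus\partial E$, choose a closed ball $E_c \subset \mathbb{R}^{n+1}\setminus\partial E$ containing $X_0$ in its interior, and verify that each component of $\langle K_m(X,Q), n_Q\rangle f(Q)$ together with its first and second partials in $X_j$ satisfies the hypotheses (4.9)--(4.11) on $E_c \times \{Q\in\partial E : |Q|>T\}$ for some sufficiently large $T$. Once this is done, Corollary 4.2 allows me to pull $\Delta$ inside the integral, and Theorem 3.5(3) converts $\Delta_X K_m$ into $K_{m-1}$, giving (4.13) and hence (4.14).

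The pointwise bound on the integrand itself is exactly Theorem 3.5(2), which yields
\[
\bigl|\langle K_m(X,Q), n_Q\rangle f(Q)\bigr| \;\leq\; M\,\frac{|X-Q|\,|f(Q)|}{(1+|Q|^2)^{(n+2+\epsilon)/2}}
\]
on $E_c\times\{|Q|>T\}$, and since $|X-Q|\leq (1+|Q|^2)^{1/2}(1+\mathrm{diam}\,E_c)$ uniformly in $X\in E_c$, the bound has the form $M_0\, g_0(Q)/(1+|Q|^2)^{n/2}$ required by (4.9), with $g_0(Q) = |f(Q)|/(1+|Q|^2)^{(1+\epsilon)/2}$ lying in $L^p(\partial E)$ by H\"older's inequality.

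The main obstacle is producing the analogous bounds (4.10)--(4.11) for $\partial_{x_j} K_m$ and $\partial_{x_j}^2 K_m$. I would derive these by repeating the argument used for Theorem 3.5(2): differentiate the explicit formulas (2.10)--(2.11) for $D_m^{(j)}$ and the singular part $\mathrm{S.P.}[D_m^{(j)}]$, then expand, as in (3.11)--(3.15), using the generating-function identities (2.18)--(2.19) with $\lambda = \tfrac{n+3}{2}-m$ to obtain the Taylor tail $\mathrm{I.P.}[\partial_{x_j}^\alpha D_m^{(j)}]$. Each derivative in $x_j$ either lowers the exponent of $|v|$ in the denominator by one while pulling out a factor $v_j/|v|$ or acts on the numerator $x_j - v_j$; in both cases the resulting expression still decays like $|v|^{-(n+2)}(1+|\log|v||)$ up to a bounded factor depending only on $E_c$. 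Hence one obtains estimates of the form
\[
\Bigl|\tfrac{\partial^\alpha}{\partial X^\alpha} K_m^{(j)}(X,Q)\Bigr| \;\leq\; M_{|\alpha|}\,\frac{1+|\log|Q||}{(1+|Q|^2)^{(n+2)/2}},\qquad |\alpha|\leq 2,
\]
valid on $E_c\times\{|Q|>T\}$, which in turn yields (4.10) and (4.11) with suitable $g_1,g_2\in L^p(\partial E)$.

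With the hypotheses of Corollary 4.2 verified, the interchange $\Delta \int_{\partial E} = \int_{\partial E}\Delta$ is legitimate at $X_0$; using Theorem 3.5(3) componentwise in $K_m = (K_m^{(1)},\ldots,K_m^{(n+1)})$ gives $\Delta_X \langle K_m(X,Q), n_Q\rangle = \langle K_{m-1}(X,Q), n_Q\rangle$, and since $X_0\in\mathbb{R}^{n+1}\setminus\partial E$ was arbitrary, equations (4.13) and (4.14) follow. I expect the only non-routine piece to be bookkeeping the $\log|Q|$ factor that appears when $n$ is odd and $m\geq\tfrac{n+3}{2}$, but this is absorbed by the strict decay $(1+|Q|^2)^{-(n+2)/2}$ exactly as in the proof of Theorem 3.5(2).
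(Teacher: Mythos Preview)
Your proposal is correct and follows essentially the same route as the paper: establish uniform decay bounds of the form $|\partial_X^\alpha K_m^{(j)}(X,Q)|\lesssim (1+|Q|^2)^{-(n+1+\epsilon)/2}$ for $|\alpha|\le 2$ on compact sets in $X$, feed these into Corollary~4.2 to justify passing $\Delta$ under the integral, and then invoke Theorem~3.5(3). One small remark: your heuristic that an $x_j$-derivative ``lowers the exponent of $|v|$ in the denominator'' is not quite the mechanism---since $X$ ranges over a compact set, $X$-differentiation acts only on the bounded $X$-dependent coefficients in the expansion (4.15) and on the factor $x_j-v_j$, leaving the $|Q|$-decay intact; but your stated conclusion and the overall argument are right.
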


\begin{proof}
From the claim (1) in Theorem 3.5 (by the same argument,
the claims (1)-(3) and (5) make sense for the present domains $E$
stated here), we know that  $K_{m}\in (C^{2}\times
C)\left((\mathbb{R}^{n+1}\setminus \partial E)\times
\partial E\right)$. For any $1\leq j\leq n+1$,
\begin{align}K_{m}^{(j)}(X,Q)=&D_{m}^{(j)}(X,Q)-\mathrm{S. P.} [D_{m}^{(j)}](X,Q)=\mathrm{I. P.}
[D_{m}^{(j)}](X,Q)\\
=&(x_{j}-v_{j})\sum_{k=2m}^{\infty}[C_{m,-k}(X,Q)+\widetilde{C}_{m,-k}(X,Q)\log|Q|]\frac{1}{(1+|Q|^{2})^{\frac{k}{2}-m+\frac{n+3}{2}}}\nonumber,\end{align}
for any $(X,Q)\in (\mathbb{R}^{n+1}\setminus \partial
E)\times\partial E$ with $|X|<|Q|$, where $C_{m,-k}$ and $\widetilde{C}_{m,-k}$ can be
explicitly expressed by the ultraspherical polynomials
$P_{l}^{(\frac{n+3}{2}-m)}$ and $Q_{l}^{(\frac{n+3}{2}-m)}$. So by the claim
(2) in Theorem 3.5, i.e., (3.16) and similar arguments to (3.16), we
obtain
\begin{equation}
|K_{m}^{(j)}(X,Q)|\leq M_{0}\frac{1}{(1+|Q|^{2})^{\frac{n+1+\epsilon}{2}}},
\end{equation}
\begin{equation}
|\frac{\partial}{\partial x_{l}}K_{m}^{(j)}(X,Q)|\leq
M_{1}\frac{1}{(1+|Q|^{2})^{\frac{n+1+\epsilon}{2}}}
\end{equation}
and
\begin{equation}
|\frac{\partial^{2}}{\partial x_{l}^{2}}K_{m}^{(j)}(X,Q)|\leq
M_{2}\frac{1}{(1+|Q|^{2})^{\frac{n+1+\epsilon}{2}}}
\end{equation}
for any $m\geq 2$, $1\leq l\leq n+1$, $0<\epsilon<1$, and $(X,Q)\in
E_{c}\times\{Q\in\partial E: |Q|>T\}$, where $E_{c}$ is any compact
subset of $\mathbb{R}^{n+1}\setminus \partial E$, $T$ is a
sufficiently large positive real number and $M_{0},M_{1},M_{2}$ are
positive constants depending only on $E_{c}, T$ and $\epsilon$. Therefore, by a similar argument as
Corollary 4.2 and the claim (3) in Theorem 3.5, for any $m>1$,
\begin{equation}
\Delta\left(\int_{\partial E}\langle K_{m}(X, Q), n_{Q}\rangle
f(Q)d\sigma(Q)\right)=\int_{\partial E}\langle K_{m-1}(X, Q),
n_{Q}\rangle f(Q)d\sigma(Q),
\end{equation}
where $X\in \mathbb{R}^{n+1}\setminus
\partial E$, i.e.,
\begin{equation*}
\Delta M_{m}f(X)=M_{m-1}f(X),\,\, X\in \mathbb{R}^{n+1}\setminus
\partial E. \qedhere
\end{equation*}
\end{proof}

\begin{rem}
By the same arguments, all of the above results hold when the
domains $E$ are replaced by the Lipschitz graph domains $D$ stated as in Theorem 3.5.
\end{rem}

Now we can give the main result for polyharmonic Dirichlet problems
in Lipschitz graph domains as follows.

\begin{thm}
Let $\{\,K_{m}\,\}_{m=1}^{\infty}$ be the sequence of the Poisson
fields, and $D$ be a Lipschitz graph domain in
$\mathbb{R}^{n+1}$ with Lipschitz graph boundary $\partial D$ as in Theorem 3.5, then
for any $m>1$, there exists $\varepsilon=\varepsilon(D)>0$ such that
the PHD problem (4.1) with the data $f_{0}\in L^{p}(\partial D)$ and $f_{j}\in L^{p}(\partial D, wd\sigma)$ with $w\in W^{p, 2m-2,\frac{3}{2}}(\partial D)$, $1\leq j\leq m-1$, $2-\varepsilon<p<\infty$, is solvable and a
solution is given by
\begin{align}
u(X)&=\sum_{j=1}^{m}\int_{\partial D}\langle K_{j}(X, Q),
n_{Q}\rangle \widetilde{f}_{j-1}(Q)d\sigma(Q),\\
&=\sum_{j=1}^{m}M_{j}\widetilde{f}_{j-1}(X),\,\,\,\, X\in D,\nonumber
\end{align}
where
\begin{equation}\widetilde{f}_{m-1}=\left(\frac{1}{2}I+T\right)^{-1}f_{m-1}\end{equation} and
\begin{equation}\widetilde{f}_{l}=\left(\frac{1}{2}I+T\right)^{-1}\left(f_{l}-\sum_{j=l+2}^{m}\mathrm{K}_{j-l}\widetilde{f}_{j-1}\right)\end{equation}
with $0\leq l\leq m-2$, which satisfying the following estimate
\begin{equation}\|u-M_{1}\widetilde{f}_{0}\|_{L^{p}(D)}\leq C\sum_{j=1}^{m-1}\|f_{j}\|_{L^{p}(\partial D, wd\sigma)}.\end{equation}
Under this estimate, the solution (4.20) with (4.21) and (4.22) is unique.

\end{thm}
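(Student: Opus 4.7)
The plan is to execute four logically independent steps: (i) verify $\Delta^{m}u=0$ in $D$; (ii) verify $\Delta^{j}u=f_{j}$ on $\partial D$ for $0\le j\le m-1$; (iii) establish the norm estimate; (iv) prove uniqueness within the class defined by that estimate.

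For (i), I would apply Theorem 4.3 (together with Remark 4.4 extending it to the graphic Lipschitz setting) term by term to the ansatz (4.20). Since $\Delta K_{1}\equiv 0$ by Theorem 3.5(3), one has $\Delta M_{1}\widetilde{f}_{0}=0$, while $\Delta M_{j}\widetilde{f}_{j-1}=M_{j-1}\widetilde{f}_{j-1}$ for $j\ge 2$. Writing $S_{l}(X):=\sum_{j=1}^{m-l}M_{j}\widetilde{f}_{j+l-1}(X)$, a reindexing shows $\Delta S_{l}=S_{l+1}$, so $\Delta^{l}u=S_{l}$ in $D$ and $\Delta^{m}u=\Delta(M_{1}\widetilde{f}_{m-1})=0$. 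For (ii), I take non-tangential limits at $P\in\partial D$: by parts (4) and (5) of Theorem 3.5,
\begin{equation*}
\lim_{\substack{X\to P\\ X\in\Gamma(P)}}S_{l}(X)=\bigl(\tfrac{1}{2}I+T\bigr)\widetilde{f}_{l}(P)+\sum_{j=2}^{m-l}\mathrm{K}_{j}\widetilde{f}_{j+l-1}(P).
\end{equation*}
Setting this equal to $f_{l}(P)$ and reindexing ($k=j+l-1$) produces precisely the relation $(\tfrac12 I+T)\widetilde{f}_{l}=f_{l}-\sum_{k=l+1}^{m-1}\mathrm{K}_{k-l+1}\widetilde{f}_{k}$ that is solved by (4.22). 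Lemma 3.4 guarantees the invertibility of $\tfrac12 I+T$ on $L^{p}(\partial D)$ for $2-\varepsilon<p<\infty$, so the backward recursion beginning with $\widetilde{f}_{m-1}$ from (4.21) is well-defined and delivers each $\widetilde{f}_{l}\in L^{p}(\partial D)$.

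For (iii), I would decompose $u-M_{1}\widetilde{f}_{0}=\sum_{j=2}^{m}M_{j}\widetilde{f}_{j-1}$ and bound each summand by Theorem 3.12, reducing the problem to controlling $\|\widetilde{f}_{l}\|_{L^{p}(\partial D,\,wd\sigma)}$ by $\sum_{k\ge l}\|f_{k}\|_{L^{p}(\partial D,\,wd\sigma)}$. The operators $\mathrm{K}_{k-l+1}$ map $L^{p}(\partial D,\,wd\sigma)\to L^{p}(\partial D)$ boundedly by Theorem 3.11, and $(\tfrac12 I+T)^{-1}$ is bounded on $L^{p}(\partial D)$; a downward induction on $l$, combined with the monotonicity of the weight classes $\mathcal{W}^{p,k,\alpha}(\partial D)$ in Proposition 3.8, then yields the asserted estimate on $\|u-M_{1}\widetilde{f}_{0}\|_{L^{p}(D)}$. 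For (iv), given two solutions $u,u'$ with identical data, the difference $v:=u-u'$ is polyharmonic in $D$ with $\Delta^{j}v\big|_{\partial D}=0$ for $0\le j\le m-1$ and $v\in L^{p}(D)$. I would argue iteratively: $\Delta^{m-1}v$ is harmonic in $D$ with vanishing $L^{p}$ boundary trace, hence vanishes by the $L^{p}$ uniqueness of the Dirichlet problem for Laplace's equation in Lipschitz graphic domains (Dahlberg--Kenig--Verchota); then $\Delta^{m-2}v$ is harmonic with zero boundary trace, and so on, until $v\equiv 0$.

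The main obstacle I expect is step (iv): at each descent one must certify that $\Delta^{j}v$ lies in the uniqueness class for the second-order $L^{p}$ Dirichlet problem in the unbounded graphic domain, which is not automatic from $v\in L^{p}(D)$ alone. One must extract the appropriate non-tangential control or $L^{p}$ decay at infinity for each $\Delta^{j}v$, combining the $L^{p}$ hypothesis on $u-M_{1}\widetilde{f}_{0}$ with the explicit decay of the kernels $K_{j}$ supplied by Theorem 3.5(2) and the $L^{p}(D)$ bounds from Theorem 3.12. A secondary bookkeeping point in step (iii) is that the weight indices $(2j-2,\tfrac32)$ required by Theorem 3.12 must be shown compatible with the fixed weight class $\mathcal{W}^{p,2m-2,3/2}(\partial D)$ declared in the hypothesis; this is precisely where Proposition 3.8 is invoked.
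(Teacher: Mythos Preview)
Your steps (i)--(iii) match the paper's proof essentially line for line: the paper also writes the ansatz (4.20), applies $\Delta^{l}$ via Theorem~4.3 to obtain the triangular system, passes to non-tangential limits using Theorem~3.5(4)--(5), and inverts recursively via Lemma~3.4 and Theorem~3.11; the $L^{p}(D)$ estimate is then obtained exactly as you describe, from Theorems~3.11, 3.12 and Lemma~3.4.

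Where you diverge is step~(iv). The paper disposes of uniqueness in a single sentence---``So by the above estimate, the uniqueness of solution follows''---apparently reading the estimate as forcing $u=M_{1}\widetilde f_{0}$ once all $f_{j}=0$ (whence $\widetilde f_{0}=0$). Your proposed descent argument, peeling off $\Delta^{m-1}v,\Delta^{m-2}v,\ldots$ via second-order $L^{p}$ Dirichlet uniqueness, is more substantive, and you are right to flag the obstacle: membership of $v$ in $L^{p}(D)$ does not by itself place each $\Delta^{j}v$ in the Dahlberg--Kenig--Verchota uniqueness class on an unbounded graphic domain. The paper does not confront this point; it simply takes the estimate (4.23) as defining the solution class and treats uniqueness as tautological within that class. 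So your argument is more honest about what needs to be checked, but be aware that the paper's own proof does not supply the extra input (non-tangential control of $\Delta^{j}v$) you correctly identify as missing---it sidesteps the issue rather than resolving it. Your remark on Proposition~3.7 (the monotonicity of the weight classes) is also a detail the paper leaves implicit.
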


\begin{proof}
At first, we consider the existence of solution to (4.1). Formally, denote the solution of (4.1) as follows
\begin{equation}
u(X)=M_{1}\widetilde{f}_{0}(X)+M_{2}\widetilde{f}_{1}(X)+\cdots+M_{m}\widetilde{f}_{m-1}(X)
\end{equation}
for some functions $\widetilde{f}_{j}$, $0\leq j\leq m-1$ to be determined soon, where $M_{j}$ is the $j$th-layer $\mathcal{D}$-potential.

Letting the polyharmonic operators $\Delta^{l}$, $0\leq
l\leq m$, acting on two sides of (4.24), by Theorem 4.3, we formally have
\begin{align}
\begin{cases}
u(X)&=M_{1}\widetilde{f}_{0}(X)+M_{2}\widetilde{f}_{1}(X)+M_{3}\widetilde{f}_{2}(X)+\cdots+M_{m}\widetilde{f}_{m-1}(X),\\
\Delta u(X)&=M_{1}\widetilde{f}_{1}(X)+M_{2}\widetilde{f}_{2}(X)+\cdots+M_{m-1}\widetilde{f}_{m-1}(X),\nonumber\\
\Delta^{2} u(X)&=M_{1}\widetilde{f}_{2}(X)+\cdots+M_{m-2}\widetilde{f}_{m-1}(X),\nonumber\\
&\cdots\nonumber\\
\Delta^{m-1} u(X)&=M_{1}\widetilde{f}_{m-1}(X),\nonumber\\
\Delta^{m} u(X)&=0.
\end{cases}
\end{align}
Furthermore, let $X\in D$ converge to $P\in\partial D$ non-tangentially , by (3.6) and (3.7), using the boundary value data of (4.1), then
\begin{align}
\begin{cases}
f_{0}(P)&=\left(\frac{1}{2}I+T\right)\widetilde{f}_{0}(P)+\mathrm{K}_{2}\widetilde{f}_{1}(P)+\mathrm{K}_{3}\widetilde{f}_{2}(P)+\cdots+\mathrm{K}_{m}\widetilde{f}_{m-1}(P),\\
f_{1}(P)&=\left(\frac{1}{2}I+T\right)\widetilde{f}_{1}(P)+\mathrm{K}_{2}\widetilde{f}_{2}(P)+\cdots+\mathrm{K}_{m-1}\widetilde{f}_{m-1}(P),\nonumber\\
f_{2}(P)&=\left(\frac{1}{2}I+T\right)\widetilde{f}_{2}(P)+\cdots+\mathrm{K}_{m-2}\widetilde{f}_{m-1}(P),\nonumber\\
&\cdots\nonumber\\
f_{m-1}(P)&=\left(\frac{1}{2}I+T\right)\widetilde{f}_{m-1}(P).\nonumber\\
\end{cases}
\end{align}

By the invertible property of $\frac{1}{2}I+T$ and $L^{p}$ boundness of $\mathrm{K}_{m}$, then we have
\begin{align}
\begin{cases}
\widetilde{f}_{0}(P)&=\left(\frac{1}{2}I+T\right)^{-1}\left[f_{0}(P)-\mathrm{K}_{2}\widetilde{f}_{1}(P)-\mathrm{K}_{3}\widetilde{f}_{2}(P)-\cdots-\mathrm{K}_{m}\widetilde{f}_{m-1}(P)\right],\\
\widetilde{f}_{1}(P)&=\left(\frac{1}{2}I+T\right)^{-1}\left[f_{1}(P)-\mathrm{K}_{2}\widetilde{f}_{2}(P)-\cdots-\mathrm{K}_{m-1}\widetilde{f}_{m-1}(P)\right],\nonumber\\
\widetilde{f}_{2}(P)&=\left(\frac{1}{2}I+T\right)^{-1}\left[f_{2}(P)-\cdots-\mathrm{K}_{m-2}\widetilde{f}_{m-1}(P)\right],\nonumber\\
&\cdots\nonumber\\
\widetilde{f}_{m-1}(P)&=\left(\frac{1}{2}I+T\right)^{-1}f_{m-1}(P).\nonumber\\
\end{cases}
\end{align}

Therefore, we get
\begin{align}
\begin{cases}
\widetilde{f}_{m-1}=\left(\frac{1}{2}I+T\right)^{-1}f_{m-1},\vspace{2mm}\\
\widetilde{f}_{l}=\left(\frac{1}{2}I+T\right)^{-1}\left[f_{l}-\sum_{j=l+2}^{m}\mathrm{K}_{j-l}\widetilde{f}_{j-1}\right],
\end{cases}
\end{align}
where $0\leq l\leq m-2$.
More concisely,
\begin{equation}
\widetilde{f}_{l}=\left(\frac{1}{2}I+T\right)^{-1}\left(f_{l}-\sum_{j=l+2}^{m}\mathrm{K}_{j-l}\widetilde{f}_{j-1}\right)
\end{equation}
with $0\leq l\leq m-1$ by the convention that $\sum_{j=l}^{k}s_{j}=0$ as $k<l$.

Noting Remark 3.8, by Lemma 3.4 and Theorem 3.11, it is noteworthy that the above formal reasoning makes sense when $f_{0}\in L^{p}(\partial D)$ and $f_{j}\in L^{p}(\partial D, wd\sigma)$ with $w\in \mathcal{W}^{p, 2m-2,\frac{3}{2}}(\partial D)$, $1\leq j\leq m-1$, $2-\varepsilon<p<\infty$, where $\varepsilon$ is the same as in Lemma 3.4. That is, a solution of (4.1) is (4.20) with (4.21) and (4.22).

Next we turn to the estimate and uniqueness of the solution. By Theorems 3.11, 3.12, and Lemma 3.4, we have
\begin{align}\|u-M_{1}\widetilde{f}_{0}\|_{L^{p}(D)}&=\|\sum_{j=2}^{m}M_{j}\widetilde{f}_{j-1}\|_{L^{p}(D)}\\
&\leq \sum_{j=2}^{m}\| M_{j}\widetilde{f}_{j-1} \|_{L^{p}(D)}\nonumber\\
&\leq C\sum_{j=1}^{m-1}\| f_{j} \|_{L^{p}(\partial D,\,wd\sigma)}\nonumber
\end{align}
where $w\in \mathcal{W}^{p, 2m-2,\frac{3}{2}}(\partial D)$ with $2-\varepsilon<p<\infty$, and the constant $C$ depends only on $m,n,p$ and $d_{0}$.

So by the above estimate, the uniqueness of solution follows. Thus this theorem is completed.
\end{proof}

\section{Polyharmonic fundamental solutions}
By similar computations as in Section 2,  it is easy to know that
\begin{equation*}
\Delta \left(|x|^{s}\right)=s(s+n-1)|x|^{s-2},
\end{equation*}
\begin{equation*}
\Delta
\left(|x|^{s}\log|x|\right)=s(s+n-1)|x|^{s-2}\log|x|+(2s+n-1)|x|^{s-2}
\end{equation*}
and
\begin{equation*}
\Delta \left(\log|x|\right)=(n-1)|x|^{-2}.
\end{equation*}

Set \begin{equation}
\delta_{s}=s(s+n-1),
\end{equation}
therefore
\begin{equation}
\Delta\left(\frac{1}{\delta_{s}}|x|^{s}\right)=|x|^{s-2},
\end{equation}

\begin{equation}
\Delta\left(\frac{1}{\delta_{s}}|x|^{s}\log|x|\right)=|x|^{s-2}\log|x|+\left(\frac{1}{s}+\frac{1}{s+n-1}\right)|x|^{s-2}
\end{equation}
and
\begin{equation}
\Delta\left(\frac{1}{n-1}\log|x|\right)=|x|^{-2}.
\end{equation}

\begin{lem}
Let
\begin{equation}
\mathcal{D}_{1}(x,v)=\mathcal{C}_{n}\frac{1}{|x-v|^{n-1}}
\end{equation}
where
\begin{equation}\mathcal{C}_{n}=\frac{1}{(n-1)\omega_{n}}.\end{equation} For
$m\geq 2$,
\begin{align}
\mathcal{D}_{m}(x,v)=\frac{\mathcal{C}_{n}}{\gamma_{1}\gamma_{2}\cdots\gamma_{m-1}}|x-v|^{2m-(n+1)}
\end{align}
if  $n$ is even, and
\begin{align}
\mathcal{D}_{m}(x,v)=\begin{cases}\frac{\mathcal{C}_{n}}{\gamma_{1}\gamma_{2}\cdots\gamma_{m-1}}|x-v|^{2m-(n+1)},\,\,\,\,m\leq \frac{n-1}{2},\vspace{2mm}\\
\frac{\mathcal{C}_{n}}{(n-1)\gamma_{1}\gamma_{2}\cdots\gamma_{\frac{n-1}{2}-1}\delta_{2}\delta_{4}\cdots\delta_{2m-n-1}}|x-v|^{2m-(n+1)}\vspace{1mm}\\
\times\left[\log|x-v|+\frac{1}{n+1}-\sum_{t=1}^{m-\frac{n+1}{2}}\left(\frac{1}{2t}+\frac{1}{2t+n-1}\right)\right],\,\,\,m\geq\frac{n+1}{2}
\end{cases}
\end{align}
if $n$ is odd, where
\begin{equation}\gamma_{k}=\delta_{2k-n+1},\,\,\,k=1,2,\ldots,m-1.\end{equation}
Then
\begin{equation}
\Delta \mathcal{D}_{1}(x,v)=0\,\,\mbox{and}\,\,\Delta
\mathcal{D}_{m}(x,v)=\mathcal{D}_{m-1}(x,v), \,\,m\geq2.
\end{equation}
\end{lem}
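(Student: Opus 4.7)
The plan is to prove $\Delta\mathcal{D}_1=0$ directly and then verify the recursion $\Delta\mathcal{D}_m=\mathcal{D}_{m-1}$ by splitting into the three subcases corresponding to how the formula (5.7)/(5.8) is written. In each case I apply the identities (5.2)--(5.4) (with $x$ replaced by $x-v$, which does not affect the Laplacian) and match the resulting coefficients against those in the definition of $\mathcal{D}_{m-1}$.

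First, $\Delta\mathcal{D}_1=0$ follows immediately from (5.2) with $s=-(n-1)$, since $\delta_{-(n-1)}=(-(n-1))\cdot 0=0$; this is just the classical statement that $|x-v|^{-(n-1)}$ is (a constant multiple of) the Newtonian fundamental solution. For the recursion in the purely polynomial cases ($n$ even, any $m\geq 2$; or $n$ odd with $2\leq m\leq \tfrac{n-1}{2}$), I apply (5.2) with $s=2m-(n+1)$. Using the definition $\gamma_k=\delta_{2k-n+1}$, one checks $\delta_{2m-(n+1)}=\gamma_{m-1}$, so that
\[
\Delta\mathcal{D}_m
=\frac{\mathcal{C}_n}{\gamma_1\cdots\gamma_{m-1}}\,\gamma_{m-1}\,|x-v|^{2(m-1)-(n+1)}
=\mathcal{D}_{m-1},
\]
and the product in the denominator of $\mathcal{D}_m$ telescopes to the one in $\mathcal{D}_{m-1}$.

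The genuinely new computation is the logarithmic case, $n$ odd and $m\geq\tfrac{n+1}{2}$. Here I treat the boundary value $m=\tfrac{n+1}{2}$ first: in this case the exponent $2m-(n+1)=0$, the outer sum over $t$ is empty, and the product $\delta_2\cdots\delta_{2m-n-1}$ is empty (equal to $1$), so $\mathcal{D}_m$ reduces to a constant multiple of $\log|x-v|+\tfrac{1}{n+1}$; applying (5.4) directly gives $\mathcal{D}_{m-1}$ in polynomial form, with the denominators matching because $\tfrac{\mathcal{C}_n(n-1)}{(n-1)\gamma_1\cdots\gamma_{(n-1)/2-1}}=\tfrac{\mathcal{C}_n}{\gamma_1\cdots\gamma_{(n-1)/2-1}}$. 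For $m\geq\tfrac{n+3}{2}$ I apply (5.3) with $s=2m-(n+1)>0$. The $\log$-term yields $\delta_s|x-v|^{s-2}\log|x-v|$, and since $\delta_s=\delta_{2m-n-1}$ is precisely the factor by which the denominator of $\mathcal{D}_m$ exceeds that of $\mathcal{D}_{m-1}$, the logarithmic parts match.

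The bookkeeping hurdle, and really the only delicate step, is the non-logarithmic remainder in the log case. Applying (5.3) produces an extra term $(2s+n-1)|x-v|^{s-2}$; combined with the constant $\tfrac{1}{n+1}-\sum_{t=1}^{m-(n+1)/2}\!\bigl(\tfrac{1}{2t}+\tfrac{1}{2t+n-1}\bigr)$ carried along by $\mathcal{D}_m$, one needs to recover the corresponding constant for $\mathcal{D}_{m-1}$. The identity that makes this work is
\[
\frac{2s+n-1}{\delta_s}=\frac{1}{s}+\frac{1}{s+n-1}
=\frac{1}{2(m-\tfrac{n+1}{2})}+\frac{1}{2(m-\tfrac{n+1}{2})+n-1},
\]
which is exactly the last term of the sum indexed by $t=m-\tfrac{n+1}{2}$. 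Thus dividing through by $\delta_s$ turns the correction into a single telescoping summand, reducing the sum from $t=1,\dots,m-\tfrac{n+1}{2}$ down to $t=1,\dots,(m-1)-\tfrac{n+1}{2}$, which is precisely what appears in $\mathcal{D}_{m-1}$. This verifies the recursion in the log case and completes the proof.
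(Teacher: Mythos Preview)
Your argument is correct and is exactly the ``straightforward calculation'' the paper refers to: the paper's own proof simply cites identities (5.2)--(5.4) and leaves the case analysis implicit, while you have written it out explicitly (including the telescoping of the $\gamma_{m-1}=\delta_{2m-n-1}$ factor and the cancellation of the $t=m-\tfrac{n+1}{2}$ summand in the log case). There is nothing to add.
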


\begin{proof}
Using (5.2)-(5.4), it is immediate by a straightforward calculation.
\end{proof}

\begin{defn}
Let
\begin{equation}
\mathcal{K}_{m}(x,v)=
\mathcal{D}_{m}(x,v)-\mathrm{S. P.}
[\mathcal{D}_{m}](x,v)\,\,for \,\,x\neq v
\end{equation}
where
\begin{align}
\mathrm{S. P.}
[\mathcal{D}_{m}](x,v)=&\frac{c_{n}}{\gamma_{1}\gamma_{2}\cdots\gamma_{m-1}}\Big[\sum_{l=0}^{2m}P_{l}^{(\frac{n+1}{2}-m)}(x_{S^{n}}\cdot
v_{S^{n}})\\
&\times\min\left(\left|\frac{x}{v}\right|^{l}, \left|\frac{x}{v}\right|^{-l}\right)\times\max\left(|x|^{2m-n-1}, |v|^{2m-n-1}\right)\Big]\nonumber
\end{align}
for any $m$ and even $n$, or any odd $n$ with $m\leq\frac{n-1}{2};$
and
\begin{align}
\mathrm{S. P.} [\mathcal{D}_{m}](x,v)=&\frac{\mathcal{C}_{n}}{(n-1)\gamma_{1}\gamma_{2}\cdots\gamma_{\frac{n-1}{2}-1}\delta_{2}\delta_{4}\cdots\delta_{2m-n-1}}\\
&\times\Big\{\frac{1}{2}\Big[\sum_{l=0}^{2m}Q_{l}^{(\frac{n+1}{2}-m)}(x_{S^{n}}\cdot
v_{S^{n}})\times\min\left(\left|\frac{x}{v}\right|^{l}, \left|\frac{x}{v}\right|^{-l}\right)\nonumber\\
&\times\max\left(|x|^{2m-n-1}, |v|^{2m-n-1}\right)\Big]\nonumber\\
&+\left[\log\left(\max(|x|, |v|)\right)+\frac{1}{n+1}-\sum_{t=1}^{m-\frac{n+1}{2}}\left(\frac{1}{2t}+\frac{1}{2t+n-1}\right)\right]\nonumber\\
&\times\Big[\sum_{l=0}^{2m} P_{l}^{(\frac{n+1}{2}-m)}(x_{S^{n}}\cdot
v_{S^{n}})\times\min\left(\left|\frac{x}{v}\right|^{l}, \left|\frac{x}{v}\right|^{-l}\right)
\nonumber\\
&\times\max\left(|x|^{2m-n-1}, |v|^{2m-n-1}\right)\Big]\Big\}\nonumber
\end{align}
for any odd $n$ with $m\geq\frac{n+1}{2}$, where
$\delta_{s},\,\gamma_{s},\,\mathcal{C}_{n}$ are given as in (5.1) and Lemma
5.1, and the ultraspherical polynomials
$P_{l}^{(\frac{n+1}{2}-m)},\,Q_{l}^{(\frac{n+1}{2}-m)}$ are defined by
(2.16) and (2.17). Then $-\mathcal{K}_{m}(x,v)$ is said to be the
$m$th order polyharmonic fundamental solution.
\end{defn}

As Proposition 2.6, by the above definition, we have

\begin{prop}
\begin{equation}
\mathcal{K}_{m}(x,v)=\mathcal{K}_{m}(v,x)
\end{equation}
with $x\neq v$ for any $m\in \mathbb{N}$.
\end{prop}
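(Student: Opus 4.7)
The plan is to verify symmetry piece by piece, exactly as for Proposition 2.6, but noting that the situation here is structurally simpler because $\mathcal{D}_m$ and its singular part do not carry the antisymmetric factor $(x_j-v_j)$ that produced the minus sign in Proposition 2.6.

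First I would observe that $\mathcal{D}_m(x,v)$ depends on $(x,v)$ only through $|x-v|$ (see formulas (5.5), (5.7), (5.8)), so trivially $\mathcal{D}_m(x,v)=\mathcal{D}_m(v,x)$. Hence for $|x|=|v|$ the statement is immediate from the top branch of the definition (5.11).

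Next, for $|x|\neq|v|$, I would check that every building block of $\mathrm{S.P.}[\mathcal{D}_m](x,v)$ in (5.12)--(5.13) is invariant under the swap $x\leftrightarrow v$:
\begin{itemize}
\item the ultraspherical polynomial arguments $x_{S^n}\cdot v_{S^n}$ are symmetric since the inner product on $S^n$ is;
\item $\min\!\left(\left|x/v\right|^{l},\left|x/v\right|^{-l}\right)$ becomes $\min\!\left(\left|v/x\right|^{l},\left|v/x\right|^{-l}\right)$, which is the same set of two numbers;
\item $\max(|x|^{2m-n-1},|v|^{2m-n-1})$ and $\log(\max(|x|,|v|))$ are manifestly symmetric;
\item the overall constants $\mathcal{C}_n/(\gamma_1\cdots\gamma_{m-1})$ etc.\ do not depend on $x,v$ at all.
\end{itemize}
Combining these observations term by term in (5.12) for the even-$n$ (or $m\leq(n-1)/2$) case, and in (5.13) for the remaining odd-$n$ case, gives $\mathrm{S.P.}[\mathcal{D}_m](x,v)=\mathrm{S.P.}[\mathcal{D}_m](v,x)$.

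Combining the two symmetries, the bottom branch of (5.11) yields
\[
\mathcal{K}_m(x,v)=\mathcal{D}_m(x,v)-\mathrm{S.P.}[\mathcal{D}_m](x,v)=\mathcal{D}_m(v,x)-\mathrm{S.P.}[\mathcal{D}_m](v,x)=\mathcal{K}_m(v,x),
\]
which completes the proof. There is no real obstacle here: the only thing to be mildly careful about is the $\min/\max$ bookkeeping, which needs the symmetric $\min\{a,a^{-1}\}$ form used in the definition rather than a one-sided expansion; this is precisely why the authors wrote the singular part in the symmetric form (5.12)--(5.13) in the first place.
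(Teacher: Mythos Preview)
Your proof is correct and follows the same approach as the paper, which simply states that the symmetry is immediate from the definition (in parallel with Proposition 2.6). You have merely spelled out explicitly why each ingredient of $\mathcal{D}_m$ and $\mathrm{S.P.}[\mathcal{D}_m]$ is symmetric in $x$ and $v$, which is exactly the content implicit in the paper's one-line justification.
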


The following theorem exhibits a nice relation between the higher
order Poisson and conjugate Poisson kernels and the higher order
polyharmonic fundamental solutions.

\begin{thm}
Let $\mathcal{K}_{m}$ and $K_{m}^{(j)}$ be as above, then
\begin{equation}
\frac{\partial}{\partial x_{j}}\mathcal{K}_{m}(x,v)=K_{m}^{(j)}(x,v)
\end{equation}
and
\begin{equation}
\frac{\partial}{\partial v_{j}}\mathcal{K}_{m}(x,v)=K_{m}^{(j)}(x,v)
\end{equation}
for any $x,v\in \mathbb{R}^{n+1}\setminus\{x\neq v\}$ and $1\leq
j\leq n+1$.
\end{thm}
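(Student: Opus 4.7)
The plan is to split each kernel into its ``principal'' piece ($\mathcal{D}_m$ or $D_m^{(j)}$) and its singular part at infinity, and then verify the two identities
\[
\partial_{x_j}\mathcal{D}_m = D_m^{(j)},\qquad \partial_{x_j}\mathrm{S.P.}[\mathcal{D}_m] = \mathrm{S.P.}[D_m^{(j)}]
\]
separately. Subtracting gives $\partial_{x_j}\mathcal{K}_m = K_m^{(j)}$. A consistency check via induction — using Lemma 5.1 ($\Delta\mathcal{K}_m = \mathcal{K}_{m-1}$) and claim (3) of Theorem 3.5 ($\Delta K_m^{(j)} = K_{m-1}^{(j)}$) — would let me check the chain $\Delta(\partial_{x_j}\mathcal{K}_m - K_m^{(j)}) = \partial_{x_j}\mathcal{K}_{m-1} - K_{m-1}^{(j)} = 0$ by the induction hypothesis, reducing matters to a base case $m=1$; but since explicit formulas are available for every $m$, I prefer the direct route.

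For the principal piece, I would use the two elementary identities
\[
\partial_{x_j}|x-v|^{s} = s(x_j-v_j)|x-v|^{s-2},\qquad \partial_{x_j}(|x-v|^s\log|x-v|) = s(x_j-v_j)|x-v|^{s-2}\log|x-v| + (x_j-v_j)|x-v|^{s-2}.
\]
Applied to (5.5), (5.7), (5.8), the derivatives produce precisely the shape of $D_m^{(j)}$ from (2.9)--(2.11); the identity then reduces to the arithmetic check
\[
\frac{\mathcal{C}_n(2m-(n+1))}{\gamma_1\cdots\gamma_{m-1}} = \frac{c_n}{\beta_1\cdots\beta_{m-1}},
\]
which telescopes using $\gamma_k = \delta_{2k-n+1}$, $\beta_k = \alpha_{2k-n-1}$, $c_n = -1/\omega_n$ and $\mathcal{C}_n = 1/((n-1)\omega_n)$. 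The logarithmic case also needs the constant in the bracket to line up: the sum $\frac{1}{n+1}-\sum_{t=1}^{m-(n+1)/2}(\frac{1}{2t}+\frac{1}{2t+n-1})$ inside $\mathcal{D}_m$ must, after differentiation, reproduce the constant $-\sum_{t=1}^{m-(n+3)/2}(\frac{1}{2t}+\frac{1}{2t+n+1})$ inside $D_m^{(j)}$, which is again a direct telescoping check.

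For the singular parts, I would work from the series representations (2.18)--(2.19) with parameter $\lambda = \tfrac{n+1}{2} - m$ for $\mathcal{D}_m$. Since $\partial_{x_j}|x-v|^{-2\lambda}$ simply shifts the parameter to $\lambda+1 = \tfrac{n+3}{2} - m$ (with an extra factor $-2\lambda(x_j - v_j)$), term-by-term differentiation of (5.12)--(5.13) reproduces exactly the ultraspherical series defining $\mathrm{S.P.}[D_m^{(j)}]$ in (2.25)--(2.26). This is where the bookkeeping becomes heaviest — particularly in the logarithmic case, where the $Q_l^{(\lambda)}$ and $P_l^{(\lambda)}$ series contribute coupled terms — and it is the main obstacle: one has to check carefully that the $\max/\min$ truncations, the factors $(x_j-v_j)$, and the $\log(\max(|x|,|v|))$ piece all reassemble correctly after differentiation.

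The $\partial_{v_j}$ identity would then be obtained from the $\partial_{x_j}$ identity combined with the symmetry $\mathcal{K}_m(x,v)=\mathcal{K}_m(v,x)$ (Proposition 5.2) and the antisymmetry $K_m^{(j)}(x,v)=-K_m^{(j)}(v,x)$ (Proposition 2.6): differentiating $\mathcal{K}_m(v,x)$ in $v_j$ is the first-slot derivative evaluated at $(v,x)$, which by the first identity equals $K_m^{(j)}(v,x) = -K_m^{(j)}(x,v)$. This suggests that the correct sign on the right side of the second identity is a minus, and I would flag this apparent sign issue when writing out the proof rather than silently absorb it.
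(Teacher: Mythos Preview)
Your plan matches the paper's on the principal piece and on the use of symmetry: the paper also proves $\partial_{x_j}\mathcal{D}_m = D_m^{(j)}$ by the elementary differentiation identities you state, and reduces the constants via the telescoping relation $\delta_s = \tfrac{s}{s-2}\alpha_{s-2}$ (this is exactly (5.18), and Cases I--III in the paper are the three regimes you describe, including the logarithmic one).

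Where you diverge is the treatment of the singular parts. You propose to verify $\partial_{x_j}\mathrm{S.P.}[\mathcal{D}_m] = \mathrm{S.P.}[D_m^{(j)}]$ by direct term-by-term differentiation of the truncated ultraspherical expansions (5.12)--(5.13), and you correctly flag this as the heaviest part of the argument. The paper sidesteps this computation entirely: once $\partial_{x_j}\mathcal{D}_m = D_m^{(j)}$ is known, one has
\[
\partial_{x_j}\mathcal{K}_m - K_m^{(j)} \;=\; \mathrm{S.P.}[D_m^{(j)}] - \partial_{x_j}\mathrm{S.P.}[\mathcal{D}_m],
\]
and then observes that the left-hand side is an integrable part at infinity (order $|v|^{-(n+2)}$ or smaller) while the right-hand side is a singular part (a finite sum of terms of order $|v|^{-(n+1)}$ or larger). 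By the uniqueness of the decomposition in Definition 2.3, both sides must vanish identically. This buys you the singular-part identity for free and is considerably cleaner than the direct bookkeeping you outline; note also that your ``shift $\lambda \to \lambda+1$'' heuristic applies to the full series, not to the truncated $\mathrm{S.P.}$ sums, so the direct route is even messier than your sketch suggests.

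Your sign observation on (5.16) is correct: combining $\mathcal{K}_m(x,v)=\mathcal{K}_m(v,x)$, the already-proved $\partial_{x_j}\mathcal{K}_m = K_m^{(j)}$, and $K_m^{(j)}(v,x) = -K_m^{(j)}(x,v)$ yields $\partial_{v_j}\mathcal{K}_m(x,v) = -K_m^{(j)}(x,v)$. The paper invokes exactly this symmetry argument (first line of its proof), so the stated (5.16) is off by a sign; you are right to flag it.
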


\begin{proof}
By the symmetry in Proposition 5.3, it is enough to prove (5.15). To do so, at first, we claim that
\begin{equation}
\frac{\partial}{\partial x_{j}}\mathcal{D}_{m}(x,v)=D_{m}^{(j)}(x,v)
\end{equation}
for any $x,v\in \mathbb{R}^{n+1}\setminus\{x\neq v\}$ and $1\leq
j\leq n+1$.

Noting (2.5) and (5.1), we have
\begin{equation}
\delta_{s}=\frac{s}{s-2}\alpha_{s-2}
\end{equation}
for any odd $s$. To get (5.17), we consider the following three cases. \\

\noindent\textbf{Case I}: $m\geq 2$ with even $n$, or $m\leq
\frac{n-1}{2}$ with odd $n$.
\begin{align}
\frac{\partial}{\partial
x_{j}}\mathcal{D}_{m}(x,v)=&\frac{\partial}{\partial
x_{j}}\left[\frac{\mathcal{C}_{n}}{\gamma_{1}\gamma_{2}\cdots\gamma_{m-1}}|x-v|^{2m-(n+1)}\right]\\
=&\frac{(2m-n-1)\mathcal{C}_{n}}{\gamma_{1}\gamma_{2}\cdots\gamma_{m-1}}(x_{j}-v_{j})|x-v|^{2m-(n+3)}\nonumber\\
=&\frac{(2m-n-1)\mathcal{C}_{n}}{\delta_{2-(n-1)}\delta_{4-(n-1)}\cdots\delta_{2(m-1)-(n-1)}}(x_{j}-v_{j})|x-v|^{2m-(n+3)}\nonumber\\
=&\frac{(2m-n-1)\mathcal{C}_{n}}{\frac{2(m-1)-(n-1)}{-(n-1)}\alpha_{-(n-1)}\alpha_{2-(n-1)}\cdots\alpha_{2(m-2)-(n-1)}}(x_{j}-v_{j})\nonumber\\
&\times|x-v|^{2m-(n+3)}\nonumber\\
=&\frac{c_{n}}{\alpha_{2-(n+1)}\alpha_{4-(n+1)}\cdots\alpha_{2(m-1)-(n+1)}}(x_{j}-v_{j})|x-v|^{2m-(n+3)}\nonumber\\
=&\frac{c_{n}}{\beta_{1}\beta_{2}\cdots\beta_{m-1}}(x_{j}-v_{j})|x-v|^{2m-(n+3)}\nonumber\\
=&D_{m}^{(j)}(x,v)\nonumber
\end{align}
follows from (2.2), (2.12), (5.6), (5.9) and (5.18).\\

\noindent\textbf{Case II}: $m=\frac{n+1}{2}$ with odd $n$.
\begin{align}
\frac{\partial}{\partial
x_{j}}\mathcal{D}_{\frac{n+1}{2}}(x,v)=&\frac{\partial}{\partial
x_{j}}\left[\frac{\mathcal{C}_{n}}{(n-1)\gamma_{1}\gamma_{2}\cdots\gamma_{\frac{n-1}{2}-1}}\left(\log|x-v|+\frac{1}{n+1}\right)\right]\\
=&\frac{\mathcal{C}_{n}}{(n-1)\gamma_{1}\gamma_{2}\cdots\gamma_{\frac{n-1}{2}-1}}(x_{j}-v_{j})|x-v|^{-2}\nonumber\\
=&\frac{\mathcal{C}_{n}}{(n-1)\delta_{2-(n-1)}\delta_{4-(n-1)}\cdots\delta_{2(\frac{n-1}{2}-1)-(n-1)}}(x_{j}-v_{j})\nonumber\\
&\times|x-v|^{-2}\nonumber\\
=&\frac{\mathcal{C}_{n}}{(n-1)\left[\frac{2(\frac{n-1}{2}-1)-(n-1)}{-(n-1)}\alpha_{-(n-1)}\alpha_{2-(n-1)}\cdots\alpha_{2(\frac{n-1}{2}-2)-(n-1)}\right]}\nonumber\\
&\times(x_{j}-v_{j})|x-v|^{-2}\nonumber\\
=&\frac{\mathcal{C}_{n}}{\frac{1}{-(n-1)}\alpha_{2-(n+1)}\alpha_{4-(n+1)}\cdots\alpha_{2(\frac{n-1}{2}-1)-(n+1)}\alpha_{2(\frac{n+1}{2}-1)-(n+1)}}\nonumber\\
&\times(x_{j}-v_{j})|x-v|^{-2}\nonumber\\
=&\frac{c_{n}}{\beta_{1}\beta_{2}\cdots\beta_{\frac{n+1}{2}-1}}(x_{j}-v_{j})|x-v|^{-2}\nonumber\\
=&D_{\frac{n+1}{2}}^{(j)}(x,v)\nonumber
\end{align}
follows from (2.2), (2.12), (5.1), (5.6), (5.9) and (5.18).\\

\noindent\textbf{Case III}: $m\geq\frac{n+3}{2}$ with odd $n$.

\begin{align}
\frac{\partial}{\partial x_{j}}\mathcal{D}_{m}(x,v)=&\frac{\partial}{\partial x_{j}}\Big\{\frac{\mathcal{C}_{n}}
{(n-1)\gamma_{1}\gamma_{2}\cdots\gamma_{\frac{n-1}{2}-1}\delta_{2}\delta_{4}\cdots\delta_{2m-n-1}}|x-v|^{2m-(n+1)}\\
&\times\Big[\log|x-v|+\frac{1}{n+1}-\sum_{t=1}^{m-\frac{n+1}{2}}\Big(\frac{1}{2t}+\frac{1}{2t+n-1}\Big)\Big]\Big\}\nonumber\\
=&\frac{(2m-n-1)\mathcal{C}_{n}}{(n-1)\gamma_{1}\gamma_{2}\cdots\gamma_{\frac{n-1}{2}-1}\delta_{2}\delta_{4}\cdots\delta_{2m-n-1}}(x_{j}-v_{j})|x-v|^{2m-(n+3)}\nonumber\\
&\times\Big[\log|x-v|+\frac{1}{n+1}-\sum_{t=1}^{m-\frac{n+1}{2}}\Big(\frac{1}{2t}+\frac{1}{2t+n-1}\Big)\Big]\nonumber\\
&+\frac{\mathcal{C}_{n}}{(n-1)\gamma_{1}\gamma_{2}\cdots\gamma_{\frac{n-1}{2}-1}\delta_{2}\delta_{4}\cdots\delta_{2m-n-1}}(x_{j}-v_{j})|x-v|^{2m-(n+3)}\nonumber\\
=&\frac{(2m-n-1)\mathcal{C}_{n}}{(n-1)\delta_{2-(n-1)}\delta_{4-(n-1)}\cdots\delta_{2(\frac{n-1}{2}-1)-(n-1)}\delta_{2}\delta_{4}\cdots\delta_{2m-n-1}}\nonumber\\
&\times(x_{j}-v_{j})|x-v|^{2m-(n+3)}\Big[\log|x-v|+\frac{1}{n+1}\nonumber\\
&-\sum_{t=1}^{m-\frac{n+1}{2}}\Big(\frac{1}{2t}+\frac{1}{2t+n-1}\Big)\Big]\nonumber\\
&+\frac{\mathcal{C}_{n}}{(n-1)\delta_{2-(n-1)}\delta_{4-(n-1)}\cdots\delta_{2(\frac{n-1}{2}-1)-(n-1)}\delta_{2}\delta_{4}\cdots\delta_{2m-n-1}}\nonumber\\
&\times(x_{j}-v_{j})|x-v|^{2m-(n+3)}\nonumber\\
=&\frac{c_{n}}{(n+1)\beta_{1}\beta_{2}\cdots\beta_{\frac{n+1}{2}-1}\alpha_{2}\alpha_{4}\cdots\alpha_{2m-n-3}}(x_{j}-v_{j})|x-v|^{2m-(n+3)}\nonumber\\
&\times\Big[\log|x-v|+\frac{1}{n+1}-\sum_{t=1}^{m-\frac{n+1}{2}}\Big(\frac{1}{2t}+\frac{1}{2t+n-1}\Big)\Big]\nonumber\\
&+\frac{1}{2m-n-1}\frac{c_{n}}{(n+1)\beta_{1}\beta_{2}\cdots\beta_{\frac{n+1}{2}-1}\alpha_{2}\alpha_{4}\cdots\alpha_{2m-n-3}}\nonumber\\
&\times(x_{j}-v_{j})|x-v|^{2m-(n+3)}\nonumber\\
=&\frac{c_{n}}{(n+1)\beta_{1}\beta_{2}\cdots\beta_{\frac{n+1}{2}-1}\alpha_{2}\alpha_{4}\cdots\alpha_{2m-n-3}}(x_{j}-v_{j})|x-v|^{2m-(n+3)}\nonumber\\
&\times\left[\log|x-v|-\sum_{t=1}^{m-\frac{n+3}{2}}\left(\frac{1}{2t}+\frac{1}{2t+n+1}\right)\right]\nonumber\\
=&D_{m}^{(j)}(x,v)\nonumber
\end{align}
follows from (2.2), (2.12), (5.1), (5.6), (5.9) and (5.18), where the fourth equality is based on the following calculations (by repeatedly invoking (5.18)):
\begin{align}
&(n-1)\delta_{2-(n-1)}\delta_{4-(n-1)}\cdots\delta_{2(\frac{n-1}{2}-1)-(n-1)}\delta_{2}\delta_{4}\cdots\delta_{2m-n-1}
\end{align}
\begin{align}
=&2(n-1)(n+1)\prod_{k=0}^{\frac{n-1}{2}-2}\left[\frac{2(k+1)-(n-1)}{2k-(n-1)}\alpha_{2k-(n-1)}\right]\times\prod_{l=1}^{m-\frac{n+3}{2}}\left[\frac{2l+2}{2l}\alpha_{2l}\right]\nonumber\\
=&\frac{2m-n-1}{1-n}\left\{(n+1)\prod_{k=1}^{\frac{n-1}{2}-1}\alpha_{2k-(n+1)}\times[-2(n-1)]\times\prod_{l=1}^{m-\frac{n+3}{2}}\alpha_{2l}\right\}\nonumber\\
=&\frac{2m-n-1}{1-n}\left\{(n+1)\prod_{k=1}^{\frac{n-1}{2}-1}\alpha_{2k-(n+1)}\times[(-2)(-2+n+1)]\times\prod_{l=1}^{m-\frac{n+3}{2}}\alpha_{2l}\right\}\nonumber\\
=&\frac{2m-n-1}{1-n}\left\{(n+1)\prod_{k=1}^{\frac{n+1}{2}-1}\alpha_{2k-(n+1)}\times\prod_{l=1}^{m-\frac{n+3}{2}}\alpha_{2l}\right\}\nonumber\\
=&\frac{2m-n-1}{1-n}(n+1)\beta_{1}\beta_{2}\cdots\beta_{\frac{n+1}{2}-1}\alpha_{2}\alpha_{4}\cdots\alpha_{2m-n-3}\nonumber
\end{align}
in which $-2(n-1)=(-2)\left(-2+(n+1)\right)=\alpha_{2(\frac{n+1}{2}-1)-(n+1)}=\beta_{\frac{n+1}{2}-1}$ that has been already used in the fifth equality of (5.20).

By (5.17), we have
\begin{equation}
\frac{\partial}{\partial
x_{j}}\mathcal{K}_{m}(x,v)-K_{m}^{(j)}(x,v)=\mathrm{S. P.}
[D_{m}^{(j)}](x,v)-\frac{\partial}{\partial x_{j}}\mathrm{S. P.}
[\mathcal{D}_{m}](x,v)
\end{equation}
for any $x,v\in \mathbb{R}^{n+1}$ with $x\neq v$ and sufficiently
large $|v|$ (in fact, for any $|v|$).  By Definition 2.3, $\frac{\partial}{\partial
x_{j}}\mathcal{K}_{m}(x,v)-K_{m}^{(j)}(x,v)=\mathrm{S. P.}
[D_{m}^{(j)}](x,v)-\frac{\partial}{\partial x_{j}}\mathrm{S. P.}
[\mathcal{D}_{m}](x,v)=0$. Then (5.15) follows and the theorem is
completed.
\end{proof}

\begin{rem}
In the proofs of the above theorem and Theorem 3.5, we respectively obtain that
\begin{equation}
\mathrm{S. P.}
[D_{m}^{(j)}](x,v)=\frac{\partial}{\partial x_{j}}\mathrm{S. P.}
[\mathcal{D}_{m}](x,v)
\end{equation}
and
\begin{equation}
\mathrm{S. P.}
[D_{m-1}^{(j)}](x,v)=\Delta\mathrm{S. P.}
[D_{m}^{(j)}](x,v).
\end{equation}
From these identities, it is easy to find some identities on the ultraspherical polynomials $P_{l}^{(\lambda)}$ and $Q_{l}^{(\lambda)}$. However, we will not want to pursue these results in this article.
\end{rem}

\section{Polyharmonic Neumann problems in Lipschitz graph domains}

In this section, we will consider the polyharmonic Neumann problems
(1.2) in Lipschitz graph domains as follows
\begin{equation}\begin{cases}\Delta^{m}u=0,\,\,in\,\, D,\vspace{2mm}\\
  \frac{\partial}{\partial N}\Delta^{j}u=g_{j},\,\,on\,\,\partial
  D,
  \end{cases}
  \end{equation}
where $\nabla (u-\mathcal{M}_{1}\widetilde{g}_{0})\in L^{p}(D)$ with $\|\nabla (u-\mathcal{M}_{1}\widetilde{g}_{0})\|_{L^{p}(D)}\leq C\sum_{j=1}^{m-1}\|g_{j}\|_{L^{p}(\partial D, wd\sigma)}$, the Laplacian $\Delta=\sum_{k=1}^{n+1}\frac{\partial^{2}}{\partial
x_{k}^{2}}$, the gradient operator $\nabla=\left(\frac{\partial}{\partial x_{1}}, \frac{\partial}{\partial x_{2}},\ldots, \frac{\partial}{\partial x_{n+1}}\right)$,   $D$ is a Lipschitz graph domain stated as in Theorem 3.5, $g_{0}\in
L^{p}(\partial D)$, $g_{j}\in
L^{p}(\partial D, wd\sigma)$ for some suitable $p>1$, the $(p, 2m-2,\frac{3}{2})$ weight $w$ on $\partial D$ is given in Section 3.1,
$\frac{\partial}{\partial N}$ denotes the outward normal derivative, $\widetilde{g}_{0}$ ia related to all the boundary data $g_{j}$, $0\leq j<m$ and $m\in \mathbb{N}$.

\begin{defn}Let $D$ be a Lipschitz domain in $\mathbb{R}^{n+1}$ with the
boundary $\partial D$. Set
\begin{equation}
\mathcal{M}_{j}f(X)=\int_{\partial D} \mathcal{K}_{j}(X,Q)
f(Q)d\sigma(Q), \,\,X\in D,
\end{equation}
where $1\leq j< \infty$, $\mathcal{K}_{j}$ is the $j$th order
polyharmonic fundamental solution, $d\sigma$ is the surface measure
on $\partial D$, and $f\in L^{p}(\partial D)$ for some suitable $p$.
$\mathcal{M}_{j}f$ is called the $j$th-layer $\mathcal{S}$-potential
of $f$.
\end{defn}

\begin{rem} It is well known that $-\mathcal{K}_{1}$ is the fundamental
solution of the Laplacian and $\mathcal{M}_{1}$ is the classical
single layer potential.\end{rem}

By the properties of polyharmonic fundamental solutions, we have
\begin{thm}
Let $\{\mathcal{K}_{m}\}_{m=1}^{\infty}$ be the sequence of the
polyharmonic fundamental solutions, and $D$ be a Lipschitz graph
domain in $\mathbb{R}^{n+1}$ with Lipschitz graph boundary $\partial D$, which is the same as in Theorem 3.5, then
\begin{enumerate}
  \item For all $m\in \mathbb{N}$, $\mathcal{K}_{m}\in (C^{\infty}\times
        C)(D\times \partial D)$, the non-tangential
        boundary value \begin{equation*}
        \lim_{\substack {X\rightarrow P \\X\in \Gamma_{\gamma}(P),\, Q\in
        \partial D}}\mathcal{K}_{m}(X,Q)=\mathcal{K}_{m}(P,Q)
        \end{equation*}
        exists for all $P\in \partial D$ and $ P\ne Q\in
        \partial D$; $\mathcal{K}_{m}(\cdot,P)$ can be continuously extended to
        $\overline{D}\setminus\{P\}$ for any fixed $P\in
        \partial D$;
  \item For  $m\geq 2$,
        \begin{equation*}
        |\mathcal{K}_{m}(X,Q)|\leq M\frac{1}{\left(1+|Q|^{2}\right)^{\frac{n+1+\epsilon}{2}}}
        \end{equation*}
        for any $(X,Q)\in D_{c}\times\{Q\in\partial D: |Q|>T\}$, where $0<\epsilon<1$,
        $D_{c}$ is any compact subset of $\overline{D}$,
        $T$ is a sufficiently large positive real number and $M$ denotes
        some positive constant depending only on $\epsilon$, $D_{c}$ and $T$;
  \item $\Delta_{X} \mathcal{K}_{1}(X, Y)=\Delta_{Y} \mathcal{K}_{1}(X, Y)=0$ and $\Delta_{X} \mathcal{K}_{m}(X, Y)=\Delta_{Y} \mathcal{K}_{m}(X, Y)=\mathcal{K}_{m-1}(X, Y)$ for $m>1$, $X, Y\in \mathbb{R}^{n+1}\setminus\{0\}$ and $X\neq Y$, where $\Delta_{X}=\sum_{j=1}^{n+1}\frac{\partial}{\partial x_{j}}$ and $\Delta_{Y}=\sum_{j=1}^{n+1}\frac{\partial}{\partial y_{j}}$;
  \item The non-tangential limit \begin{equation}\lim_{\substack{X\rightarrow P\\ X\in \Gamma_{\gamma}(P)}}\left\langle\nabla\left(\int_{\partial D}\mathcal{K}_{1}(X, Q)f(Q)d\sigma(Q)\right),\,\, n_{P}\right\rangle
        =-\frac{1}{2}f(P)+T^{*}f(P),\end{equation}
        for any $f\in L^{p}(\partial D)$, $1\leq p<\infty$;
  \item The non-tangential limit \begin{equation}\lim_{\substack{X\rightarrow P\\ X\in \Gamma_{\gamma}(P)}}\left\langle\nabla\left(\int_{\partial D}\mathcal{K}_{m}(X, Q)f(Q)d\sigma(Q)\right),\,\, n_{P}\right\rangle
        =-\mathrm{K}^{*}_{m}f(P),\end{equation}
        for any $m\geq2$ and $f\in L^{p}(\partial D)$, $1\leq p\leq\infty$, where \begin{equation}
        \mathrm{K}^{*}_{m}f(P)=\int_{\partial D}\langle K_{m}(Q, P),
        n_{P}\rangle f(Q)d\sigma(Q)
        \end{equation}
        which is the adjoint operator of $\mathrm{K}_{m}$.
        \end{enumerate}
\end{thm}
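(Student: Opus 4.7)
The plan is to mirror the proof of Theorem 3.5, exploiting the bridging identity $\partial_{x_j}\mathcal{K}_m(x,v) = K_m^{(j)}(x,v)$ from Theorem 5.4 so that each claim about $\mathcal{K}_m$ is reduced either to a claim about the explicit formulas in Definition 5.2 or to an already-proved claim about the Poisson field $K_m$.

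For claims (1)--(3): inspection of Definition 5.2 shows $\mathcal{K}_m = \mathrm{I.P.}[\mathcal{D}_m]$ on $\{|x|\neq|v|\}$ and $\mathcal{K}_m = \mathcal{D}_m$ on $\{|x|=|v|\}$; together with the fact that $0\notin\overline{D}$ (forced by $\varphi(0)>0$), this gives $\mathcal{K}_m\in (C^\infty\times C)(D\times\partial D)$ and continuous extension to $\overline{D}\setminus\{P\}$, proving (1). The decay in (2) follows from a Taylor expansion in $r=|x|/|v|$, identical in form to (3.11)--(3.16) in the proof of Theorem 3.5, except that the absence of the factor $(x_j-v_j)$ yields the weaker bound $M(1+|Q|^2)^{-(n+1+\epsilon)/2}$ rather than $M|X-Q|(1+|Q|^2)^{-(n+2+\epsilon)/2}$. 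For (3), Lemma 5.1 gives $\Delta\mathcal{D}_m = \mathcal{D}_{m-1}$; writing $\mathrm{S.P.}[\mathcal{D}_m]$ as a finite sum $C_m\sum_l c_{m,l}(x,v)|v|^{2m-n-1-l}$ and applying $\Delta$ termwise, the polynomial-degree-matching argument used in Theorem 3.5 (3) forces $\Delta\,\mathrm{S.P.}[\mathcal{D}_m] = \mathrm{S.P.}[\mathcal{D}_{m-1}]$, so $\Delta\mathcal{K}_m = \mathcal{K}_{m-1}$ for $m\geq 2$ and $\Delta\mathcal{K}_1 = 0$. The symmetry from Proposition 5.3 transfers this to $\Delta_Y$.

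Claim (4) is the classical normal-derivative jump of the single-layer potential. One first checks directly from Definition 5.2 that $\mathrm{S.P.}[\mathcal{D}_1]\equiv 0$, so $\mathcal{M}_1 f(X) = \int_{\partial D}\mathcal{C}_n|X-Q|^{-(n-1)}f(Q)\,d\sigma(Q)$ is the classical single-layer potential for $-\Delta$ on the Lipschitz domain $D$. The jump formula $\langle \nabla \mathcal{M}_1 f, n_P\rangle \to -\tfrac{1}{2}f(P) + T^*f(P)$ under non-tangential approach is then the standard Lipschitz-domain theorem of Fabes--Jodeit--Rivi\`ere and Verchota already invoked in Lemma 3.4 (see \cite{f1,v0}); the minus sign arises because $X$ approaches $P$ from inside $D$ while $n_P$ points outward.

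The main work lies in claim (5) for $m\geq 2$, and the principal obstacle is justifying the differentiation under the integral sign and the subsequent passage to the non-tangential limit. By Theorem 5.4, $\partial_{x_j}\mathcal{K}_m = K_m^{(j)}$; by Corollary 4.2 (extended to Lipschitz graphic domains via Remark 4.4) and the decay estimates in (1)--(2) of this theorem combined with claim (2) of Theorem 3.5, one obtains simultaneous integrable majorants for $\mathcal{K}_m$ and $\nabla \mathcal{K}_m = K_m$, which permits the interchange
\[
\nabla \mathcal{M}_m f(X) = \int_{\partial D} K_m(X,Q)\,f(Q)\,d\sigma(Q), \quad X\in D.
\]
Pairing against the fixed vector $n_P$ and letting $X\to P$ non-tangentially, the splitting $\mathrm{I}+\mathrm{II}+\mathrm{III}$ and the dominated-convergence argument used for $M_m f$ in the proof of Theorem 3.5 (5) apply verbatim (the estimates there concern $|K_m^{(j)}|$ and are insensitive to whether one pairs against the moving unit normal $n_Q$ or the fixed vector $n_P$), producing the limit $\int_{\partial D}\langle K_m(P,Q), n_P\rangle f(Q)\,d\sigma(Q)$. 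Finally, the antisymmetry $K_m(P,Q) = -K_m(Q,P)$ from Proposition 2.6 together with the definition of $\mathrm{K}_m^*$ in (6.5) identifies this limit as $-\mathrm{K}_m^* f(P)$, completing the proof.
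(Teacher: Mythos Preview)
Your proposal is correct and follows essentially the same route as the paper, whose proof is the single line ``It is similar to Theorem 3.5 by invoking Lemma 5.1 and Theorem 5.4.'' You have simply unpacked that line: Lemma 5.1 for claim (3), the explicit formulas of Definition 5.2 and the Taylor-remainder argument of (3.11)--(3.16) for claims (1)--(2), the classical single-layer jump for (4), and Theorem 5.4 together with Proposition 2.6 to reduce (5) to the already-established Theorem 3.5(5). One cosmetic point: for the interchange $\nabla\mathcal{M}_m f=\int K_m\,f\,d\sigma$ you should cite Lemma 4.1 (first derivatives) rather than Corollary 4.2 (which handles $\Delta$), but the underlying majorant argument is identical.
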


\begin{proof}
It is similar to Theorem 3.5 by invoking Lemma 5.1 and Theorem 5.4.
\end{proof}

\begin{rem}
The operator $\mathrm{K}^{*}_{m}$ has the same boundedness properties as the operator $\mathrm{K}_{m}$ does. For instance, it is also bounded form $L^{p}(\partial D, wd\sigma)$ to $L^{p}(\partial D)$ for any $w\in W^{p,2m-2,\frac{1}{2}}(\partial D)$ and $1\leq p\leq\infty$. The details can be seen in the following Theorem 6.8 in Section 6.1.
\end{rem}

\begin{thm}
Let $\{\,\mathcal{K}_{m}\,\}_{m=1}^{\infty}$ be the sequence of the
polyharmonic fundamental solutions, and $E$ be a simply connected
unbounded domain in $\mathbb{R}^{n+1}$ with smooth boundless
boundary $\partial E$. Then for any $m>1$ and $f\in L^{p}(\partial
E)$, $p\geq1$,
\begin{equation}
\Delta\left(\int_{\partial E}\mathcal{K}_{m}(X, Q)
f(Q)d\sigma(Q)\right)=\int_{\partial E}\mathcal{K}_{m-1}(X,
Q)f(Q)d\sigma(Q),
\end{equation}
where $X\in \mathbb{R}^{n+1}\setminus \partial E$, namely,
\begin{equation}
\Delta \mathcal{M}_{m}f(X)=\mathcal{M}_{m-1}f(X),\,\, X\in
\mathbb{R}^{n+1}\setminus
\partial E.
\end{equation}
\end{thm}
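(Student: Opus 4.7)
The plan is to mirror the proof of Theorem 4.3 verbatim, with $\mathcal{K}_m$ in place of $\langle K_m(\cdot,\cdot), n_Q\rangle$ and the identity $\Delta \mathcal{K}_m = \mathcal{K}_{m-1}$ from Lemma 5.1 replacing $\Delta K_m = K_{m-1}$. Specifically, I would invoke Corollary 4.2 to commute the Laplacian with the integral over $\partial E$, then substitute the pointwise PDE. Once the differentiation under the integral sign is justified, the conclusion is immediate.

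The work is therefore concentrated in verifying the hypotheses of Corollary 4.2, namely that $\mathcal{K}_m(X,Q)$, $\partial_{x_\ell}\mathcal{K}_m(X,Q)$, and $\partial_{x_\ell}^{2}\mathcal{K}_m(X,Q)$ admit uniform decay estimates of the form $M \cdot g(Q)/(1+|Q|^2)^{n/2}$ with $g\in L^p(\partial E)$, as $(X,Q)$ ranges over $E_c \times \{|Q|>T\}$ for $E_c$ compact in $\mathbb{R}^{n+1}\setminus\partial E$ and $T$ large. For the zeroth-order bound I would quote part (2) of Theorem 6.3 directly. For the first-order bound I would use the key identity from Theorem 5.4, $\partial_{x_\ell}\mathcal{K}_m(X,Q) = K_m^{(\ell)}(X,Q)$, and then apply claim (2) of Theorem 3.5 (which, per Remark 6.5-style comments, transfers to the present domain $E$). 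For the second-order bound I would expand $\mathcal{K}_m(X,Q) = \mathrm{I.P.}[\mathcal{D}_m](X,Q)$ for $|X|<|Q|$ in the manner of equation (4.15), obtaining a series representation whose termwise second derivatives are bounded by $M_2(1+|Q|^2)^{-(n+1+\epsilon)/2}$; this computation is entirely parallel to the derivation of (4.16)--(4.18).

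With those three pointwise estimates in hand, Corollary 4.2 (applied coordinate-wise and summed over $\ell = 1,\dots,n+1$) gives
\begin{equation*}
\Delta\left(\int_{\partial E}\mathcal{K}_m(X,Q)f(Q)\,d\sigma(Q)\right) = \int_{\partial E}\Delta_X\mathcal{K}_m(X,Q)\,f(Q)\,d\sigma(Q)
\end{equation*}
for every $X \in \mathbb{R}^{n+1}\setminus\partial E$. By Lemma 5.1 the integrand on the right equals $\mathcal{K}_{m-1}(X,Q)f(Q)$ wherever $X\neq Q$, which holds throughout the domain of integration since $X\notin\partial E$. This yields (6.7), and hence (6.8) by the definition of $\mathcal{M}_m$.

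The main obstacle I foresee is bookkeeping for the second-derivative estimate in the three regimes of Definition 5.2 (even $n$; odd $n$ with $m\le (n-1)/2$; odd $n$ with $m \ge (n+1)/2$, where the logarithmic factor appears); the logarithmic case in particular requires that the extra $\log|Q|$ factor be absorbed by choosing $\epsilon$ strictly positive in the denominator $(1+|Q|^2)^{(n+1+\epsilon)/2}$, exactly as in the argument leading to (3.16). Beyond this, everything is routine and the paper itself indicates the proof is "similar to Theorem 4.3" via the remark preceding the statement.
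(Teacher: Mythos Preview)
Your proposal is correct and matches the paper's approach exactly: the paper's proof is the single sentence ``It is similar to Theorem 4.3 by using the analogues of Lemma 4.1, Corollary 4.2 and the claim (3) in the last theorem,'' and you have faithfully unpacked that. One minor citation slip: the identity $\Delta_X\mathcal{K}_m=\mathcal{K}_{m-1}$ is claim (3) of Theorem 6.3, not Lemma 5.1 (which only treats $\mathcal{D}_m$, before the singular part is subtracted); otherwise your detailed verification of the Corollary 4.2 hypotheses---including the use of Theorem 5.4 to reduce the first-derivative bound to the already-known estimate on $K_m^{(\ell)}$---is precisely what the paper intends.
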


\begin{proof}
It is similar to Theorem 4.3 by using the analogues of Lemma 4.1, Corollary 4.2 and the claim (3) in the last theorem.
\end{proof}

\begin{rem}
As Remark 4.4 stated, the above theorem also holds in the case of replacing the smooth domain $E$ by the Lipschitz graph domain $D$ given in Theorem 3.5.
\end{rem}

By the last two theorems, Lemma 3.4 and the results in the following Section 6.1, we
can solve the polyharmonic Neumann problems in Lipschitz domains as follows.
\begin{thm}
Let $\{\,\mathcal{K}_{m}\,\}_{m=1}^{\infty}$ be the sequence of the
polyharmonic fundamental solutions, and $D$ be a Lipschitz graph
domain in $\mathbb{R}^{n+1}$ with Lipschitz graph boundary $\partial
D$ as in Theorem 3.5, then for any $m>1$, there exists $\varepsilon=\varepsilon(D)>0$
such that the PHN problem (1.2) with the data $g_{0}\in
L^{p}(\partial D)$, $g_{j}\in
L^{p}(\partial D, wd\sigma)$ with $w\in \mathcal{W}^{p, 2m-2,\frac{3}{2}}(\partial D)$, $1\leq j<m$, $1<p<2+\varepsilon$, is
solvable and a solution is given by
\begin{align}
u(X)&=\sum_{j=1}^{m}\int_{\partial D}\mathcal{K}_{j}(X,
Q)\widetilde{g}_{j-1}(Q)d\sigma(Q),\\
&=\sum_{j=1}^{m}\mathcal{M}_{j}\widetilde{g}_{j-1}(X),\,\, X\in D,\nonumber
\end{align}
where
\begin{equation}\widetilde{g}_{m-1}=\left(-\frac{1}{2}I+T^{*}\right)^{-1}g_{m-1}\end{equation}
and
\begin{equation}\widetilde{g}_{l}=\left(-\frac{1}{2}I+T^{*}\right)^{-1}\left(g_{l}+\sum_{j=l+2}^{m}\mathrm{K}^{*}_{j-l}\widetilde{g}_{j-1}\right)\end{equation}
with $0\leq l\leq m-2$, which satisfying the following estimate
\begin{equation}
\|\nabla (u-\mathcal{M}_{1}\widetilde{g}_{0})\|_{L^{p}(D)}\leq C\sum_{j=1}^{m-1}\|g_{j}\|_{L^{p}(\partial D, wd\sigma)}.
\end{equation}
Under this estimate, the solution (6.8) with (6.9) and (6.10) is unique up to a constant.
Furthermore, if $w\in \mathcal{W}^{p, 2m-1,\frac{3}{2}}(\partial D)\,\big(\!\!\subset \mathcal{W}^{p, 2m-2,\frac{3}{2}}(\partial D)\big)$, then the above solution also satisfies the following estimate
\begin{equation}
\|u-\mathcal{M}_{1}\widetilde{g}_{0}\|_{L^{p}(D)}\leq C\sum_{j=1}^{m-1}\|g_{j}\|_{L^{p}(\partial D, wd\sigma)},
\end{equation}
and is unique under the last estimate.
\end{thm}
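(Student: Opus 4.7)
The plan is to mirror the argument of Theorem 4.5 for the polyharmonic Dirichlet problem, substituting the multi-layer $\mathcal{S}$-potentials $\mathcal{M}_j$ for the $\mathcal{D}$-potentials $M_j$, and replacing the Dirichlet trace identities of Theorem 3.5(4)--(5) by the Neumann trace identities of Theorem 6.3(4)--(5). First, posit the ansatz $u(X)=\sum_{j=1}^{m}\mathcal{M}_j\widetilde{g}_{j-1}(X)$ with densities $\widetilde{g}_{j-1}$ to be determined. By Theorem 6.4 (extended to the present Lipschitz graphic setting as in Remark 6.5), $\Delta\mathcal{M}_j=\mathcal{M}_{j-1}$ for $j\geq 2$ and $\Delta\mathcal{M}_1=0$, so
\begin{equation*}
\Delta^l u(X)=\sum_{j=l+1}^{m}\mathcal{M}_{j-l}\widetilde{g}_{j-1}(X),\quad 0\leq l\leq m-1,
\end{equation*}
and $\Delta^m u\equiv 0$ in $D$, which immediately gives polyharmonicity.

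Next, I would take the non-tangential limit of $\frac{\partial}{\partial N}\Delta^l u$ at $P\in\partial D$. The leading term (with $j=l+1$) contributes the jump $\bigl(-\tfrac{1}{2}I+T^*\bigr)\widetilde{g}_l$ by Theorem 6.3(4), while the remaining terms contribute $-\mathrm{K}^*_{j-l}\widetilde{g}_{j-1}$ by Theorem 6.3(5). Equating the resulting expression to $g_l$ yields the lower-triangular system
\begin{equation*}
g_l=\bigl(-\tfrac{1}{2}I+T^*\bigr)\widetilde{g}_l-\sum_{j=l+2}^{m}\mathrm{K}^*_{j-l}\widetilde{g}_{j-1},\quad 0\leq l\leq m-1.
\end{equation*}
Solving recursively from $l=m-1$ downward, using the invertibility of $-\tfrac{1}{2}I+T^*$ on $L^p(\partial D)$ for $1<p<2+\varepsilon$ furnished by Lemma 3.4, together with the $L^p$ boundedness of $\mathrm{K}^*_{j-l}$ from $L^p(\partial D,wd\sigma)$ to $L^p(\partial D)$ for the appropriate weights (this being the adjoint analogue of Theorem 3.11, promised by Remark 6.6 and Section 6.1), produces precisely the formulas (6.9) and (6.10); each $\widetilde{g}_l$ lies in $L^p(\partial D)$ with a controlled norm.

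For the $L^p(D)$ estimate, write $\nabla(u-\mathcal{M}_1\widetilde{g}_0)=\sum_{j=2}^{m}\nabla\mathcal{M}_j\widetilde{g}_{j-1}$ and apply the single-layer analogue of Theorem 3.12, namely $\|\nabla\mathcal{M}_j\widetilde{g}_{j-1}\|_{L^p(D)}\leq C\|\widetilde{g}_{j-1}\|_{L^p(\partial D,wd\sigma)}$ on $\mathcal{W}^{p,2j-2,3/2}(\partial D)$; the triangular bound on the $\widetilde{g}_l$ then telescopes to $C\sum_{j=1}^{m-1}\|g_j\|_{L^p(\partial D,wd\sigma)}$ via Proposition 3.7. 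The main obstacle is precisely establishing the single-layer mapping properties for $\nabla\mathcal{M}_j$ and $\mathrm{K}^*_m$ on these weighted spaces; however, Theorem 5.4 identifies $\partial_{x_j}\mathcal{K}_m=K_m^{(j)}$, which reduces the kernel estimates to those already carried out in the proof of Theorem 3.11, so the obstacle is genuinely technical rather than conceptual.

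Finally, uniqueness up to a constant under the stated estimate follows inductively: if $v$ is polyharmonic of order $m$ with all Neumann data vanishing and $\nabla v\in L^p(D)$, then $\Delta^{m-1}v$ is harmonic with vanishing normal derivative, hence a constant by the classical Neumann uniqueness of Dahlberg--Kenig; repeating this argument, $v$ itself must be constant. When $w\in\mathcal{W}^{p,2m-1,3/2}(\partial D)$, the stronger estimate $\|u-\mathcal{M}_1\widetilde{g}_0\|_{L^p(D)}\leq C\sum_{j=1}^{m-1}\|g_j\|_{L^p(\partial D,wd\sigma)}$ is obtained by applying the single-layer analogue of Theorem 3.12 directly (without gradient), which requires one extra power of $|Q|$ in the weight because $\mathcal{K}_m$ carries one more factor of $|X-Q|$ than $K_m^{(j)}=\partial_{x_j}\mathcal{K}_m$; this accounts for the shift from $2m-2$ to $2m-1$ in the weight class, and renders the solution unique (without the constant ambiguity) under that stronger estimate.
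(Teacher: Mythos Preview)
Your proposal is correct and follows essentially the same approach as the paper. The paper's own proof is a one-line cross-reference (``It is similar to Theorem 4.5 by Lemma 3.4, and Theorems 6.3, 6.8, 6.11 and 6.12 below''), and your write-up is precisely the explicit unfolding of that reference: the ansatz $u=\sum_{j=1}^m\mathcal{M}_j\widetilde{g}_{j-1}$, the Neumann trace identities from Theorem 6.3(4)--(5), recursive inversion of the triangular system via Lemma 3.4 and the boundedness of $\mathrm{K}^*_m$ (Theorem 6.8), and the $L^p(D)$ estimates for $\nabla\mathcal{M}_j$ and $\mathcal{M}_j$ (Theorems 6.12 and 6.11). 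Your numbering is occasionally off by one (the identity $\Delta\mathcal{M}_m=\mathcal{M}_{m-1}$ is Theorem 6.5, the Lipschitz-graphic extension is Remark 6.6), but substantively everything matches.
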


\begin{proof}
It is similar to Theorem 4.5 by lemma 3.4, and Theorems 6.3, 6.8, 6.11 and 6.12 below.
\end{proof}

\subsection{$L^{p}$ boundedness properties of operators
$\mathrm{K}^{*}_{m}$ and multi-layer $\mathcal{S}$-potentials $\mathcal{M}_{j}$ and their gradients}

In this section, we study the $L^{p}$ boundedness properties of the operators $\mathrm{K}^{*}_{m}$ given in (6.5) and the multi-layer $\mathcal{S}$-potentials $\mathcal{M}_{j}$ defined by (6.2) and their gradients, which are very significant for the solving program to the PHN and PHR problems (i.e., (1.2) and (1.3)) in this paper. More precisely, we have

\begin{thm}
Let the Lipschitz domain $D$ and the operators $\mathrm{K}^{*}_{m}$, $m\geq 2$, be the same as before, $w\in \mathcal{W}^{p,2m-2,\frac{1}{2}}(\partial D)$, $1\leq p<\infty$, then
\begin{equation}\|\mathrm{K}^{*}_{m}f\|_{L^{p}(\partial D)}\leq
C\| f \|_{L^{p}(\partial D, wd\sigma)}\end{equation}
for any $f\in L^{p}(\partial D, wd\sigma)$, where $C$ is a constant depending only on $m, n, p$ and $d_{0}={\rm dist}(0, \partial D)$. That is, $\mathrm{K}^{*}_{m}$, $m\geq 2$, are bounded from $L^{p}(\partial D, wd\sigma)$ to $L^{p}(\partial D)$ for any $w\in \mathcal{W}^{p,2m-2,\frac{1}{2}}(\partial D)$ with $1\leq p<\infty$.
\end{thm}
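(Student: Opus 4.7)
The plan is to mirror the proof of Theorem 3.11 almost verbatim, with the key observation that the antisymmetry $K_{m}^{(j)}(Q,P)=-K_{m}^{(j)}(P,Q)$ from Proposition 2.6 reduces the $P$-variable kernel integral to exactly the one already estimated there. Explicitly, writing
\[
\mathrm{K}_{m}^{*}f(P)=\sum_{j=1}^{n+1}\int_{\partial D}K_{m}^{(j)}(Q,P)\,n_{P}^{(j)}f(Q)\,d\sigma(Q),
\]
I would first identify $L^{p}(\partial D)$ and $L^{p}(\partial D,wd\sigma)$ with their flat counterparts $L^{p}(\mathbb{R}^{n})$ and $L^{p}(\mathbb{R}^{n},w\,dx)$ via the Lipschitz graph parametrization, using (3.46), and note that $|n_{P}^{(j)}|\leq 1$ absorbs into the constant.

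Next I would apply Minkowski's integral inequality in the outer ($P$) variable, obtaining
\[
\|\mathrm{K}_{m}^{*}f\|_{L^{p}(\partial D)}\leq C\sum_{j=1}^{n+1}\int_{\mathbb{R}^{n}}\left[\int_{\mathbb{R}^{n}}|K_{m}^{(j)}(y,x)|^{p}dx\right]^{1/p}|f(y)|\,dy,
\]
where $x\leftrightarrow P$ and $y\leftrightarrow Q$. Proposition 2.6 gives $|K_{m}^{(j)}(y,x)|=|K_{m}^{(j)}(x,y)|$, so the inner $dx$-integral is precisely the quantity controlled in (3.48) of the proof of Theorem 3.11. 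Hence the same bound
\[
\int_{\mathbb{R}^{n}}|K_{m}^{(j)}(y,x)|^{p}\,dx\leq C(m,n,d_{0})\bigl[|y|^{2m-2}(1+|\log|y||)\bigr]^{p}|y|^{\frac{1}{2}-(p-1)n}
\]
is available with no new kernel analysis to perform.

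Finally, I would apply H\"older's inequality in the $y$-variable against the weight $w$, exactly as in the last chain of inequalities in the proof of Theorem 3.11, so that the remaining expression is the defining quantity of $\mathcal{W}^{p,2m-2,\frac{1}{2}}(\partial D)$ and is therefore finite by hypothesis. The constant depends only on $m,n,p,d_{0}$ as claimed. There is essentially no obstacle here: every kernel estimate has already been proven, and the only substantive point is the symmetry swap in the first and third arguments of $K_{m}^{(j)}$, which is furnished by Proposition 2.6. One could equivalently derive the theorem from Theorem 3.11 abstractly by duality (since $(L^{p}(\partial D,wd\sigma))^{*}$ is a weighted $L^{p'}$ space), but the direct kernel argument above keeps the dependence on the weight class transparent.
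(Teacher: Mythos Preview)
Your proposal is correct and matches the paper's approach, which simply states that the proof ``is similar to the argument of Theorem 3.11.'' Your use of Proposition 2.6 to identify $|K_{m}^{(j)}(y,x)|=|K_{m}^{(j)}(x,y)|$ is exactly the observation that makes the adaptation of (3.48) immediate, so no new kernel work is needed and the rest of the Minkowski--H\"older chain carries over verbatim.
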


\begin{proof}
It is similar to the argument of Theorem 3.11.
\end{proof}

\begin{thm}
Let the Lipschitz domain $D$ and operators $\mathcal{M}_{j}$, $j\geq 2$, be the same as before, $w\in \mathcal{W}^{p,2j-1,\frac{1}{2}}(\partial D)$, $1\leq p<\infty$, then
\begin{equation}\|\mathcal{M}_{j}f\|_{L^{p}(\partial D)}\leq
C\| f \|_{L^{p}(\partial D, wd\sigma)}\end{equation}
for any $f\in L^{p}(\partial D, wd\sigma)$, where $C$ is a constant depending only on $m, n, p$ and $d_{0}$. That is, $\mathcal{M}_{j}$, $j\geq 2$, are bounded from $L^{p}(\partial D, wd\sigma)$ to $L^{p}(\partial D)$ for any $w\in \mathcal{W}^{p,2j-1,\frac{1}{2}}(\partial D)$ with $1\leq p<\infty$.
\end{thm}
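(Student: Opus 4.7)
The plan is to mirror the argument of Theorem 3.11, substituting the scalar polyharmonic fundamental kernel $\mathcal{K}_j$ for the vector kernel $K_m^{(j)}$ and tracking the single extra factor of $|x-y|$ that $\mathcal{D}_j$ carries relative to $D_m^{(j)}$. First I would identify $L^{p}(\partial D)$ and $L^{p}(\partial D, w\,d\sigma)$ with $L^{p}(\mathbb{R}^{n})$ and $L^{p}(\mathbb{R}^{n}, w\,dx)$ via the Lipschitz graph parameterization and the norm comparability (3.46). Minkowski's integral inequality followed by H\"older's inequality against $w$ then reduces the theorem to the uniform kernel estimate
\[
\int_{\mathbb{R}^{n}}\bigl|\mathcal{K}_{j}(x,y)\bigr|^{p}\,dx
\,\leq\,
C(j,n,p,d_{0})\,\left[|y|^{2j-1}\bigl(1+\big|\log|y|\big|\bigr)\right]^{p}|y|^{\,1/2-(p-1)n},
\]
whose right-hand side is exactly what the definition of a $(p,2j-1,\tfrac{1}{2})$-weight is designed to absorb when combined with H\"older's inequality.

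To establish this kernel estimate I would split the integration region, as in (3.49), into a near part $\{|x|\le 2|y|\}$ and a far part $\{|x|>2|y|\}$. On the near part, the triangle inequality gives $|\mathcal{K}_{j}|\le|\mathcal{D}_{j}|+|\mathrm{S.P.}[\mathcal{D}_{j}]|$; the size bound $|\mathcal{D}_{j}(x,y)|\le C|x-y|^{2j-(n+1)}\bigl(1+\big|\log|x-y|\big|\bigr)$ follows directly from (5.7)--(5.8), while the analogous control on $\mathrm{S.P.}[\mathcal{D}_{j}]$ comes from the explicit formulas (5.12)--(5.13). The three resulting radial integrals are handled by (3.36) of Lemma 3.10 exactly as in (3.52), (3.54) and (3.56). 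On the far part, the Taylor expansion underlying (3.16) applied to $\mathcal{D}_{j}$ in place of $D_{m}^{(j)}$ would yield
\[
\bigl|\mathrm{I.P.}[\mathcal{D}_{j}](x,y)\bigr|
\,\leq\,
C\,|y|^{2j}\,\frac{1+\big|\log|x|\big|}{|x|^{n+2}},
\]
after which (3.37) of Lemma 3.10 closes the estimate. The $L^{p}(\partial D)$ norm on the left-hand side of the theorem should be interpreted via the non-tangential trace of $\mathcal{M}_{j}f$, whose existence on $\partial D$ is guaranteed by claim (1) of Theorem 6.3.

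The main obstacle, and the only substantive departure from Theorem 3.11, will be the bookkeeping of the extra power of $|x-y|$ and of $|y|$: because $\mathcal{D}_{j}$ lacks the coordinate factor $(x_{j}-v_{j})$ present in $D_{m}^{(j)}$, each near-region subestimate picks up one additional power of $|x-y|$ and each far-region subestimate one additional power of $|y|$, and this is precisely what converts the weight exponent $2m-2$ appearing in Theorem 3.11 into the exponent $2j-1$ required here. I expect the logarithmic terms coming from the odd-$n$ branch of (5.8) and from (5.13) to be absorbed by the $1+\big|\log|y|\big|$ factor built into the class $\mathcal{W}^{p,2j-1,1/2}(\partial D)$, and the uniform lower bound $|y|\ge d_{0}$ to allow the final replacement of $|y|^{-(p-1)n}$ by $|y|^{1/2-(p-1)n}$ at the cost of a constant depending only on $d_{0}$.
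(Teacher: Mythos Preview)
Your proposal is correct and is precisely the paper's approach, whose entire proof reads ``It is similar to Theorem 3.11.'' One small bookkeeping correction: because $\mathrm{S.P.}[\mathcal{D}_{j}]$ in (5.12)--(5.13) runs through $l=2j$ (one index further than for $D_{m}^{(j)}$), the far-region Taylor remainder is actually $|\mathrm{I.P.}[\mathcal{D}_{j}](x,y)|\le C\,|y|^{2j+1}(1+|\log|x||)\,|x|^{-(n+2)}$ rather than $|y|^{2j}$, and it is this exponent that, after the chain (3.58) and (3.37), lands exactly on the target power $2j-1$.
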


\begin{proof}
It is similar to Theorem 3.11.
\end{proof}

\begin{thm}
Let the Lipschitz domain $D$ and operators $\mathcal{M}_{j}$, $j\geq 2$, be the same as before, $w\in \mathcal{W}^{p,2j-2,\frac{1}{2}}(\partial D)$, $1\leq p<\infty$, then
\begin{equation}\|\nabla\mathcal{M}_{j}f\|_{L^{p}(\partial D)}\leq
C\| f \|_{L^{p}(\partial D, wd\sigma)}\end{equation}
for any $f\in L^{p}(\partial D, wd\sigma)$, where $C$ is a constant depending only on $m, n, p$ and $d_{0}$. That is, $\nabla\mathcal{M}_{j}$, $j\geq 2$, are bounded from $L^{p}(\partial D, wd\sigma)$ to $L^{p}(\partial D)$ for any $w\in \mathcal{W}^{p,2j-2,\frac{1}{2}}(\partial D)$ with $1\leq p<\infty$.
\end{thm}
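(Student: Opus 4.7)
The plan is to reduce Theorem 6.12 to an application of Theorem 3.11 via the identification of $\nabla\mathcal{K}_{j}$ with the Poisson field $K_{j}$. The key bridge is Theorem 5.4, which gives $\partial_{x_{\ell}}\mathcal{K}_{j}(X,Q)=K_{j}^{(\ell)}(X,Q)$ for every $1\le\ell\le n+1$. Combined with Remark 4.4 (the analogue of Lemma 4.1 for Lipschitz graphic domains), this permits differentiation under the integral sign in $\mathcal{M}_{j}f$:
\begin{align*}
\partial_{x_{\ell}}\mathcal{M}_{j}f(X)=\int_{\partial D}K_{j}^{(\ell)}(X,Q)f(Q)\,d\sigma(Q),\qquad X\in D.
\end{align*}
The interchange is justified because, for $j\ge 2$, the decay bound of Theorem 3.5 claim (2) furnishes a uniform integrable dominant for $K_{j}^{(\ell)}$ on any compact subset of $D$ (and similarly for the second derivatives, through Theorem 5.4 applied once more).

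Next I would appeal to the analogue of Theorem 3.5 claim (5) applied to each component: the factor $n_{Q}$ in that claim plays no role in the limiting argument there, since the argument relies only on the kernel bounds from claims (1)–(2) and the splitting into near, intermediate and far contributions. Because $j\ge 2$, this yields the non-tangential limit
\begin{align*}
\lim_{\substack{X\to P\\ X\in\Gamma(P)}}\partial_{x_{\ell}}\mathcal{M}_{j}f(X)=\int_{\partial D}K_{j}^{(\ell)}(P,Q)f(Q)\,d\sigma(Q),\qquad P\in\partial D,
\end{align*}
with no jump term. Hence bounding $\|\nabla\mathcal{M}_{j}f\|_{L^{p}(\partial D)}$ reduces to bounding, for each $\ell$, the $L^{p}(\partial D)$ norm of the operator on the right.

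Finally, I would run the Minkowski--H\"older estimate of Theorem 3.11 essentially verbatim. After identifying $L^{p}(\partial D)$ with $L^{p}\!\left(\mathbb{R}^{n},\sqrt{1+|\nabla\varphi|^{2}}\,dx\right)$ and pulling through Minkowski for integrals and H\"older, everything reduces to the pointwise kernel estimate
\begin{align*}
\int_{\mathbb{R}^{n}}|K_{j}^{(\ell)}(x,y)|^{p}\,dx\le C\bigl[|y|^{2j-2}(1+|\log|y||)\bigr]^{p}|y|^{\frac{1}{2}-(p-1)n},
\end{align*}
which is exactly the estimate (3.48) already proved in Theorem 3.11, with $m$ there replaced by $j$ here. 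The weight class $\mathcal{W}^{p,2j-2,\frac{1}{2}}(\partial D)$ is precisely what makes the resulting $(p-1)$-dual integral finite, and summing the contributions over $\ell=1,\dots,n+1$ yields the desired bound, with $C$ depending only on $j,n,p,d_{0}$.

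The main obstacle, such as it is, is bookkeeping rather than analysis: one must verify that $\nabla\mathcal{M}_{j}f$ really does admit a non-tangential trace on $\partial D$ that agrees with the integral operator whose norm we estimate, so that no extra boundary principal-value singularity enters. For $j\ge 2$ the kernels $K_{j}^{(\ell)}(X,Q)$ are continuous up to the boundary off the diagonal, and this is precisely the regime covered by Theorem 3.5 claim (5); so this step is a direct citation rather than a new computation, and the quantitative estimate is inherited from Theorem 3.11 unchanged.
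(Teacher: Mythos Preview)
Your approach is correct and coincides with the paper's own proof, which simply reads ``It is similar to the argument of Theorem 3.11 by using Theorem 5.4.'' You have supplied exactly the details that sentence summarizes: invoke Theorem 5.4 to rewrite $\partial_{x_\ell}\mathcal{K}_j=K_j^{(\ell)}$, differentiate under the integral (justified by the decay in Theorem 3.5(2) and the analogue of Lemma 4.1), take the non-tangential trace via the argument of Theorem 3.5(5) (no jump for $j\ge 2$), and then recycle the kernel estimate (3.48) and the Minkowski--H\"older chain from Theorem 3.11 componentwise. One cosmetic point: the statement you are proving is Theorem 6.10 in the paper's numbering (the $L^p(\partial D)$ bound with weight class $\mathcal{W}^{p,2j-2,1/2}$), not Theorem 6.12 (which is the $L^p(D)$ bound with weight class $\mathcal{W}^{p,2j-2,3/2}$); the argument you wrote matches 6.10.
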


\begin{proof}
It is similar to the argument of Theorem 3.11 by using Theorem 5.4.
\end{proof}

\begin{thm}
Let the Lipschitz domain $D$ and operators $\mathcal{M}_{j}$, $j\geq 2$, be the same as before, $w\in \mathcal{W}^{p,2j-1,\frac{3}{2}}(\partial D)$, $1\leq p<\infty$, then
\begin{equation}\|\mathcal{M}_{j}f\|_{L^{p}(D)}\leq
C\| f \|_{L^{p}(\partial D, wd\sigma)}\end{equation}
for any $f\in L^{p}(\partial D, wd\sigma)$, where $C$ is a constant depending only on $m, n, p$ and $d_{0}$. That is, $\mathcal{M}_{j}$, $j\geq 2$, are bounded from $L^{p}(\partial D, wd\sigma)$ to $L^{p}(D)$ for any $w\in \mathcal{W}^{p,2j-1,\frac{3}{2}}(\partial D)$ with $1\leq p<\infty$.
\end{thm}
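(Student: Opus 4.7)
The plan is to adapt the proof of Theorem 3.11 to the present setting, taking into account that (i) the target norm is now the $L^{p}(D)$-norm rather than the $L^{p}(\partial D)$-norm, and (ii) the kernel is the polyharmonic fundamental solution $\mathcal{K}_{j}$ rather than a component of the Poisson field. The shift from $\alpha=\frac{1}{2}$ (in Theorem 6.10) to $\alpha=\frac{3}{2}$ accounts for the extra dimension of integration over $D$, exactly as in the jump from Theorem 3.11 to Theorem 3.12, while the shift from $k=2j-2$ (in Theorem 6.11 for $\nabla\mathcal{M}_{j}$) to $k=2j-1$ accounts for the absence of a derivative factor $(x_{j}-v_{j})$ in $\mathcal{K}_{j}$ compared with $K_{j}^{(m)}$; in other words, $\mathcal{K}_{j}$ is one power of $|X-Q|$ less singular on the diagonal but one power of $|Q|$ larger at infinity.

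After using the identification (3.46) to replace $L^{p}(\partial D, w\,d\sigma)$ by a weighted $L^{p}$-space on $\mathbb{R}^{n}$, I would apply Minkowski's integral inequality and H\"older's inequality as in (3.47) to reduce the claim to the pointwise-in-$Q$ estimate
\[ \int_{D}|\mathcal{K}_{j}(X,Q)|^{p}\,dX\;\leq\;C(j,n,p,d_{0})\,\bigl[|Q|^{2j-1}(1+\big|\log|Q|\big|)\bigr]^{p}\,|Q|^{\frac{3}{2}-(p-1)n} \]
for every $Q\in\partial D$. The definition of $w\in\mathcal{W}^{p,2j-1,\frac{3}{2}}(\partial D)$ is designed precisely so that the subsequent H\"older factor is finite, yielding the asserted bound $\|\mathcal{M}_{j}f\|_{L^{p}(D)}\leq C\|f\|_{L^{p}(\partial D,w d\sigma)}$.

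To prove the inner estimate I would split $D=\{X\in D:|X|\leq 2|Q|\}\cup\{X\in D:|X|>2|Q|\}$ and further decompose, in the first region, $\mathcal{K}_{j}=\mathcal{D}_{j}-\mathrm{S.P.}[\mathcal{D}_{j}]$, mirroring (3.49)-(3.50). Near the diagonal, Lemma 5.1 yields $|\mathcal{D}_{j}(X,Q)|\leq C|X-Q|^{2j-n-1}(1+\big|\log|X-Q|\big|)$ (the log occurring only in odd dimensions with $j\geq\frac{n+1}{2}$), and integration over $\{|X-Q|\leq 3|Q|\}$ produces the claimed right-hand side as in (3.52). For $\mathrm{S.P.}[\mathcal{D}_{j}]$, the explicit formulas (5.12)-(5.13) give $|\mathrm{S.P.}[\mathcal{D}_{j}](X,Q)|\leq C\max(|X|,|Q|)^{2j-n-1}$ up to a logarithmic factor, and the contributions from $\{|X|<|Q|\}$ and $\{|Q|<|X|<2|Q|\}$ follow the template of (3.54) and (3.56). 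For the far region, the symmetry $\mathcal{K}_{j}(X,Q)=\mathcal{K}_{j}(Q,X)$ from Proposition 5.3 combined with the Taylor-expansion argument that produced (3.11)-(3.14), now applied to $\mathcal{D}_{j}$ rather than $D_{m}^{(j)}$, gives
\[ |\mathcal{K}_{j}(X,Q)|\;\leq\;C|Q|^{2j}\,\frac{1+\big|\log|X|\big|}{|X|^{n+1}}, \]
after which Lemma 3.10 and Remark 3.11 control the radial integral, with the $|Q|\geq d_{0}$ inequality absorbing any remaining slack between exponents, exactly as in (3.52), (3.54), (3.56), (3.58).

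The main obstacle is bookkeeping: keeping track of the logarithmic factors in the odd-dimensional case with $j\geq\frac{n+1}{2}$, where they appear simultaneously in $\mathcal{D}_{j}$, in $\mathrm{S.P.}[\mathcal{D}_{j}]$, and in $\mathrm{I.P.}[\mathcal{D}_{j}]$, and making sure the far-field radial integral $\int_{2|Q|}^{\infty}(1+|\log r|)^{p}r^{-(n+1)p+n}\,dr$ produces exactly the exponent $|Q|^{\frac{3}{2}-(p-1)n}$ up to the $|Q|\geq d_{0}$ step. Once these are verified, the structural argument is a direct analogue of the one spelled out in Theorem 3.11, so the author's remark that the proof is similar to that of Theorem 3.11 is fully justified.
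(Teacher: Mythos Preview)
Your approach is essentially the paper's own: its proof reads in full ``It is similar to Theorem 3.12,'' which in turn defers to Theorem 3.11 with $X\in D$ replacing $P\in\partial D$, and you have simply made that two-step deferral explicit with the correct kernel substitution $\mathcal{K}_j$ for $K_m^{(j)}$ and the correct parameter shifts $(k,\alpha)=(2j-1,\tfrac{3}{2})$.

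One exponent slip to fix. Your far-field bound
\[
|\mathcal{K}_{j}(X,Q)|\;\leq\;C\,|Q|^{2j}\,\frac{1+\bigl|\log|X|\bigr|}{|X|^{n+1}}
\]
is one power short in both numerator and denominator. In Definition 5.2 the singular part $\mathrm{S.P.}[\mathcal{D}_{j}]$ retains terms through $l=2j$ (not $2j-1$ as for $D_m^{(j)}$), so the integrable part begins at $l=2j+1$ and the Taylor argument gives, after the symmetry of Proposition 5.3,
\[
|\mathcal{K}_{j}(X,Q)|\;\leq\;C\,|Q|^{2j+1}\,\frac{1+\bigl|\log|X|\bigr|}{|X|^{n+2}}\qquad(|X|>2|Q|).
\]
This matters at $p=1$: with your stated exponent the radial integral over the $(n{+}1)$-dimensional domain is $\int_{2|Q|}^{\infty}(1+|\log r|)\,r^{-1}\,dr$, which diverges. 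With the corrected exponent it becomes $\int_{2|Q|}^{\infty}(1+|\log r|)^{p}\,r^{-(n+2)p+n}\,dr$, which at $p=1$ is precisely the $\int r^{-2}(1+|\log r|)\,dr$ form controlled by (3.37) of Lemma 3.9 (your ``Lemma 3.10''; the numbering is off by one). After this adjustment your bookkeeping goes through unchanged.
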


\begin{proof}
It is similar to Theorem 3.12.
\end{proof}

\begin{thm}
Let the Lipschitz domain $D$ and operators $\mathcal{M}_{j}$, $j\geq 2$, be the same as before, $w\in \mathcal{W}^{p,2j-2,\frac{3}{2}}(\partial D)$, $1\leq p<\infty$, then
\begin{equation}\|\nabla\mathcal{M}_{j}f\|_{L^{p}(D)}\leq
C\| f \|_{L^{p}(\partial D, wd\sigma)}\end{equation}
for any $f\in L^{p}(\partial D, wd\sigma)$, where $C$ is a constant depending only on $m, n, p$ and $d_{0}$. That is, $\nabla\mathcal{M}_{j}$, $j\geq 2$, are bounded from $L^{p}(\partial D, wd\sigma)$ to $L^{p}(D)$ for any $w\in \mathcal{W}^{p,2j-2,\frac{3}{2}}(\partial D)$ with $1\leq p<\infty$.
\end{thm}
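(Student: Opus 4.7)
The plan is to imitate the argument of Theorem 3.12 (which in turn parallels Theorem 3.11 with $X$ ranging over the solid domain $D$ rather than over $\partial D$), with the crucial new ingredient being Theorem 5.4, which identifies the gradient of the polyharmonic fundamental solution with the Poisson field. First I would justify that the gradient can be commuted with the integral defining $\mathcal{M}_j f$. By Theorem 5.4, $\nabla_X \mathcal{K}_j(X,Q) = K_j(X,Q)$, and the growth/regularity assertions in Theorem 6.3 (parts (1)-(2), which imitate parts (1)-(2) of Theorem 3.5) supply exactly the dominating bounds required by (the vector-valued version of) Lemma 4.1. Hence for $j\geq 2$ and $X\in D$,
$$\nabla \mathcal{M}_j f(X) = \int_{\partial D} K_j(X,Q)\, f(Q)\, d\sigma(Q),$$
a vector-valued integral whose components are precisely the $j$th order conjugate Poisson and Poisson kernels tested against $f$.

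Next I would apply the integral Minkowski inequality componentwise, writing
$$\|\nabla \mathcal{M}_j f\|_{L^p(D)} \leq C\sum_{l=1}^{n+1} \int_{\partial D} \left(\int_D |K_j^{(l)}(X,Q)|^p\, dX\right)^{1/p} |f(Q)|\, d\sigma(Q),$$
followed by H\"older's inequality against the weight $w$ (identifying $\partial D$ with $\mathbb{R}^n$ via the Lipschitz graph and $L^p(\partial D, w d\sigma)$ with $L^p(\mathbb{R}^n, w dx)$ as in Theorem 3.11). This reduces the theorem to the pointwise kernel bound
$$\int_D |K_j^{(l)}(X,Q)|^p\, dX \leq C(j,n,p,d_0)\,\bigl[|Q|^{2j-2}(1+|\log|Q||)\bigr]^p\, |Q|^{\,3/2 - (p-1)n},$$
for after this estimate the defining inequalities of a $(p,2j-2,3/2)$-weight make the resulting $L^{p/(p-1)}(\partial D)$ integral finite.

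The heart of the proof, and the main technical obstacle, is the above $L^p(D)$ estimate for $K_j^{(l)}(\cdot,Q)$. I would proceed exactly as in Theorem 3.11, splitting the $X$-integration into $|X|\leq 2|Q|$ and $|X|>2|Q|$, and, within the first region, further into $|X-Q|\lesssim |Q|$ (controlled by $|D_j^{(l)}|$ and the bound in (3.51)), $|X|<|Q|$, and $|Q|<|X|<2|Q|$ (the last two controlled by $\mathrm{S.P.}[D_j^{(l)}]$, bounds (3.53) and (3.55)); on the far region $|X|>2|Q|$ one uses the decay of $\mathrm{I.P.}[D_j^{(l)}]$ (bound (3.57)) together with Lemma 3.9 to handle the logarithmic tail. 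The only substantive change from Theorem 3.11 is that the integration is now over the $(n+1)$-dimensional region $D$ rather than the $n$-dimensional $\partial D$, which shifts the dimensional counting by one: volumes of balls gain a factor $|Q|$, and the radial tail integral in Lemma 3.9 gains an analogous factor. This accounts for the replacement of the exponent $1/2-(p-1)n$ appearing in (3.48) by $3/2-(p-1)n$, and explains why the relevant weight class is $\mathcal{W}^{p,2j-2,3/2}(\partial D)$ rather than $\mathcal{W}^{p,2j-2,1/2}(\partial D)$.

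Assembling the pointwise kernel bound with the H\"older step and the definition of $(p,2j-2,3/2)$-weight (whose condition (2) is tailored precisely to make the resulting double integral converge, with the constant depending only on $m$, $n$, $p$, and $d_0$) yields the claimed estimate. The case $p=1$ is handled in the same way, using condition (1) in the weight definition in place of H\"older.
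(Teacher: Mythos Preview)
Your proposal is correct and follows essentially the same approach as the paper: the paper's own proof is the one-line remark ``It is similar to the argument of Theorem 3.12 by invoking Theorem 5.4,'' and you have accurately unpacked what this entails --- using Theorem 5.4 to identify $\nabla_X\mathcal{K}_j$ with the Poisson field $K_j$, then rerunning the Minkowski/H\"older/kernel-splitting argument of Theorem 3.11 with the integration taken over the $(n+1)$-dimensional solid domain $D$ rather than the $n$-dimensional boundary, which is precisely the content of Theorem 3.12 and accounts for the shift from the weight class $\mathcal{W}^{p,2j-2,1/2}$ to $\mathcal{W}^{p,2j-2,3/2}$.
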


\begin{proof}
It is similar to the argument of Theorem 3.12 by invoking Theorem 5.4.
\end{proof}

\section{Regularity of polyharmonic Dirichlet problems in Lipschitz graph domains}

In this section, we will consider the polyharmonic regularity problems
(1.3) in Lipschitz domains as follows
\begin{equation}\begin{cases}\Delta^{m}u=0,\,\,in\,\, D,\vspace{2mm}\\
  \Delta^{j}u=h_{j},\,\,on\,\,\partial
  D,
  \end{cases}
  \end{equation}
where $\nabla (u-\mathcal{M}_{1}\widetilde{h}_{0})\in L^{p}(D)$ with $\|\nabla (u-\mathcal{M}_{1}\widetilde{h}_{0})\|_{L^{p}(D)}\leq C\sum_{j=1}^{m-1}\|h_{j}\|_{L_{1}^{p}(\partial D, wd\sigma)}$, the Laplacian $\Delta=\sum_{k=1}^{n+1}\frac{\partial^{2}}{\partial
x_{k}^{2}}$, the gradient operator $\nabla=\left(\frac{\partial}{\partial x_{1}}, \frac{\partial}{\partial x_{2}},\ldots, \frac{\partial}{\partial x_{n+1}}\right)$,   $D$ is a Lipschitz graph domain stated as Theorem 3.5, $h_{0}\in
L_{1}^{p}(\partial D)$, $h_{j}\in
L_{1}^{p}(\partial D, wd\sigma)$ for some suitable $p>1$, the $(p, 2m-1,\frac{3}{2})$ weight $w$ on $\partial D$ is given in Section 3.1, $\widetilde{h}_{0}$ is related to all the boundary date $h_{j}$, $0\leq j<m$ and $m\in \mathbb{N}$.

Once more, due to Dahlberg, Kenig and Verchota et al., we have

\begin{lem}[\cite{dk1,v0}]
There exists $\varepsilon=\varepsilon(D)>0$ such that
$\mathcal{M}_{1}$ is an invertible mapping from
$L^{p}(\partial D)$ onto $L_{1}^{p}(\partial D)$, $1<p<2+\varepsilon$, where $L_{1}^{p}(\partial D)=\{f\in L^{p}(\partial D): \nabla_{T}f \,\,exist \,\,a.\,e.\,\,on\,\,\partial D,\,\, and\,\, |\nabla_{T}f|\in L^{p}(\partial D)\}$ with the norm $\|f\|_{L_{1}^{p}(\partial D)}=\|f\|_{L^{p}(\partial D)}+\|\nabla_{T}f\|_{L^{p}(\partial D)}$ in which $\nabla_{T}$ is the tangential gradient.
\end{lem}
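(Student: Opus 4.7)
The plan is to establish the invertibility in three stages, following the Verchota / Dahlberg--Kenig strategy. First I would prove boundedness $\mathcal{M}_{1}: L^{p}(\partial D) \to L_{1}^{p}(\partial D)$. The estimate $\|\mathcal{M}_{1}f\|_{L^{p}(\partial D)} \lesssim \|f\|_{L^{p}(\partial D)}$ is classical, resting ultimately on the Coifman--McIntosh--Meyer theorem which realises the single layer as a bounded Calder\'on--Zygmund operator on $\partial D$. For the tangential gradient I would decompose $\nabla_{T}\mathcal{M}_{1}f = \nabla \mathcal{M}_{1}f - \langle \nabla \mathcal{M}_{1}f, n\rangle\, n$ and invoke Theorem 6.3(4), which identifies the non-tangential boundary value of $\langle \nabla \mathcal{M}_{1}f, n_{P}\rangle$ as $\bigl(-\tfrac{1}{2}I + T^{\ast}\bigr)f(P)$. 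The remaining tangential pieces are principal-value singular integrals of Cauchy/commutator type, and so are $L^{p}$-bounded by the same theorem.

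Second, I would establish the reverse inequality $\|f\|_{L^{p}(\partial D)} \lesssim \|\mathcal{M}_{1}f\|_{L_{1}^{p}(\partial D)}$, which gives injectivity together with a closed range. Setting $u = \mathcal{M}_{1}f$ harmonic in $D$, the Rellich identity applied with the constant vector field $e_{n+1}$ yields
\[
\int_{\partial D}\langle e_{n+1}, n\rangle\,|\nabla u|^{2}\, d\sigma \;=\; 2\int_{\partial D}\partial_{n+1}u\,\partial_{N}u\, d\sigma.
\]
Since $D$ is a Lipschitz graph domain, $|\langle e_{n+1}, n\rangle| \geq (1+L^{2})^{-1/2} > 0$, and Cauchy--Schwarz forces the equivalence $\|\nabla_{T}u\|_{L^{2}(\partial D)} \approx \|\partial_{N}u\|_{L^{2}(\partial D)}$. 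Combined with the $L^{2}$ invertibility of $-\tfrac{1}{2}I + T^{\ast}$ (Lemma 3.4) this gives the estimate at $p=2$. For $p$ in a neighbourhood $(2-\varepsilon, 2+\varepsilon)$, I would use the $L^{p}$ invertibility of $-\tfrac{1}{2}I + T^{\ast}$ (again Lemma 3.4) together with Dahlberg's good-$\lambda$ inequality comparing the non-tangential maximal function of $\nabla u$ to its area integral, which produces the $L^{p}$-analogue of the Rellich comparison throughout the stated range of $p$.

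Third, to upgrade closed range to surjectivity, I would argue by density. Smooth functions are dense in $L_{1}^{p}(\partial D)$; for smooth boundary data $g$ the classical Dirichlet regularity problem on $D$ is solvable (by approximating $D$ by smooth domains and passing to the limit), producing a harmonic $u$ with $u|_{\partial D} = g$ and $\partial_{N}u \in L^{p}(\partial D)$. Then the preimage $f := \bigl(-\tfrac{1}{2}I + T^{\ast}\bigr)^{-1}(\partial_{N}u) \in L^{p}(\partial D)$ satisfies $\mathcal{M}_{1}f = u = g$ on $\partial D$ by uniqueness for the interior Dirichlet problem applied to $u - \mathcal{M}_{1}f$. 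Hence the range is dense and, being closed, equals all of $L_{1}^{p}(\partial D)$.

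The principal obstacle is the Rellich step together with its extension from $p=2$ to neighbouring exponents. At $p=2$ the identity is an elementary integration by parts, but moving away from $p=2$ requires either real-variable perturbation from the $L^{2}$ estimate or direct good-$\lambda$ inequalities tailored to the Lipschitz geometry of $\partial D$; it is precisely here that the exponent $\varepsilon(D)>0$ is extracted and why the result is specific to $p$ in a one-sided neighbourhood of $2$ rather than arbitrary $p \in (1,\infty)$.
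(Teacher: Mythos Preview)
The paper does not prove this lemma at all; it is quoted verbatim from Verchota \cite{v0} and Dahlberg--Kenig \cite{dk1} and used as a black box in the proof of Theorem~7.2. So there is no ``paper's own proof'' to compare against; your outline is effectively a sketch of the argument in those references.

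As such a sketch, your first two stages are accurate: boundedness via Coifman--McIntosh--Meyer, and the $p=2$ lower bound via the Rellich identity together with Lemma~3.4, are exactly Verchota's argument. However, there is a real gap in your treatment of the range of $p$. You explicitly confine the second stage to ``a neighbourhood $(2-\varepsilon,2+\varepsilon)$'' and in your closing paragraph describe the result as living in ``a one-sided neighbourhood of $2$''. But the lemma asserts invertibility on the full interval $1<p<2+\varepsilon$, and getting from $p$ near $2$ down to all $p>1$ is the substantive contribution of Dahlberg--Kenig \cite{dk1}: it requires passing through the atomic Hardy space $H^{1}_{\mathrm{at}}(\partial D)$, proving the endpoint estimate $\|\nabla\mathcal{M}_{1}f\|_{H^{1}}\lesssim\|f\|_{H^{1}}$ on atoms, and then interpolating with the $L^{2}$ result. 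A good-$\lambda$ inequality alone, as you invoke it, does not bridge this gap. Your surjectivity step is also slightly circular as written: solving the regularity problem with $\partial_{N}u\in L^{p}$ for smooth data and then inverting $-\tfrac12 I+T^{\ast}$ presupposes control of $\partial_{N}u$ in $L^{p}$, which is essentially what you are trying to prove; in the actual argument surjectivity is obtained by combining the closed-range estimate with a duality/index argument or by the continuity method from smooth approximating domains, not by solving the regularity problem first.
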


\begin{thm}
Let $\{\,\mathcal{K}_{m}\,\}_{m=1}^{\infty}$ be the sequence of the
polyharmonic fundamental solutions, and $D$ be a Lipschitz graph
domain in $\mathbb{R}^{n+1}$ with Lipschitz graph boundary $\partial
D$ as in Theorem 3.5, then for any $m>1$, there exists $\varepsilon=\varepsilon(D)>0$
such that the PHR problem (1.3) with the data $h_{0}\in
L_{1}^{p}(\partial D)$, $h_{j}\in
L_{1}^{p}(\partial D, wd\sigma)$ with $w\in \mathcal{W}^{p,2m-1,\frac{3}{2}}(\partial D)$, $1\leq j<m$, $1<p<2+\varepsilon$, is
solvable and a solution is given by
\begin{align}
u(X)&=\sum_{j=1}^{m}\int_{\partial D}\mathcal{K}_{j}(X,
Q)\widetilde{h}_{j-1}(Q)d\sigma(Q),\\
&=\sum_{j=1}^{m}\mathcal{M}_{j}\widetilde{h}_{j-1}(X),\,\, X\in D,\nonumber
\end{align}
where
\begin{equation}\widetilde{h}_{m-1}=\mathcal{M}_{1}^{-1}h_{m-1}\end{equation}
and
\begin{equation}\widetilde{h}_{l}=\mathcal{M}_{1}^{-1}\left(h_{l}-\sum_{j=l+2}^{m}\mathcal{M}_{j-l}\widetilde{h}_{j-1}\right)\end{equation}
with $0\leq l\leq m-2$, which satisfying the following estimate
\begin{equation}
\|\nabla (u-\mathcal{M}_{1}\widetilde{h}_{0})\|_{L^{p}(D)}\leq C\sum_{j=1}^{m-1}\|h_{j}\|_{L_{1}^{p}(\partial D, wd\sigma)}.
\end{equation}
Under this estimate, the solution (7.2) with (7.3) and (7.4) is unique up to a constant.
Furthermore, the above solution also satisfies the following estimate
\begin{equation}
\|u-\mathcal{M}_{1}\widetilde{h}_{0}\|_{L^{p}(D)}\leq C\sum_{j=1}^{m-1}\|h_{j}\|_{L_{1}^{p}(\partial D, wd\sigma)}.
\end{equation}
and is unique under the last estimate.

\begin{proof}
It is similar to Theorem 4.5 by using Lemma 7.1, Theorems 6.9-6.12 and 7.3 below.
\end{proof}
\end{thm}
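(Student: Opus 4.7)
My plan is to mirror the arguments of Theorem 4.5 and Theorem 6.3, but now built from the multi-layer $\mathcal{S}$-potentials $\mathcal{M}_j$ and calibrated through Lemma 7.1 rather than through $\pm\frac{1}{2}I\pm T$. Specifically, I start from the ansatz
\begin{equation*}
u(X)=\sum_{j=1}^{m}\mathcal{M}_{j}\widetilde{h}_{j-1}(X),\qquad X\in D,
\end{equation*}
with $\widetilde{h}_{0},\ldots,\widetilde{h}_{m-1}$ auxiliary densities to be determined. Applying $\Delta^{l}$ for $0\le l\le m$ and invoking the analog of Theorem 6.4 for the Lipschitz graphic domain $D$ (valid by Remark 6.6), I obtain
\begin{equation*}
\Delta^{l}u(X)=\sum_{j=l+1}^{m}\mathcal{M}_{j-l}\widetilde{h}_{j-1}(X),\qquad 0\le l\le m-1,
\end{equation*}
and $\Delta^{m}u\equiv 0$ in $D$. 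This takes care of the interior polyharmonic equation and of the Laplacian hierarchy.

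Next I let $X\to P\in\partial D$ non-tangentially. Because $\mathcal{K}_{j}(\cdot,P)$ admits a continuous extension to $\overline{D}\setminus\{P\}$ (claim (1) of Theorem 6.3), and because the classical single-layer trace is just $\mathcal{M}_{1}$ itself, comparing with the boundary data $\Delta^{l}u=h_{l}$ yields the triangular system
\begin{equation*}
h_{l}(P)=\mathcal{M}_{1}\widetilde{h}_{l}(P)+\sum_{j=l+2}^{m}\mathcal{M}_{j-l}\widetilde{h}_{j-1}(P),\qquad 0\le l\le m-1,
\end{equation*}
on $\partial D$. Lemma 7.1 gives the existence of $\varepsilon=\varepsilon(D)>0$ such that $\mathcal{M}_{1}\colon L^{p}(\partial D)\to L^{p}_{1}(\partial D)$ is invertible for $1<p<2+\varepsilon$, and I solve the system recursively from the bottom up, producing exactly the formulas (7.3)--(7.4). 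To know the recursion is legitimate I must verify that each right-hand side lands in $L^{p}_{1}(\partial D)$ so that $\mathcal{M}_{1}^{-1}$ can act on it: $h_{l}$ is given in $L^{p}_{1}(\partial D,w\,d\sigma)\subset L^{p}_{1}(\partial D)$ (using Remark 3.10 with the hypothesis $w\in\mathcal{W}^{p,2m-1,\frac{3}{2}}(\partial D)$ together with Proposition 3.7 to downgrade the weights for smaller $j$), while each $\mathcal{M}_{j-l}\widetilde{h}_{j-1}$ lies in $L^{p}_{1}(\partial D)$ by combining the $L^{p}(\partial D)$ boundary bound of Theorem 6.9 with the tangential gradient bound obtained from Theorem 6.10. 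An induction on $l$ descending from $m-1$ to $0$ then shows $\widetilde{h}_{l}\in L^{p}(\partial D)$ with
\begin{equation*}
\|\widetilde{h}_{l}\|_{L^{p}(\partial D)}\le C\sum_{j=l}^{m-1}\|h_{j}\|_{L^{p}_{1}(\partial D,w\,d\sigma)}.
\end{equation*}

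For the two quantitative estimates, I apply Theorem 6.12 to $\nabla\mathcal{M}_{j}\widetilde{h}_{j-1}$ in $L^{p}(D)$ and Theorem 6.11 to $\mathcal{M}_{j}\widetilde{h}_{j-1}$ in $L^{p}(D)$, summed for $j=2,\ldots,m$; the weight hypothesis $w\in\mathcal{W}^{p,2m-1,\frac{3}{2}}(\partial D)$ (together with Proposition 3.7 to pass to the smaller weight classes needed for each $j$) is exactly what is required to drive these bounds, and the recursive estimate on $\widetilde{h}_{l}$ above then gives (7.5) and (7.6). For uniqueness, if $u_{1}$ and $u_{2}$ are two solutions under the estimate (7.5), then $v=u_{1}-u_{2}$ is polyharmonic in $D$, has all Dirichlet-type data $\Delta^{j}v=0$ on $\partial D$, and $\nabla(v-\mathcal{M}_{1}(\widetilde{h}_{0}^{(1)}-\widetilde{h}_{0}^{(2)}))\in L^{p}(D)$; peeling off the polyharmonic hierarchy by the classical $L^{p}$ uniqueness for the Laplace regularity problem (Lemma 7.1 again) from the top $\Delta^{m-1}v=0$ downward forces each density difference to vanish, hence $v\equiv 0$ modulo the indicated $\mathcal{M}_{1}$ term; the argument for (7.6) is analogous.

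The main obstacle I foresee is the bookkeeping at the trace level: to make the recursive inversion of $\mathcal{M}_{1}$ rigorous I need each intermediate expression $h_{l}-\sum_{j=l+2}^{m}\mathcal{M}_{j-l}\widetilde{h}_{j-1}$ to actually belong to $L^{p}_{1}(\partial D)$, not merely to $L^{p}(\partial D)$, and this forces simultaneous control of $\mathcal{M}_{k}\widetilde{h}_{k-1}$ and of its tangential gradient on $\partial D$ in the correct weighted classes. The monotonicity in Proposition 3.7 and the boundary boundedness Theorems 6.9--6.10 are precisely tailored for this, but verifying that the weight index $2m-1$ with exponent $\frac{3}{2}$ is large enough to absorb all the intermediate $\mathcal{W}^{p,2k-2,\frac{1}{2}}$ and $\mathcal{W}^{p,2k-1,\frac{1}{2}}$ requirements in the recursion, uniformly for $k\le m$, is the delicate step that must be checked carefully.
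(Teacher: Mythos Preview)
Your proposal is correct and follows essentially the same approach as the paper: the paper's proof is the single line ``It is similar to Theorem 4.5 by using Lemma 7.1, Theorems 6.9--6.12 and 7.3 below,'' and you have spelled out precisely those details---the ansatz in terms of $\mathcal{M}_j$, the recursive inversion via Lemma 7.1, and the estimates (7.5)--(7.6) from Theorems 6.11--6.12. One small remark: for showing $\mathcal{M}_{j-l}\widetilde{h}_{j-1}\in L^{p}_{1}(\partial D)$ the paper packages the tangential-gradient bound as Theorem 7.3 (whose proof in turn rests on Theorems 6.8 and 6.10), whereas you deduce it directly from Theorems 6.9--6.10; this is a cosmetic difference only, and your reference to ``Theorem 6.4'' should be to Theorem 6.5.
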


\subsection{Regularity of multi-layer $\mathcal{S}$-potentials $\mathcal{M}_{j}$}

In this section, we study the regularity of the multi-layer $\mathcal{S}$-potentials $\mathcal{M}_{j}$, which are very significant for the solving program to the PHR problems (1.3) in this paper. More precisely, we have

\begin{thm}
Let the Lipschitz domain $D$ and operators $\mathcal{M}_{j}$, $j\geq 2$, be the same as before, $w\in \mathcal{W}^{p,2j-2,\frac{1}{2}}(\partial D)$, $1\leq p<\infty$, then
\begin{equation}\|\nabla_{T}\mathcal{M}_{j}f\|_{L^{p}(\partial D)}\leq
C\| f \|_{L^{p}(\partial D, wd\sigma)}\end{equation}
for any $f\in L^{p}(\partial D, wd\sigma)$, where $\nabla_{T}$ denotes the tangential gradient, $C$ is a constant depending only on $m, n, p$ and $d_{0}$. So $\mathcal{M}_{j}$, $j\geq 2$, are bounded from $L^{p}(\partial D, wd\sigma)$ to $L_{1}^{p}(\partial D)$ for any $w\in \mathcal{W}^{p,2j-1,\frac{1}{2}}(\partial D)$ with $1\leq p<\infty$.
\end{thm}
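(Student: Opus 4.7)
The plan is to reduce Theorem 7.3 to Theorem 6.10, which already establishes the $L^{p}$-boundedness of the full gradient $\nabla \mathcal{M}_{j}$ on $\partial D$. The key pointwise observation is that the tangential gradient $\nabla_{T}$ is obtained from $\nabla$ by projection onto the tangent plane, so at almost every $P\in\partial D$ (where the Lipschitz normal $n_{P}$ exists),
\begin{equation*}
|\nabla_{T}\mathcal{M}_{j}f(P)| = |\nabla\mathcal{M}_{j}f(P) - \langle \nabla\mathcal{M}_{j}f(P), n_{P}\rangle n_{P}| \leq |\nabla\mathcal{M}_{j}f(P)|.
\end{equation*}
Here $\nabla\mathcal{M}_{j}f$ on $\partial D$ is understood as the non-tangential boundary limit from the interior, whose $L^{p}$ control is the content of Theorem 6.10.

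First I would verify that the interior gradient $\nabla\mathcal{M}_{j}f$ admits non-tangential boundary values and that these agree with the tangential derivatives of the boundary trace of $\mathcal{M}_{j}f$ (in the sense needed for the $L_{1}^{p}$ space), using the same dominated convergence argument as in the proof of Theorem 3.5(5) applied to $\partial_{x_{l}}\mathcal{K}_{j}$; the decay estimates of Theorem 6.4(2) together with Theorem 5.4 ensure the required pointwise domination. Once this identification is in place, the first claim
\begin{equation*}
\|\nabla_{T}\mathcal{M}_{j}f\|_{L^{p}(\partial D)} \leq \|\nabla\mathcal{M}_{j}f\|_{L^{p}(\partial D)} \leq C\|f\|_{L^{p}(\partial D,\,wd\sigma)}
\end{equation*}
for $w\in\mathcal{W}^{p,2j-2,\frac{1}{2}}(\partial D)$ follows immediately from Theorem 6.10.

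For the concluding $L_{1}^{p}$ statement, recall that $\|g\|_{L_{1}^{p}(\partial D)} = \|g\|_{L^{p}(\partial D)} + \|\nabla_{T}g\|_{L^{p}(\partial D)}$. Theorem 6.9 gives $\|\mathcal{M}_{j}f\|_{L^{p}(\partial D)} \leq C\|f\|_{L^{p}(\partial D,\,wd\sigma)}$ for $w\in\mathcal{W}^{p,2j-1,\frac{1}{2}}(\partial D)$, while the first part of the present theorem handles the tangential gradient term with the weaker weight class $\mathcal{W}^{p,2j-2,\frac{1}{2}}(\partial D)$. By the monotonicity recorded in Proposition 3.9, $\mathcal{W}^{p,2j-1,\frac{1}{2}}(\partial D) \subset \mathcal{W}^{p,2j-2,\frac{1}{2}}(\partial D)$, so both estimates are available simultaneously for any $w\in\mathcal{W}^{p,2j-1,\frac{1}{2}}(\partial D)$, and adding them yields the desired $L^{p}(\partial D,\,wd\sigma)\to L_{1}^{p}(\partial D)$ boundedness.

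The only genuine technical point — and the step I expect to demand the most care — is the justification that the interior non-tangential limit of $\nabla \mathcal{M}_{j}f$ really gives the boundary object whose $L^{p}$-norm dominates $\|\nabla_{T}\mathcal{M}_{j}f\|_{L^{p}(\partial D)}$. Once the commutation of $\partial_{x_{l}}$ with the integral over $\partial D$ is established via Lemma 4.1 (applied to $\mathcal{K}_{j}$ in place of $K_{m}^{(j)}$) and the decay bounds from Theorem 6.4(2), everything else is a clean combination of Theorems 6.9 and 6.10 with Proposition 3.9.
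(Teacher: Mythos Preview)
Your proposal is correct and follows essentially the same route as the paper: both exploit the orthogonal decomposition $\nabla\mathcal{M}_{j}f=\nabla_{T}\mathcal{M}_{j}f\oplus(\partial_{N}\mathcal{M}_{j}f)\,n$ on $\partial D$ and then appeal to the already established $L^{p}$ bounds for $\nabla\mathcal{M}_{j}$ (Theorem~6.10), combining with Theorem~6.9 and the weight-class monotonicity for the $L_{1}^{p}$ conclusion. Your version is in fact a touch cleaner, since the pointwise inequality $|\nabla_{T}u|\le|\nabla u|$ lets you invoke Theorem~6.10 alone, whereas the paper writes $\nabla_{T}\mathcal{M}_{j}f=\nabla\mathcal{M}_{j}f-(\partial_{N}\mathcal{M}_{j}f)\,n$ and bounds the two pieces separately, using Theorem~6.8 (via $\partial_{N}\mathcal{M}_{j}f=-\mathrm{K}_{j}^{*}f$) for the normal term --- an unnecessary detour. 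One small correction: the monotonicity result you cite is Proposition~3.7, not Proposition~3.9.
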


\begin{proof}
It is similar to the argument of Theorem 3.11, or directly follows from Theorems 6.8 and 6.10 by the following fact
\begin{align}
\|\nabla_{T}\mathcal{M}_{j}f\|_{L^{p}(\partial D)}&=\left\|\nabla\mathcal{M}_{j}f-\left(\frac{\partial}{\partial N}\mathcal{M}_{j}f\right)\cdot n\right\|_{L^{p}(\partial D)}\\
&\leq 2^{p-1}\left(\|\nabla\mathcal{M}_{j}f\|_{L^{p}(\partial D)}+\left\|\frac{\partial}{\partial N}\mathcal{M}_{j}f\right\|_{L^{p}(\partial D)}\right)\nonumber\\
&= 2^{p-1}\left(\|\nabla\mathcal{M}_{j}f\|_{L^{p}(\partial D)}+\left\|\mathrm{K}_{j}^{*}f\right\|_{L^{p}(\partial D)}\right)\nonumber\\
&\leq C\|f\|_{L^{p}(\partial D, wd\sigma)}\nonumber
\end{align}
since $\nabla\mathcal{M}_{j}f=\nabla_{T}\mathcal{M}_{j}f\oplus\left(\frac{\partial}{\partial N}\mathcal{M}_{j}f\right)\cdot n$, where $\oplus$ denotes the operation of direct sum, and $n$ is the outward unit normal vector.
\end{proof}

\begin{rem}
It must be noted, using the facts in Remark 3.10, that all the results in Sections 3.1-7 hold with the weight spaces $\mathcal{W}^{p, l,\frac{1}{2}}(\partial D)$ and $\mathcal{W}^{p, l,\frac{3}{2}}(\partial D)$ replaced by $\mathcal{W}^{p, l,\epsilon}(\partial D)$ and $\mathcal{W}^{p, l,1+\epsilon}(\partial D)$ for any $0<\epsilon<1$.
\end{rem}

\section{Bounded Lipschitz domains}

In this section, we mainly consider the corresponding polyharmonic Dirichlet, Neumann, and regularity problems in $L^{p}$ in bounded Lipschitz domains. Now the higher order conjugate Poisson and Poisson kernels $K_{m}^{(j)}=D_{m}^{(j)}$, and the polyharmonic fundamental solutions $\mathcal{K}_{m}=\mathcal{D}_{m}$, $1\leq j\leq n+1$, $m\in \mathbb{N}$. In other words, here $\mathrm{S. P.} [K_{m}^{(j)}]\equiv 0$ and $\mathrm{S. P.} [\mathcal{K}_{m}]\equiv 0$ for any $1\leq j\leq n+1$ and $m\in \mathbb{N}$.

In the same way, due to Dahlberg, Kenig and Verchota et al., we have
\begin{lem}[\cite{dk1,v0}]
There exists $\varepsilon=\varepsilon(D)>0$ such that
$\frac{1}{2}I-T^{*}$ is an invertible mapping from
$L^{p}_{0}(\partial D)$ onto $L_{0}^{p}(\partial D)$, $1<p<2+\varepsilon$, where $L_{0}^{p}(\partial D)=\{f\in L^{p}(\partial D): \int_{\partial D}fd\sigma=0\}$.
\end{lem}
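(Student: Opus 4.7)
The plan is to follow the classical layer-potential strategy developed by Jerison-Kenig, Verchota, and Dahlberg-Kenig. First I would check that $\tfrac{1}{2}I - T^*$ automatically maps $L^p(\partial D)$ into the mean-zero subspace. By the Gauss flux identity applied to the explicit kernel in (3.11), one has $M_1 \mathbf{1}(X) \equiv 1$ for $X$ in the bounded domain $D$; taking non-tangential limits and invoking the jump formula (3.6) yields $T\mathbf{1} \equiv \tfrac{1}{2}$ a.e.\ on $\partial D$. Duality then gives $\int_{\partial D}(\tfrac{1}{2}I - T^*)f\,d\sigma = \int_{\partial D} f\,(\tfrac{1}{2}I - T)\mathbf{1}\,d\sigma = 0$, so the range condition is built in for every $f \in L^p(\partial D)$.

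Next I would establish $L^2$-invertibility on $L^2_0(\partial D)$. The Coifman-McIntosh-Meyer theorem provides $L^2$-boundedness of $T^*$. For invertibility I would invoke a Rellich-type identity: choose a smooth transversal vector field $\alpha$ with $\alpha \cdot n \geq c_0 > 0$ a.e.\ on $\partial D$, available from the Lipschitz graph structure, and apply integration by parts to $u = \mathcal{M}_1 f$ separately in $D$ and $\mathbb{R}^{n+1}\setminus\overline{D}$. This produces the two-sided comparison
\begin{equation*}
\|(\tfrac{1}{2}I + T^*)f\|_{L^2(\partial D)} \sim \|\nabla_T \mathcal{M}_1 f\|_{L^2(\partial D)} \sim \|(-\tfrac{1}{2}I + T^*)f\|_{L^2(\partial D)}.
\end{equation*}
For the kernel, suppose $(-\tfrac{1}{2}I + T^*)f = 0$ with $f \in L^2_0$. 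Then $\mathcal{M}_1 f$ is harmonic in $D$ with vanishing interior normal derivative, hence constant in $D$; the mean-zero hypothesis forces the decay $\mathcal{M}_1 f(X) = O(|X|^{-n})$ at infinity, so that constant is zero, and uniqueness of the exterior Dirichlet problem forces $\mathcal{M}_1 f \equiv 0$ on all of $\mathbb{R}^{n+1}$. The jump of the normal derivative of $\mathcal{M}_1 f$ then recovers $f$, so $f \equiv 0$. Combined with the Rellich estimates, standard Fredholm reasoning gives bijectivity on $L^2_0(\partial D)$.

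To reach the stated range $1 < p < 2 + \varepsilon$, I would extrapolate off $p = 2$ by duality and atomic decomposition: the dual statement is invertibility of $\tfrac{1}{2}I - T$ on $L^q(\partial D)$ modulo constants for $q$ just above $2$, which follows from Dahlberg's $L^q$-Dirichlet theory together with Verchota's layer-potential formulation; the $H^1$-atomic estimates of Dahlberg-Kenig and interpolation then supply the interval $(1, 2]$. The main obstacle throughout is the Rellich step: controlling the non-tangential maximal function of $\nabla \mathcal{M}_1 f$ on a merely Lipschitz boundary rests on the Coifman-McIntosh-Meyer theorem as a highly nontrivial black box, and the global interior-versus-exterior comparison of normal and tangential derivatives must be carried out without any smoothness of $\partial D$.
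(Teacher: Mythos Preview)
The paper does not prove this lemma at all; it is simply quoted from the cited references \cite{dk1,v0}. Your outline is a correct sketch of precisely the argument found there: Coifman--McIntosh--Meyer for $L^2$-boundedness, Rellich identities with a transversal vector field for the two-sided estimate, and then the Dahlberg--Kenig $H^1$-atomic theory together with duality and interpolation to reach the full range $1<p<2+\varepsilon$. So your approach matches the literature the paper is citing, and there is nothing further to compare against in the paper itself.

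One small wrinkle in your kernel argument deserves tightening. From $(-\tfrac{1}{2}I+T^*)f=0$ you correctly get that $\mathcal{M}_1 f$ is constant in $D$, and the mean-zero condition on $f$ gives the $O(|X|^{-n})$ decay of $\mathcal{M}_1 f$ at infinity. But the sentence ``so that constant is zero'' does not follow directly: the constant lives in the \emph{interior} while the decay is in the \emph{exterior}, and the single layer is continuous across $\partial D$, so a priori you only know the exterior problem has constant boundary data and decays at infinity. The cleanest way to close this is to use your own Rellich equivalence: since $\|(-\tfrac{1}{2}I+T^*)f\|_{L^2}=0$, the comparison forces $\|(\tfrac{1}{2}I+T^*)f\|_{L^2}=0$ as well, i.e.\ the \emph{exterior} normal derivative also vanishes. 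Uniqueness for the exterior Neumann problem with decay then gives $\mathcal{M}_1 f\equiv 0$ outside $\overline{D}$, hence on $\partial D$ by continuity of the single layer, hence in $D$ by interior Dirichlet uniqueness; the normal-derivative jump finally yields $f=0$. This is the order Verchota uses, and with that adjustment your sketch is complete.
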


As some preliminaries, we firstly establish some lemmas as follows.

\begin{lem}
Let $D$ be a bounded Lipschitz domain, $D_{m}=(D_{m}^{(1)}, \ldots, D_{m}^{(n+1)})$ in which $D_{m}^{(j)}$ are defined as in Lemma 2.2, then there exists a constant $C=C(m,n,D)$ such that
\begin{equation}
\sup_{Q\in\partial D}\left(\int_{\partial D}\big|\langle D_{m}(Q, P), n_{P}\rangle\big| d\sigma(P)\right)<C
\end{equation}
and
\begin{equation}
\sup_{Q\in\partial D}\left(\int_{\partial D}\big|\langle D_{m}(Q, P), n_{Q}\rangle\big| d\sigma(P)\right)<C
\end{equation}
for any $m\geq 2$, where $n_{P}$ and $n_{Q}$ are the unit outward normal vectors respectively at $P$ and $Q$ on $\partial D$.
\end{lem}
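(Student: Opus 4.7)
The plan is to bound the integrand pointwise using the explicit expressions from Lemma 2.2 and then integrate over $\partial D$ after splitting into a ``near'' part (around the diagonal $P=Q$) and a ``far'' part. From Lemma 2.2, for $m \geq 2$ each component $D_{m}^{(j)}(Q,P)$ equals a fixed multiple of $(Q_{j}-P_{j})|Q-P|^{2m-(n+3)}$, possibly multiplied in the odd-$n$, $m\geq (n+3)/2$ case by a factor that is a constant plus a constant multiple of $\log|Q-P|$. Using $|\langle Q-P, n_{P}\rangle|\leq |Q-P|$ and the fact that $|n_{P}|=1$, this gives
\begin{equation*}
|\langle D_{m}(Q,P), n_{P}\rangle| \leq C_{m,n}\,|Q-P|^{2m-(n+2)}\bigl(1+\bigl|\log|Q-P|\bigr|\bigr),
\end{equation*}
and the identical bound holds with $n_{Q}$ in place of $n_{P}$ since $|n_{Q}|=1$ as well.

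Since $D$ is bounded, $\partial D$ has finite surface measure and finite diameter. Fixing $\delta>0$ smaller than this diameter and splitting $\partial D = (\partial D\cap B_{\delta}(Q)) \cup (\partial D\setminus B_{\delta}(Q))$, the far-field contribution is dominated by $\sigma(\partial D)$ times the supremum of the kernel on $\{|P-Q|\geq \delta\}$, both of which are controlled by constants depending only on $m,n,D$. For the near-field piece I would cover $\partial D$ by finitely many Lipschitz coordinate charts: on each chart, parameterizing $\partial D$ near $Q$ as a graph over the tangent hyperplane at $Q$ gives $d\sigma(P)\leq\sqrt{1+L^{2}}\,dx$ and $|P-Q|\geq c(L)|x|$, where $L$ is the Lipschitz constant. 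Polar coordinates on the $n$-dimensional projection then reduce the estimate to
\begin{equation*}
\int_{\partial D\cap B_{\delta}(Q)}|Q-P|^{2m-(n+2)}\bigl(1+\bigl|\log|Q-P|\bigr|\bigr)\,d\sigma(P) \leq C\int_{0}^{\delta}t^{2m-3}\bigl(1+|\log t|\bigr)\,dt,
\end{equation*}
which converges for $m\geq 2$ because $2m-3\geq 1$, and whose value depends only on $m,n,\delta$.

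Because all constants at each step depend only on $m$, $n$, the Lipschitz character of $\partial D$, and $\mathrm{diam}(\partial D)$, the resulting bound is uniform in $Q\in\partial D$, yielding both supremum estimates simultaneously. There is no serious obstacle; the only point requiring mild attention is the odd-$n$, $m\geq(n+3)/2$ case, where the logarithmic factor from Lemma 2.2 must be absorbed, but since $t^{2m-3}|\log t|$ is integrable on $(0,\delta)$ for every $m\geq 2$, this is automatic. The essence of the argument is the observation that the condition $m\geq 2$ ensures local integrability of the kernel on the $n$-dimensional surface $\partial D$.
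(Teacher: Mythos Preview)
Your proof is correct and follows essentially the same approach as the paper: establish the pointwise bound $|\langle D_{m}(Q,P),n_{\cdot}\rangle|\leq C_{m,n}|P-Q|^{2m-(n+2)}(1+|\log|P-Q||)$, split the integral into near and far pieces, and handle the near piece via the Lipschitz graph structure and polar coordinates in $\mathbb{R}^{n}$. The only cosmetic differences are that the paper absorbs the logarithm into a small power $|P-Q|^{-\eta}$ before integrating (whereas you integrate $t^{2m-3}(1+|\log t|)$ directly, which is slightly cleaner), and that your phrase ``graph over the tangent hyperplane at $Q$'' should read ``graph over the base hyperplane of the coordinate cylinder containing $Q$,'' since Lipschitz boundaries need not have tangent hyperplanes at every point.
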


\begin{proof}
At first, we observe that
\begin{equation}
|\langle D_{m}(Q, P), n_{\cdot}\rangle|\leq C_{m,n}|P-Q|^{2m-(n+2)}\left(1+\big|\log|P-Q|\big|\right).
\end{equation}
So it is sufficient to verify (8.1). By the definition of bounded Lipschitz domain, set $\{L_{1},\ldots, L_{s}\}$ be a finite cover of circular coordinate cylinders on $\partial D$ centered respectively at $Q_{j}$, $1\leq j\leq s$ whose bases have positive distances from $\partial D$. That is, there exists a Lipschitz function $\varphi_{j}: \mathbb{R}^{n}\rightarrow \mathbb{R}$, $1\leq j\leq s$ such that
\begin{description}
  \item[(i)] $|\varphi_{j}(\underline{x})-\varphi_{j}(\underline{y})|\leq \mathcal{L}_{j}|\underline{x}-\underline{y}|$ for any $\underline{x}, \underline{y}\in \mathbb{R}^{n}$ with $0<\mathcal{L}_{j}<\infty$;
  \item[(ii)] $L_{j}\cap D=\{(\underline{x}, x_{n+1}): x_{n+1}>\varphi_{j}(\underline{x})\}$;
  \item[(iii)] $L\cap \partial D=\{(\underline{x}, x_{n+1}): x_{n+1}=\varphi_{j}(\underline{x})\}$;
  \item[(iv)] $Q_{j}=(\underline{0}, \varphi_{j}(\underline{0}))$,
\end{description}
where $\underline{x}=\{x_{1}, \ldots, x_{n}\}\in \mathbb{R}^{n}$. Let $\mathcal{L}=\max_{1\leq j\leq s}\mathcal{L}_{j}$, $\mathcal{L}$ is usually called the Lipschitz constant (or Lipschitz character). By a rearrangement, we can assume that all $L_{j}$ are adjacent with each other in turn.

Denote that $d_{j}=\mathrm{dist}\{Q_{j}, \partial (L_{j}\cap\partial D)\}$, $1\leq j\leq s$. In the coordinate system associated with $(L_{j}, Q_{j})$, define the projection $\pi_{j}: \mathbb{R}^{n+1}\rightarrow \mathbb{R}^{n}$ with $\pi_{j}(\underline{x}, x_{n+1})=\underline{x}$. Let $U_{j}=\pi_{j}(D)$ and $\rho_{j}=\max_{\underline{x}\in\partial U_{j}}|\underline{x}-\underline{0}|$. Set $d=\min_{j}d_{j}$ and $\rho=\max_{j}\rho_{j}$.

To do prove (8.1), let $Q\in\partial D$ be temporarily fixed. Then $Q\in L_{j_{0}}\cap \partial D$ for some $1<j_{0}<s$, or possibly $Q\in L_{j^{\prime}_{0}}\cap \partial D$ with $|j^{\prime}_{0}-j_{0}|=1$. In fact, with respect to the latter case, $Q\in L_{j_{0}}\cap L_{j^{\prime}_{0}}\,(\neq\emptyset)$, and in the following argument, we only consider the latter case, so does the former case. Furthermore, it is easy to find that $\pi_{j_{0}}\big(B(Q, \frac{d}{2})\cap L_{j_{0}}\cap\partial D\big)\subset B_{j_{0}}(\underline{0}, \rho)$ and $\pi_{j^{\prime}_{0}}\big(B(Q, \frac{d}{2})\cap L_{j^{\prime}_{0}}\cap\partial D\big)\subset B_{j^{\prime}_{0}}(\underline{0}, \rho)$.

With the above preliminaries, by (8.3), we have
\begin{align}
\int_{\partial D}\big|\langle D_{m}(Q, P), n_{Q}\rangle\big| d\sigma(P)&\leq C_{m,n}\int_{\partial D}|P-Q|^{2m-(n+2)}\left(1+\big|\log|P-Q|\big|\right) d\sigma(P)\\
&\leq C_{m,n,\mathrm{diam} (D)}\int_{\partial D}|P-Q|^{2m-(n+2)-\eta} d\sigma(P)\nonumber\\
&\leq C_{m,n,\mathrm{diam} (D)}\int_{\partial D}\frac{1}{|P-Q|^{(n-2)+\eta}} d\sigma(P)\,\,(\mathrm{since}\,\,m\geq2)\nonumber\\
&=C_{m,n,\mathrm{diam} (D)}\Big[\int_{\partial D\cap B(Q, \frac{d}{2})}\frac{1}{|P-Q|^{(n-2)+\eta}} d\sigma(P)\nonumber\\
&\,\,\,\,\,\,+\int_{\partial D\setminus B(Q, \frac{d}{2})}\frac{1}{|P-Q|^{(n-2)+\eta}} d\sigma(P)\Big]\nonumber\\
&\leq C_{m,n,\mathrm{diam} (D)}\Big[\int_{\partial D\cap B(Q, \frac{d}{2})}\frac{1}{|P-Q|^{(n-2)+\eta}} d\sigma(P)\nonumber\\
&\,\,\,\,\,\,\,+\left(\frac{2}{d}\right)^{n-2+\eta}\int_{\partial D\setminus B(Q, \frac{d}{2})}d\sigma(P)\Big]\nonumber\\
&\leq C_{m,n,\mathrm{diam} (D)}\Big[\int_{\partial D\cap B(Q, \frac{d}{2})}\frac{1}{|P-Q|^{(n-2)+\eta}} d\sigma(P)\nonumber\\
&\,\,\,\,\,\,\,+\left(\frac{2}{d}\right)^{n-2+\eta}\sigma(\partial D)\Big]\nonumber
\end{align}
in which
\begin{align}
&\,\,\,\,\,\,\int_{\partial D\cap B(Q, \frac{d}{2})}\frac{1}{|P-Q|^{(n-2)+\eta}} d\sigma(P)\\
&\leq\int_{\partial D\cap L_{j_{0}}\cap B(Q, \frac{d}{2})}\frac{1}{|P-Q|^{(n-2)+\eta}} d\sigma(P)\nonumber\\
&\,\,\,\,\,\,+\int_{\partial D\cap L_{j^{\prime}_{0}}\cap B(Q, \frac{d}{2})}\frac{1}{|P-Q|^{(n-2)+\eta}} d\sigma(P)\nonumber\\
&=\int_{\pi_{j_{0}}\big(\partial D\cap L_{j_{0}}\cap B(Q, \frac{d}{2})\big)}\frac{\sqrt{1+|\nabla\varphi_{j_{0}}(\underline{x})|^{2}}}{\big(|\underline{x}-\underline{x_{Q}}|^{2}+|\varphi_{j_{0}}(\underline{x})-\varphi_{j_{0}}(\underline{x_{Q}})|^{2}\big)^{\frac{(n-2)+\eta}{2}}} d\underline{x}\nonumber\\
&\,\,\,\,\,\,+\int_{\pi_{j^{\prime}_{0}}\big(\partial D\cap L_{j^{\prime}_{0}}\cap B(Q, \frac{d}{2})\big)}\frac{\sqrt{1+|\nabla\varphi_{j_{0}^{\prime}}(\underline{x})|^{2}}}{\big(|\underline{x}-\underline{x_{Q}}|^{2}+|\varphi_{j_{0}^{\prime}}(\underline{x})-\varphi_{j^{\prime}_{0}}(\underline{x_{Q}})|^{2}\big)^{\frac{(n-2)+\eta}{2}}} d\underline{x}\nonumber\\
&\leq\sqrt{1+\mathcal{L}^{2}}\left[\int_{B_{j_{0}}(\underline{0}, \rho)}\frac{1}{|\underline{x}-\underline{x_{Q}}|^{(n-2)+\eta}} d\underline{x}+\int_{B_{j^{\prime}_{0}}(\underline{0}, \rho)}\frac{1}{|\underline{x}-\underline{x_{Q}}|^{(n-2)+\eta}} d\underline{x}\right]\nonumber\\
&\leq\sqrt{1+\mathcal{L}^{2}}\left[\int_{B_{j_{0}}(\underline{x_{Q}}, 2\rho)}\frac{1}{|\underline{x}-\underline{x_{Q}}|^{(n-2)+\eta}} d\underline{x}+\int_{B_{j^{\prime}_{0}}(\underline{x_{Q}}, 2\rho)}\frac{1}{|\underline{x}-\underline{x_{Q}}|^{(n-2)+\eta}} d\underline{x}\right]\nonumber
\end{align}
\begin{align}
&\leq2\sqrt{1+\mathcal{L}^{2}} \int_{0}^{2\rho}\int_{S^{n-1}}\frac{1}{r^{(n-2)+\eta}}r^{n-1}drd\sigma(\omega)\nonumber\\
&=\frac{2}{2-\eta}(2\rho)^{2-\eta}\sqrt{1+\mathcal{L}^{2}}\sigma(S^{n-1}),\nonumber
\end{align}
where $0<\eta<1$ which can be arbitrary selected, the fact $\lim_{|P-Q|\rightarrow 0}|P-Q|^{\eta}\log|P-Q|=0$ has been used in the second inequality of (8.4); whereas in the third inequality in (8.5), we have used the fact that $\underline{x},\, \underline{x_{Q}}\in B_{j_{0}}(\underline{0}, \rho)\,\big(B_{j^{\prime}_{0}}(\underline{0}, \rho)\big)$  implies $\underline{x}\in B_{j_{0}}(\underline{x_{Q}}, 2\rho)\,\big(B_{j^{\prime}_{0}}(\underline{x_{Q}}, 2\rho)\big)$.

Therefore, by (8.4) and (8.5), we have
\begin{align}
\int_{\partial D}\big|\langle D_{m}(Q, P), n_{Q}\rangle\big| d\sigma(P)&\leq C_{m,n,\mathrm{diam} D}\Big[\frac{2}{2-\eta}(2\rho)^{2-\eta}\sqrt{1+\mathcal{L}^{2}}\sigma(S^{n-1})\\
&\,\,\,\,\,\,+\left(\frac{2}{d}\right)^{n-2+\eta}\sigma(\partial D)\Big].\nonumber
\end{align}

Denote
\begin{equation}
C(m,n,D)=C_{m,n,\mathrm{diam} D}\left[\frac{2}{2-\eta}(2\rho)^{2-\eta}\sqrt{1+\mathcal{L}^{2}}\sigma(S^{n-1})+\left(\frac{2}{d}\right)^{n-2+\eta}\sigma(\partial D)\right],
\end{equation}
which depends only on $m, n$ and $D$, then (8.1) follows from (8.6) since $Q\in \partial D$ is arbitrarily chosen.. Thus the lemma is completed.
\end{proof}

\begin{lem}
Let $D$ be a bounded Lipschitz domain, $D_{m}=(D_{m}^{(1)}, \ldots, D_{m}^{(n+1)})$ in which $D_{m}^{(j)}$ are defined as in Lemma 2.2, then there exists a constant $C=C(m,n,D)$ such that
\begin{equation}
\sup_{X\in D}\left(\int_{\partial D}\big|\langle D_{m}(X, P), n_{P}\rangle\big| d\sigma(P)\right)<C
\end{equation}
and
\begin{equation}
\sup_{X\in D}\left(\int_{\partial D}\big|\langle D_{m}(X, P), n_{Q}\rangle\big| d\sigma(P)\right)<C
\end{equation}
for any $m\geq 2$, where $n_{P}$ and $n_{Q}$ are the outward unit normal vectors respectively at $P$ and $Q$ on $\in \partial D$.
\end{lem}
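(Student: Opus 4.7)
The plan is to reduce Lemma 8.3 directly to Lemma 8.2 via a triangle-inequality trick, so that virtually no new geometry needs to be developed. As in the derivation of (8.3), I begin with the pointwise bound
\[
|\langle D_m(X,P), n_\cdot\rangle| \leq C_{m,n}|P-X|^{2m-(n+2)}\bigl(1+\bigl|\log|P-X|\bigr|\bigr),
\]
valid for either of the normals $n_P$ or $n_Q$ since $|n_\cdot|=1$. Absorbing the logarithm into an arbitrary small power using $1+|\log r| \leq C_\eta(r^{-\eta}+r^\eta)$ and $|P-X|\leq \mathrm{diam}(D)$, and using $m\geq 2$ so that $2m-(n+2)\geq -(n-2)$, the problem is reduced to showing that for some fixed $0<\eta<1$,
\[
\sup_{X\in D}\int_{\partial D}\frac{d\sigma(P)}{|P-X|^{(n-2)+\eta}}<\infty.
\]

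For each $X\in D$ I choose $Q_X\in\partial D$ with $|X-Q_X|=\mathrm{dist}(X,\partial D)$; such a minimizer exists by compactness of $\partial D$. By the optimality of $Q_X$, for every $P\in\partial D$,
\[
|P-Q_X|\leq |P-X|+|X-Q_X|\leq 2|P-X|,
\]
and hence $|P-X|^{-(n-2)-\eta}\leq 2^{(n-2)+\eta}|P-Q_X|^{-(n-2)-\eta}$. Integrating over $\partial D$ gives
\[
\int_{\partial D}\frac{d\sigma(P)}{|P-X|^{(n-2)+\eta}}\leq 2^{(n-2)+\eta}\int_{\partial D}\frac{d\sigma(P)}{|P-Q_X|^{(n-2)+\eta}}.
\]
The right-hand side was already shown, in the chain of estimates (8.4)--(8.6) proving Lemma 8.2, to be dominated by a constant $C(n,D)$ depending only on the Lipschitz character $\mathcal{L}$, the covering constants $d$ and $\rho$, $\sigma(\partial D)$, $\mathrm{diam}(D)$, and $\eta$, uniformly over $Q_X\in\partial D$. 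Combining these two steps yields (8.8), and (8.9) follows by the same argument because only $|n_\cdot|=1$ enters the kernel bound.

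The main conceptual ingredient is the observation $|P-Q_X|\leq 2|P-X|$, which converts the interior-to-boundary integral into a boundary-to-boundary one and allows direct appeal to the previous lemma. The only mild subtlety is the borderline case $m=2$: the kernel then behaves like $|P-X|^{-(n-2)}(1+|\log|P-X||)$, so the $\eta$-absorption of the logarithm must be chosen with care (any $\eta\in(0,1)$ suffices, since $(n-2)+\eta<n$ keeps the singularity integrable on the $n$-dimensional surface $\partial D$). For $m\geq 3$ the exponent is strictly better than $-(n-2)$ and the estimate is immediate, so no genuinely new obstacle arises beyond what was resolved in Lemma 8.2.
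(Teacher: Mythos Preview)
Your proof is correct. The paper's own proof is simply ``It is similar to Lemma 8.2,'' which most naturally invites the reader to rerun the local-coordinate splitting of (8.4)--(8.5) with an interior point $X$ in place of the boundary point $Q$. Your route is slightly different and cleaner: rather than repeating the cylinder-by-cylinder computation, you observe that the nearest boundary point $Q_X$ satisfies $|P-Q_X|\le 2|P-X|$ for every $P\in\partial D$, which immediately transfers the interior integral to the boundary integral already controlled in Lemma 8.2. This buys you a genuine reduction (no geometry needs to be redone), whereas the paper's implicit approach would have you re-establish the near-singularity estimate in local coordinates for points $X$ possibly off the graph. Both arguments rest on the same kernel bound (8.3) and the same $\eta$-absorption of the logarithm, so the difference is one of packaging rather than substance.
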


\begin{proof}
It is similar to Lemma 8.2.
\end{proof}

\begin{rem}
Let $D$ and $D_{m}$ be as above, by the above two lemmas or a direct argument, in fact, there exists a constant $C=C(m,n,D)$ such that
\begin{equation}
\sup_{X\in \overline{D}}\left(\int_{\partial D}\big|\langle D_{m}(X, P), n_{P}\rangle\big| d\sigma(P)\right)<C
\end{equation}
and
\begin{equation}
\sup_{X\in \overline{D}}\left(\int_{\partial D}\big|\langle D_{m}(X, P), n_{Q}\rangle\big| d\sigma(P)\right)<C
\end{equation}
for any $m\geq 2$, where $n_{P}$ and $n_{Q}$ are the outward unit normal vectors respectively at $P$ and $Q$ on $\in \partial D$.
\end{rem}

With $D_{m}$ replaced by $\mathcal{D}_{m}$, we also have

\begin{lem}
Let $D$ be a bounded Lipschitz domain, $\mathcal{D}_{m}$ are defined as in Lemma 5.1, then there exists a constant $C=C(m,n,D)$ such that
\begin{equation}
\sup_{Q\in\partial D}\left(\int_{\partial D} \big|\mathcal{D}_{m}(Q, P)\big|d\sigma(P)\right)<C
\end{equation}
for any $m\geq 2$.
\end{lem}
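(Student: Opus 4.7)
The plan is to mimic the argument of Lemma 8.2 essentially verbatim, with the only substantive change being the different power of $|Q-P|$ that governs the singularity. From the explicit formulas (5.5), (5.7) and (5.8), one reads off the uniform pointwise bound
\begin{equation*}
|\mathcal{D}_{m}(Q,P)|\leq C_{m,n}\,|Q-P|^{2m-(n+1)}\bigl(1+\bigl|\log|Q-P|\bigr|\bigr),
\end{equation*}
the logarithmic factor being genuinely present only in the odd-$n$, $m\ge(n+1)/2$ case and harmless in the others. Because $D$ is bounded, $|Q-P|$ stays bounded on $\partial D\times\partial D$, so for any $0<\eta<1$ the elementary inequality $|\log t|\le C_{\eta}(t^{-\eta}+1)$ (with a constant depending on $\mathrm{diam}(D)$) lets us absorb the log into a power and reduce the whole estimate to controlling
\begin{equation*}
\sup_{Q\in\partial D}\int_{\partial D}\frac{d\sigma(P)}{|Q-P|^{(n+1-2m)+\eta}}.
\end{equation*}
For $m\ge 2$ the exponent $(n+1-2m)+\eta$ is strictly less than $n$ provided $\eta$ is chosen small (indeed $(n+1-2m)<n-1$ for $m\ge1$), so the kernel is locally integrable on the $n$-dimensional surface $\partial D$.

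Next I would introduce exactly the finite cover $\{L_{1},\dots,L_{s}\}$ of circular coordinate cylinders, together with the Lipschitz graphs $\varphi_{j}$, the projections $\pi_{j}$, and the quantities $d,\rho,\mathcal{L}$ already set up in the proof of Lemma 8.2. For a fixed $Q\in\partial D$, I split
\begin{equation*}
\int_{\partial D}|\mathcal{D}_{m}(Q,P)|\,d\sigma(P)=\int_{\partial D\cap B(Q,d/2)}+\int_{\partial D\setminus B(Q,d/2)}.
\end{equation*}
On the far part, $|Q-P|\ge d/2$, so the integrand is bounded by a constant depending only on $m,n,d,\mathrm{diam}(D)$, and the integral is at most $C\,\sigma(\partial D)$. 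On the near part, $Q$ lies in at most two adjacent cylinders $L_{j_{0}},L_{j_{0}'}$ (the same case analysis as in Lemma 8.2), and on each one the Lipschitz parametrization together with $\sqrt{1+|\nabla\varphi_{j}|^{2}}\le\sqrt{1+\mathcal{L}^{2}}$ and the inequality $|P-Q|\ge|\underline{x}-\underline{x_{Q}}|$ reduce the piece to a Euclidean integral of the form
\begin{equation*}
\int_{B_{j}(\underline{x_{Q}},2\rho)}\frac{d\underline{x}}{|\underline{x}-\underline{x_{Q}}|^{(n+1-2m)+\eta}},
\end{equation*}
which is a standard radial computation finite precisely because $(n+1-2m)+\eta<n$ for $m\ge 2$ and $\eta$ small.

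Collecting the two contributions yields a bound independent of $Q$, of the form
\begin{equation*}
C(m,n,D)=C_{m,n,\mathrm{diam}(D)}\Bigl[C_{\rho,\eta,n}\sqrt{1+\mathcal{L}^{2}}\,\sigma(S^{n-1})+\Bigl(\tfrac{2}{d}\Bigr)^{(n+1-2m)+\eta}\sigma(\partial D)\Bigr],
\end{equation*}
which proves the lemma. The only real point to be careful about is the choice of $\eta$: it must be small enough to keep the near-part radial integral convergent, i.e.\ $(n+1-2m)+\eta<n$, and simultaneously absorb the logarithmic factor; taking $\eta=1/2$ (say) works uniformly in $m\ge 2$ and $n\ge 2$, so there is no genuine obstacle beyond bookkeeping. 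Everything else is a verbatim transcription of Lemma 8.2 with $D_{m}^{(j)}$ replaced by $\mathcal{D}_{m}$ and the exponent $2m-(n+2)$ replaced by the one-order-better $2m-(n+1)$.
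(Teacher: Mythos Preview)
Your proposal is correct and follows exactly the route the paper intends: the paper's own proof consists of the single sentence ``It is similar to Lemma 8.2,'' and what you have written is precisely the adaptation of that argument with the pointwise bound $|\mathcal{D}_{m}(Q,P)|\le C_{m,n}|Q-P|^{2m-(n+1)}(1+|\log|Q-P||)$ replacing (8.3) and the resulting exponent shifted by one. The only slip is the parenthetical ``$(n+1-2m)<n-1$ for $m\ge1$,'' which should read $m\ge2$ (equality holds at $m=1$), but since the lemma is stated for $m\ge2$ this does not affect the argument.
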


\begin{proof}
It is similar to Lemma 8.2.
\end{proof}

\begin{lem}
Let $D$ be a bounded Lipschitz domain, $\mathcal{D}_{m}$ are defined as in Lemma 5.1, then there exists a constant $C=C(m,n,D)$ such that
\begin{equation}
\sup_{X\in D}\left(\int_{\partial D} \big|\mathcal{D}_{m}(X, P)\big|d\sigma(P)\right)<C
\end{equation}
for any $m\geq 2$.
\end{lem}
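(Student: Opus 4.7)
The plan is to mirror the proof of Lemma 8.3, with $\mathcal{D}_m$ playing the role of the normal-component kernel $\langle D_m(\cdot,\cdot), n_{\cdot}\rangle$. First I would read off from the explicit formulas (5.7)--(5.8) the pointwise size estimate
\[
|\mathcal{D}_m(X,P)| \le C_{m,n}\,|X-P|^{2m-(n+1)}\bigl(1+\bigl|\log|X-P|\bigr|\bigr).
\]
Since $|X-P|\le \mathrm{diam}(D)$ is uniformly bounded, the logarithmic factor can be absorbed, for any small $\eta>0$, via the elementary inequality $|\log r|\le C_{\eta}r^{-\eta}$ on $(0,\mathrm{diam}(D)]$, yielding
\[
|\mathcal{D}_m(X,P)| \le C_{m,n,D,\eta}\,|X-P|^{2m-(n+1)-\eta}.
\]
When $2m \ge n+1$ the kernel $\mathcal{D}_m$ is already bounded on $\overline{D}\times\partial D$ and the conclusion is immediate, so I may focus on the singular range $2m<n+1$.

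In this range, I would fix $X\in D$, select a nearest-point $P^{*}\in\partial D$ with $|X-P^{*}|=\mathrm{dist}(X,\partial D)$, and choose a coordinate cylinder $L_{j_0}$ (or the pair of adjacent cylinders $L_{j_0},L_{j_0'}$ if $P^{*}$ lies on the overlap) from the finite cover $\{L_j\}_{j=1}^{s}$ of $\partial D$ introduced in Lemma 8.2, with the associated constants $d,\rho,\mathcal{L}$. Then I would split
\[
\int_{\partial D}|\mathcal{D}_m(X,P)|\,d\sigma(P) = \int_{\partial D\cap B(P^{*},d/2)} + \int_{\partial D\setminus B(P^{*},d/2)}.
\]
The far piece is bounded by $C_{m,n,D,\eta}\,(d/2)^{2m-(n+1)-\eta}\,\sigma(\partial D)$. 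For the near piece I would pull back to the Lipschitz graph $y_{n+1}=\varphi_{j_0}(\underline{y})$ through the projection $\pi_{j_0}$, use $\sqrt{1+|\nabla\varphi_{j_0}|^{2}}\le\sqrt{1+\mathcal{L}^{2}}$ to control the surface element, and exploit the contraction bound $|X-P|\ge|\pi_{j_0}(X)-\underline{y}|$ to reduce matters to a Euclidean integral of the form
\[
\sqrt{1+\mathcal{L}^{2}}\int_{B_{j_0}(\pi_{j_0}(X),\,2\rho)} |\underline{y}-\pi_{j_0}(X)|^{-(n+1-2m)-\eta}\,d\underline{y},
\]
which converges precisely when $(n+1-2m)+\eta<n$, i.e., $\eta<2m-1$.

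The only mild obstacle is the simultaneous absorption of the logarithm and the arrangement $\eta<2m-1$ making the radial integral converge; this is exactly where the hypothesis $m\ge 2$ enters (giving $2m-1\ge 3$), and it parallels the corresponding step for $D_m$ in Lemma 8.2. Since $\pi_{j_0}(X)\in B_{j_0}(\underline{0},\rho)$ implies $B_{j_0}(\pi_{j_0}(X),2\rho)\subset B_{j_0}(\underline{0},3\rho)$, the radial integral is bounded by $C\,\rho^{\,2m-1-\eta}/(2m-1-\eta)$, a constant depending only on $m,n,\rho,\mathcal{L},\eta$. Adding the near and far contributions and taking the supremum over $X\in D$ produces a bound depending only on $m$, $n$, and $D$, as desired.
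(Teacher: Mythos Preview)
Your argument is correct and is exactly the route the paper intends: the paper's proof reads ``similar to Lemma 8.5'' (which in turn is ``similar to Lemma 8.2''), i.e., the pointwise bound from (5.7)--(5.8), absorption of the logarithm into a small power, a near/far split around a boundary point, and a Lipschitz-graph pull-back for the near piece. One small slip: when $2m=n+1$ (odd $n$), $\mathcal{D}_m\sim\log|X-P|$ is \emph{not} bounded on $\overline{D}\times\partial D$, so that borderline case should be absorbed into your singular-range argument rather than declared immediate; your integrability condition $\eta<2m-1$ already covers it.
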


\begin{proof}
It is similar to Lemma 8.5.
\end{proof}

\begin{rem}
Let $D$ and $\mathcal{D}_{m}$ be as above, by Lemmas 8.5 and 8.6 or a direct argument, in fact, we have that there exists a constant $C=C(m,n,D)$ such that
\begin{equation}
\sup_{X\in \overline{D}}\left(\int_{\partial D} \big|\mathcal{D}_{m}(X, P)\big|d\sigma(P)\right)<C
\end{equation}
for any $m\geq 2$.
\end{rem}

Furthermore, we have

\begin{lem}
Let $D$ be a bounded Lipschitz domain, $\mathcal{D}_{m}$ are defined as in Lemma 5.1, then there exists a constant $C=C(m,n,D)$ such that
\begin{equation}
\sup_{X\in \overline{D}}\left(\int_{\partial D} \big|\nabla\mathcal{D}_{m}(X, P)\big|d\sigma(P)\right)<C
\end{equation}
for any $m\geq 2$.
\end{lem}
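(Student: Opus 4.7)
The plan is to reduce this statement to the type of estimate already carried out in Lemma 8.2 (and Lemma 8.3), by first converting the gradient of the polyharmonic fundamental solution into a vector of higher order conjugate Poisson/Poisson kernels via Theorem 5.4, and then invoking the pointwise bound on $D_m^{(j)}$ together with a Lipschitz-chart covering argument.

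First, I would observe that on a bounded Lipschitz domain the singular parts vanish, so $\mathcal{K}_m = \mathcal{D}_m$ and $K_m^{(j)} = D_m^{(j)}$ for all $m \geq 1$ and $1 \leq j \leq n+1$. Then Theorem 5.4 gives
\begin{equation*}
\frac{\partial}{\partial X_j}\mathcal{D}_m(X,P) = D_m^{(j)}(X,P)
\end{equation*}
for $X \neq P$, so that $\nabla_X \mathcal{D}_m(X,P) = D_m(X,P)$ componentwise, and in particular $|\nabla_X \mathcal{D}_m(X,P)| \leq \sqrt{n+1}\,\max_j |D_m^{(j)}(X,P)|$. Thus it suffices to bound $\int_{\partial D}|D_m(X,P)|\,d\sigma(P)$ uniformly in $X \in \overline{D}$.

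Next I would invoke the pointwise estimate used in the proof of Lemma 8.2,
\begin{equation*}
|D_m^{(j)}(X,P)| \leq C_{m,n}\, |X-P|^{2m-(n+2)}\bigl(1+|\log|X-P||\bigr),
\end{equation*}
valid for any $m \geq 2$. When $2m-(n+2) \geq 0$ the integrand is bounded on the compact set $\partial D$ and the conclusion is immediate. For the case $2m-(n+2) < 0$ (which occurs only when $m \leq n/2$), choose a small $\eta \in (0,1)$ and use $|X-P|^\eta |\log|X-P|| \to 0$ to absorb the logarithm; the task then becomes controlling $\int_{\partial D}|X-P|^{-(n-2+\eta)}\,d\sigma(P)$ uniformly in $X \in \overline{D}$.

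This last estimate is exactly the computation carried out in the inequality chain (8.4)--(8.5) of Lemma 8.2, the only change being that $Q \in \partial D$ is replaced by a general $X \in \overline{D}$. Fix a finite cover of $\partial D$ by circular coordinate cylinders $\{L_1,\ldots,L_s\}$ with Lipschitz constant $\mathcal{L}$ and set $d = \min_j d_j$, $\rho = \max_j\rho_j$ as in Lemma 8.2. Split the integral into $\partial D \cap B(X,d/2)$ and its complement: the complement contributes at most $(2/d)^{n-2+\eta}\sigma(\partial D)$, while the near part is estimated chart by chart after projecting via $\pi_{j_0}$ (and $\pi_{j_0'}$ if $X$ lies at the overlap of two charts, or after choosing the chart containing the nearest boundary point to $X$ when $X \in D$), yielding $\frac{2}{2-\eta}(2\rho)^{2-\eta}\sqrt{1+\mathcal{L}^2}\,\sigma(S^{n-1})$. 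Adding these and using that all constants depend only on $m,n,D$ produces the desired uniform bound.

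The main obstacle is not computational but conceptual: one must verify that the chart-covering argument of Lemma 8.2 genuinely extends uniformly to all $X \in \overline{D}$, not merely to $X$ on $\partial D$ or in the open interior. This is handled by observing that for any $X \in \overline{D}$ there is a nearest boundary point $Q_X$, and the estimate on $\partial D \cap B(X,d/2) \subset \partial D \cap B(Q_X, 3d/2)$ (after possibly shrinking $d$) reduces to the bounded-cover computation already performed; the bound $(2/d)^{n-2+\eta}\sigma(\partial D)$ on the far part is insensitive to the location of $X$. Thus (8.15) follows with a constant $C = C(m,n,D)$.
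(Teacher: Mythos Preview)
Your proposal is correct and follows essentially the same approach as the paper. The paper's proof is just two lines: it cites (5.17), which is the identity $\nabla\mathcal{D}_m = D_m$ established inside the proof of Theorem 5.4, and then says the result is ``similar to Lemma 8.2 as Remark 8.4 states''; you arrive at the same identity via Theorem 5.4 together with the vanishing of the singular parts in the bounded case, and then spell out in detail the extension of the chart-covering estimate from $Q\in\partial D$ to $X\in\overline{D}$, which the paper packages into Remark 8.4 and Lemma 8.3.
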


\begin{proof}
By (5.17), $\nabla\mathcal{D}_{m}=D_{m}$. So it is similar to Lemma 8.2 as Remark 8.4 states.
\end{proof}

\begin{rem}
By observing the argument of Lemma 8.2, it is easy to find that Lemmas 8.5 and 8.6, as well as (8.14) in Remark 8.7 also holds when $m=1$.
\end{rem}

In terms of above lemmas, we can obtain some boundedness properties in $L^{p}$ for the operators $\mathrm{K}^{*}_{m}$, $\mathrm{K}_{m}$,  $M_{j}$, $\mathcal{M}_{j}$ and $\nabla \mathcal{M}_{j}$ and so on, which are important in the approach to solve the polyharmonic BVPs (1.1)-(1.3) in the case of bounded Lipschitz domains of this section.

\begin{thm}
Let $D$ be a bounded Lipschitz domain, $\mathrm{K}^{*}_{m}$, $m\geq 2$ be as in Theorem 6.3, then
\begin{equation}
\|\mathrm{K}^{*}_{m}f\|_{L^{p}(\partial D)}\leq C\|f\|_{L^{p}(\partial D)}
\end{equation}
for any $f\in L^{p}(\partial D)$, $1\leq p\leq \infty$. Furthermore, if
\begin{equation}
\int_{\partial D}\mathcal{N}_{m-1}(Q)f(Q)d\sigma(Q)=0,
\end{equation}
then
\begin{equation}
\int_{\partial D}\mathrm{K}^{*}_{m}f(P)d\sigma(P)=0,
\end{equation}
where $\mathcal{N}_{m-1}$ is the $(m-1)$-th order Newtonian potential on $D$ defined as follows
\begin{equation}
\mathcal{N}_{m-1}(Y)=\int_{D}\mathcal{D}_{m-1}(X, Y)dX,\,\,Y\in \mathbb{R}^{n+1}.
\end{equation}
\end{thm}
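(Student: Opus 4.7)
The plan has two independent parts, matching the two assertions.

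For the $L^p$ bound I would apply Schur's test to the integral operator with kernel $k(P,Q)=\langle K_m(Q,P),n_P\rangle$. In a bounded Lipschitz domain we have $K_m^{(j)}=D_m^{(j)}$, so Proposition 2.6 gives the odd symmetry $D_m^{(j)}(Q,P)=-D_m^{(j)}(P,Q)$, and the elementary bound
\begin{equation*}
|k(P,Q)|\leq C_{m,n}|P-Q|^{2m-(n+2)}\bigl(1+|\log|P-Q||\bigr),
\end{equation*}
which is the same expression that controls both $|\langle D_m(Q,P),n_P\rangle|$ and $|\langle D_m(P,Q),n_P\rangle|$ and is symmetric in $P,Q$. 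Therefore the $L^1$-row and $L^1$-column bounds required by Schur's test reduce to exactly the estimate carried out in the proof of Lemma 8.2 (the calculation (8.4)-(8.5) applied once with $Q$ fixed and once with $P$ fixed). Since the bound is uniform and symmetric, one concludes
\begin{equation*}
\sup_{P\in\partial D}\int_{\partial D}|k(P,Q)|\,d\sigma(Q)<\infty,\quad \sup_{Q\in\partial D}\int_{\partial D}|k(P,Q)|\,d\sigma(P)<\infty,
\end{equation*}
which by Schur (combined with trivial interpolation between $L^\infty\to L^\infty$ and $L^1\to L^1$) gives $\|\mathrm{K}^*_m f\|_{L^p(\partial D)}\leq C\|f\|_{L^p(\partial D)}$ for every $1\le p\le\infty$.

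For the second assertion I would use Fubini to swap the order of integration and then reduce the inner boundary integral to a volume integral via the divergence theorem. Concretely, the absolute integrability supplied by Lemma 8.2 legitimises Fubini, giving
\begin{equation*}
\int_{\partial D}\mathrm{K}^*_m f(P)\,d\sigma(P)=\int_{\partial D}f(Q)\Bigl(\int_{\partial D}\langle K_m(Q,P),n_P\rangle\,d\sigma(P)\Bigr)\,d\sigma(Q).
\end{equation*}
By Theorem 5.4, $K_m^{(j)}(Q,P)=\partial_{P_j}\mathcal{K}_m(Q,P)$, so the inner integrand equals $\partial\mathcal{K}_m(Q,\cdot)/\partial N$ at $P$. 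Applying the divergence theorem on $D$ (with the standard excision of a small ball $B(Q,\epsilon)$ around the boundary singularity at $Q\in\partial D$, and using the symmetry $\mathcal{K}_m(Q,X)=\mathcal{K}_m(X,Q)$ together with $\Delta_X\mathcal{K}_m(Q,X)=\mathcal{K}_{m-1}(Q,X)$ from Lemma 5.1),
\begin{equation*}
\int_{\partial D}\langle K_m(Q,P),n_P\rangle\,d\sigma(P)=\int_D \mathcal{D}_{m-1}(X,Q)\,dX=\mathcal{N}_{m-1}(Q).
\end{equation*}
The hypothesis $\int_{\partial D}\mathcal{N}_{m-1}(Q)f(Q)\,d\sigma(Q)=0$ then closes the computation.

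The main technical obstacle is justifying the divergence theorem on the Lipschitz domain $D$ for the function $X\mapsto\mathcal{K}_m(Q,X)$, which is singular as $X\to Q\in\partial D$. The remedy is standard: remove the set $B(Q,\epsilon)\cap\overline{D}$, apply divergence in the regular region, and send $\epsilon\downarrow0$. Since for $m\geq 2$ we have $|\mathcal{K}_m(Q,X)|\lesssim |X-Q|^{2m-(n+1)}(1+|\log|X-Q||)$ and $|\nabla\mathcal{K}_m(Q,X)|=|D_m(Q,X)|\lesssim |X-Q|^{2m-(n+2)}(1+|\log|X-Q||)$, both the boundary contribution on $\partial B(Q,\epsilon)\cap\overline{D}$ and the volume integral on $D\setminus B(Q,\epsilon)$ of $\mathcal{D}_{m-1}(Q,X)$ behave nicely in the limit (integrability over $D$ near $Q$ reduces to $|X-Q|^{2m-n-3}$, which is locally integrable for $m\geq 2$). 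The only other small point is the Fubini justification on $\partial D\times\partial D$, but this is immediate from the Schur-type $L^1$ estimates used in the first part.
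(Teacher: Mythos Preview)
Your proposal is correct and follows essentially the same route as the paper: the $L^p$ bound is obtained from the $L^1$ and $L^\infty$ endpoints (your Schur test is exactly the paper's use of (8.1)--(8.2) plus Riesz--Thorin interpolation), and the mean-zero identity is derived by Fubini, Theorem~5.4, and the divergence theorem applied on $D\setminus B(Q,\epsilon)$ with $\epsilon\downarrow 0$, just as in the paper's computation (8.20)--(8.23). Your justification of the limiting argument (local integrability of $\mathcal{D}_{m-1}$ and vanishing of the spherical boundary term) is in fact more explicit than the paper's.
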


\begin{rem}
The classical Newtonian potential is referred to \cite{hel}.
\end{rem}

\begin{proof}
At first, it is easy to verify (8.16). In fact, by (8.1), $\mathrm{K}^{*}_{m}: L^{1}(\partial D)\rightarrow L^{1}(\partial D)$ is bounded. By (8.2), it is easily find that $\mathrm{K}^{*}_{m}: L^{\infty}(\partial D)\rightarrow L^{\infty}(\partial D)$ is also bounded. Then by the interpolation of operators, $\mathrm{K}^{*}_{m}: L^{p}(\partial D)\rightarrow L^{p}(\partial D)$ is bounded for $1<p<\infty$.

Next turn to (8.18) under (8.17). By the definition of the operator $\mathrm{K}^{*}_{m}$ and Theorem 5.4, we have
\begin{align}
\int_{\partial D}\mathrm{K}^{*}_{m}f(P)d\sigma(P)&=\int_{\partial D}\left[\int_{\partial D}\langle D_{m}(Q, P),
        n_{P}\rangle f(Q)d\sigma(Q)\right]d\sigma(P)\\
     &=\int_{\partial D}\left[\int_{\partial D}\langle D_{m}(Q, P),
        n_{P}\rangle d\sigma(P)\right]f(Q)d\sigma(Q)\nonumber\\
     &=\int_{\partial D}\left[\int_{\partial D}\langle \nabla\mathcal{D}_{m}(Q, P),
        n_{P}\rangle d\sigma(P)\right]f(Q)d\sigma(Q)\nonumber\\
     &=\int_{\partial D}\left[\int_{\partial D}\frac{\partial}{\partial N_{p}}\mathcal{D}_{m}(Q, P)d\sigma(P)\right]f(Q)d\sigma(Q)\nonumber
     \end{align}
where
\begin{align}
\int_{\partial D}\frac{\partial}{\partial N_{p}}\mathcal{D}_{m}(Q, P)d\sigma(P)&=\lim_{\epsilon\rightarrow 0}\int_{\partial D\setminus B(Q, \epsilon)}\frac{\partial}{\partial N_{p}}\mathcal{D}_{m}(Q, P)d\sigma(P)\\
&=\lim_{\epsilon\rightarrow 0}\left(\int_{\partial D\setminus B(Q, \epsilon)}+\int_{\partial D\cap B(Q, \epsilon)}\right)\frac{\partial}{\partial N_{p}}\mathcal{D}_{m}(Q, P)d\sigma(P)\nonumber\\
&=\lim_{\epsilon\rightarrow 0}\int_{D\setminus B(Q, \epsilon)}\mathrm{div}\nabla\big(\mathcal{D}_{m}(Q, X)\big)dX\nonumber\\
&=\int_{D}\Delta\mathcal{D}_{m}(Q, X)dX\nonumber\\
&=\int_{D}\mathcal{D}_{m-1}(Q, X)dX\nonumber\\
&=\mathcal{N}_{m-1}(Q)\nonumber
\end{align}
in which Gauass's divergence theorem, and the following easy facts are used (by Lebesgue's dominated convergence theorem, the details are similar to the argument of Lemma 8.2):
\begin{equation}
\lim_{\epsilon\rightarrow 0}\int_{\partial D\cap B(Q, \epsilon)}\frac{\partial}{\partial N_{p}}\mathcal{D}_{m}(Q, P)d\sigma(P)=0
\end{equation}
and
\begin{align}
\lim_{\epsilon\rightarrow 0}\int_{D\setminus B(Q, \epsilon)}\mathrm{div}\nabla\big(\mathcal{D}_{m}(Q, X)\big)dX&=\lim_{\epsilon\rightarrow 0}\int_{D\setminus B(Q, \epsilon)}\Delta\mathcal{D}_{m}(Q, X)dX\\
&=\int_{D}\Delta\mathcal{D}_{m}(Q, X)dX\nonumber.
\end{align}
Therefore, by (8.17), (8.20) and (8.21), we have
\begin{equation*}
\int_{\partial D}\mathrm{K}^{*}_{m}f(P)d\sigma(P)=\int_{\partial D}\mathcal{N}_{m-1}(Q)f(Q)d\sigma(Q)=0.
\end{equation*}
\end{proof}

\begin{thm}
Let $D$ be a bounded Lipschitz domain, and $\mathrm{K}_{m}$, $m\geq 2$ be the same as in Theorem 3.5, then $\mathrm{K}_{m}: L^{p}(\partial D)\rightarrow L^{p}(\partial D)$ is bounded for $1\leq p\leq\infty$.
\end{thm}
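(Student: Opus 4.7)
The plan is to mimic the three step argument already used for $\mathrm{K}^{*}_{m}$ in Theorem 8.9: obtain the bounds on $L^{1}(\partial D)$ and $L^{\infty}(\partial D)$ directly from the Schur type estimates established in Lemma 8.2, and then invoke the Riesz--Thorin interpolation theorem to fill in the range $1<p<\infty$. Since $D$ is a bounded Lipschitz domain, we are in the regime of this section where $K_{m}^{(j)}=D_{m}^{(j)}$, so that
\begin{equation*}
\mathrm{K}_{m}f(P)=\int_{\partial D}\langle D_{m}(P,Q),n_{Q}\rangle f(Q)\,d\sigma(Q),\qquad P\in\partial D.
\end{equation*}

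First, for $L^{\infty}(\partial D)$, by Proposition 2.6 (or equivalently by the explicit formula in Lemma 2.2, which is antisymmetric in $(x,v)$ in virtue of the factor $x_{j}-v_{j}$) we have $\langle D_{m}(P,Q),n_{Q}\rangle = -\langle D_{m}(Q,P),n_{Q}\rangle$ in absolute value. Hence
\begin{equation*}
|\mathrm{K}_{m}f(P)|\le\|f\|_{L^{\infty}(\partial D)}\int_{\partial D}\bigl|\langle D_{m}(Q,P),n_{Q}\rangle\bigr|\,d\sigma(Q),
\end{equation*}
and after the cosmetic relabelling $Q\leftrightarrow P$ inside the integral, estimate (8.1) of Lemma 8.2 gives $\|\mathrm{K}_{m}f\|_{L^{\infty}(\partial D)}\le C\|f\|_{L^{\infty}(\partial D)}$.

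Second, for $L^{1}(\partial D)$, Fubini's theorem yields
\begin{equation*}
\|\mathrm{K}_{m}f\|_{L^{1}(\partial D)}\le\int_{\partial D}|f(Q)|\Bigl(\int_{\partial D}\bigl|\langle D_{m}(P,Q),n_{Q}\rangle\bigr|\,d\sigma(P)\Bigr)d\sigma(Q),
\end{equation*}
and the inner integral is uniformly bounded in $Q$ by estimate (8.2) of Lemma 8.2, which delivers $\|\mathrm{K}_{m}f\|_{L^{1}(\partial D)}\le C\|f\|_{L^{1}(\partial D)}$.

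Finally, Riesz--Thorin interpolation between these two endpoint bounds produces the desired estimate for every $1<p<\infty$, concluding the proof. There is no genuine obstacle here: both off diagonal weak singularities $|P-Q|^{2m-(n+2)}(1+|\log|P-Q||)$ are integrable on $\partial D$ since $m\ge 2$, so the only real content is the already verified Schur conditions in Lemma 8.2; the essentially identical calculation for $\mathrm{K}^{*}_{m}$ in Theorem 8.9 illustrates that everything goes through without further subtlety.
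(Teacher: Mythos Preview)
Your proof is correct and follows precisely one of the two routes the paper itself indicates: the paper's proof reads ``By duality in terms of Theorem 8.10, or directly verify by a similar argument to Theorem 8.10 by invoking Lemma 8.2,'' and you have carried out the latter (Schur test at the endpoints plus Riesz--Thorin), exactly as in the argument for $\mathrm{K}^{*}_{m}$. Aside from a harmless numbering shift (in the paper the $\mathrm{K}^{*}_{m}$ result is Theorem 8.10 and the present statement is Theorem 8.12), your argument matches the paper's intended direct verification; the duality alternative is simply the observation that $\mathrm{K}_{m}=(\mathrm{K}^{*}_{m})^{*}$, which gives the same conclusion without redoing the Schur estimates.
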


\begin{proof}
By duality in term of Theorem 8.10, or directly verify by a similar argument to Theorem 8.10 by invoking Lemma 8.2.
\end{proof}

\begin{thm}
Let $D$ be a bounded Lipschitz domain, and $M_{j}$, $j\geq 2$ be the $j$th layer $\mathcal{D}$-potential, then $M_{j}: L^{p}(\partial D)\rightarrow L^{p}(D)$ is bounded for $1\leq p\leq\infty$.
\end{thm}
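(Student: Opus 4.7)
The plan is to apply a Schur-type test directly, mirroring the reasoning in Theorem 8.10 and Theorem 8.11 but integrating in $X$ over the $(n+1)$-dimensional domain $D$ instead of over $\partial D$. Throughout, recall that in the bounded case one has $\mathrm{S.P.}[D_m^{(j)}]\equiv 0$, so $K_j(X,Q)=D_j(X,Q)$ and the pointwise bound
\begin{equation*}
|\langle K_j(X,Q),n_Q\rangle|\;\le\; C_{j,n}\,|X-Q|^{2j-(n+2)}\bigl(1+\bigl|\log|X-Q|\bigr|\bigr),\qquad j\ge 2,
\end{equation*}
is available from (5.17) and the explicit form (2.10)--(2.11). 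The two Schur-type conditions I would verify are
\begin{equation*}
(\mathrm{A})\quad \sup_{X\in D}\int_{\partial D}|\langle K_j(X,Q),n_Q\rangle|\,d\sigma(Q)<C,\qquad
(\mathrm{B})\quad \sup_{Q\in\partial D}\int_{D}|\langle K_j(X,Q),n_Q\rangle|\,dX<C.
\end{equation*}
Condition (A) is exactly Lemma 8.3 / Remark 8.4. Given (A) and (B), a standard Schur argument (or equivalently, interpolation between the trivial $L^\infty\to L^\infty$ and $L^1\to L^1$ bounds by Riesz--Thorin/Marcinkiewicz) yields boundedness of $M_j$ from $L^p(\partial D)$ to $L^p(D)$ for every $1\le p\le\infty$.

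For the $L^\infty$ endpoint, (A) gives at once
\begin{equation*}
\|M_j f\|_{L^\infty(D)}\le \|f\|_{L^\infty(\partial D)}\sup_{X\in D}\int_{\partial D}|\langle K_j(X,Q),n_Q\rangle|\,d\sigma(Q)\le C\|f\|_{L^\infty(\partial D)}.
\end{equation*}
For the $L^1$ endpoint, Fubini's theorem together with (B) yields
\begin{equation*}
\|M_j f\|_{L^1(D)}\le \int_{\partial D}|f(Q)|\left(\int_D|\langle K_j(X,Q),n_Q\rangle|\,dX\right)d\sigma(Q)\le C\|f\|_{L^1(\partial D)}.
\end{equation*}
Interpolating then covers all $1<p<\infty$. (Alternatively, one can apply Minkowski's integral inequality as in Theorem 3.11 and estimate the resulting $L^p$ norm of the kernel in $X$ using (B).)

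The only piece not already recorded in the excerpt is (B). I would prove it by direct computation, in the same spirit as Lemma 8.2: for fixed $Q\in\partial D$, absorb the logarithm into a small polynomial loss, getting
\begin{equation*}
\int_D |\langle K_j(X,Q),n_Q\rangle|\,dX\le C_{j,n,\mathrm{diam}(D)}\int_D\frac{dX}{|X-Q|^{(n+2-2j)+\eta}},
\end{equation*}
for some small $0<\eta<1$. Since $j\ge 2$, the exponent $(n+2-2j)+\eta\le n-2+\eta<n+1$, so the singularity is integrable over the $(n+1)$-dimensional region $D$; splitting into $D\cap B(Q,d/2)$ and its complement (with $d$ a positive lower bound on the distance from the centers $Q_\ell$ to the boundary of the cylinders $L_\ell$) and bounding the near-diagonal piece by spherical coordinates in $\mathbb{R}^{n+1}$ centered at $Q$ gives a constant depending only on $j,n,\eta$ and the Lipschitz character and diameter of $D$, uniformly in $Q\in\partial D$. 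This is essentially a volume integral version of the surface-integral argument in Lemma 8.2, and in fact simpler since no surface parametrization is required; it is the main (but routine) calculational step. Having established (A) and (B), the theorem follows as above.
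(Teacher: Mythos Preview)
Your proof is correct and follows essentially the same route as the paper: establish the $L^\infty$ and $L^1$ endpoint bounds via Schur-type kernel estimates and then interpolate by Riesz--Thorin. The paper's own proof simply reads ``By Lemma 8.3 and the Riesz--Thorin interpolation theorem of operators, it is similar to Theorem 8.10,'' so you are in fact more careful than the paper: Lemma 8.3 only records condition (A) (the integral over $\partial D$ with $X\in D$ fixed), and the dual condition (B) (the volume integral over $D$ with $Q\in\partial D$ fixed) needed for the $L^1$ endpoint is left implicit there, whereas you spell it out and verify it directly. Your observation that (B) is actually easier than (A), since the singularity $|X-Q|^{-(n-2)-\eta}$ is integrable over an $(n+1)$-dimensional ball without any surface parametrization, is exactly right.
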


\begin{proof}
By Lemma 8.3 and the Riesz-Thorin interpolation theorem of operators, it is similar to Theorem 8.10.
\end{proof}

\begin{thm}
Let $D$ be a bounded Lipschitz domain, and $\mathcal{M}_{j}$, $j\geq 1$ be the $j$th layer $\mathcal{S}$-potential, then $\mathcal{M}_{j}: L^{p}(\partial D)\rightarrow L^{p}(\partial D)$ is bounded for $1\leq p\leq\infty$.
\end{thm}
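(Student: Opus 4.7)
The plan is to imitate the strategy used for Theorems 8.10--8.12: establish the endpoint bounds $\mathcal{M}_j\colon L^{1}(\partial D)\to L^{1}(\partial D)$ and $\mathcal{M}_j\colon L^{\infty}(\partial D)\to L^{\infty}(\partial D)$ directly from a Schur-type kernel estimate, and then invoke the Riesz--Thorin interpolation theorem to cover the whole range $1<p<\infty$. The crucial fact that makes both endpoints accessible from a single one-sided uniform integral bound is the symmetry $\mathcal{K}_j(X,Y)=\mathcal{K}_j(Y,X)$ proved in Proposition 5.3, which in the bounded-domain setting (where $\mathcal{K}_j=\mathcal{D}_j$) gives $\mathcal{D}_j(P,Q)=\mathcal{D}_j(Q,P)$.

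For the $L^{\infty}$ endpoint I would estimate pointwise, for $P\in\partial D$,
\begin{equation*}
|\mathcal{M}_j f(P)|\le \|f\|_{L^{\infty}(\partial D)}\int_{\partial D}|\mathcal{D}_j(P,Q)|\,d\sigma(Q)\le C\,\|f\|_{L^{\infty}(\partial D)},
\end{equation*}
where the inner integral is controlled uniformly in $P\in\partial D$ by Lemma 8.5 (for $j\ge 2$) and by Remark 8.9 (for $j=1$, where the bound is classical for the Newtonian single layer potential). For the $L^{1}$ endpoint I would use Fubini together with the same kernel bound applied in the other variable:
\begin{equation*}
\int_{\partial D}|\mathcal{M}_j f(P)|\,d\sigma(P)\le \int_{\partial D}|f(Q)|\left(\int_{\partial D}|\mathcal{D}_j(P,Q)|\,d\sigma(P)\right)d\sigma(Q)\le C\,\|f\|_{L^{1}(\partial D)},
\end{equation*}
where by the symmetry of $\mathcal{D}_j$ the inner integral is again controlled uniformly in $Q\in\partial D$ by Lemma 8.5 (and Remark 8.9 for $j=1$). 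Applying Riesz--Thorin to the operator $\mathcal{M}_j$, viewed as a linear map between $L^{1}(\partial D)$ and $L^{\infty}(\partial D)$ at the endpoints, then yields boundedness $L^{p}(\partial D)\to L^{p}(\partial D)$ for every intermediate $1<p<\infty$.

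The only step requiring a moment's care is the case $j=1$, because Lemma 8.5 is stated only for $m\ge 2$. I would handle this by appealing to Remark 8.9, which explicitly asserts that the uniform integrability of $\mathcal{D}_m$ over $\partial D$ persists at $m=1$ (this is also the classical fact that the single-layer kernel $\mathcal{K}_1$ is weakly singular of order $n-1$ on the $n$-dimensional boundary, hence uniformly integrable). No further obstacle arises; the symmetry of $\mathcal{D}_j$ and the two-sided uniform integrability together make both endpoint estimates essentially automatic, and Riesz--Thorin does the rest. The proof is therefore parallel in structure to Theorem 8.12, with $\mathcal{D}_j$ replacing the kernel $\langle D_j(\cdot,\cdot),n_{\cdot}\rangle$ and Lemma 8.5 replacing Lemma 8.3.
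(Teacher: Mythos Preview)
Your proposal is correct and follows essentially the same route as the paper: the paper's proof simply says ``similar to Theorem 8.10 by using Lemma 8.5, the claims in Remark 8.9 and the interpolation of operators,'' which is exactly your Schur-test plus Riesz--Thorin argument, with the symmetry of $\mathcal{D}_j$ (which you make explicit) standing in for the two separate bounds (8.1)--(8.2) used in Theorem 8.10. One minor slip: in your closing analogy you say ``Lemma 8.5 replacing Lemma 8.3,'' but Theorem 8.12 actually cites Lemma 8.2; this does not affect the argument.
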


\begin{proof}
It is similar to Theorem 8.10 by using Lemma 8.5, the claims in Remark 8.9 and the interpolation of operators.
\end{proof}

\begin{thm}
Let $D$ be a bounded Lipschitz domain, and $\mathcal{M}_{j}$, $j\geq 1$ be the $j$th layer $\mathcal{S}$-potential, then $\mathcal{M}_{j}: L^{p}(\partial D)\rightarrow L^{p}(D)$ is bounded for $1\leq p\leq\infty$.
\end{thm}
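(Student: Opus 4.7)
The plan is to apply a Schur-type argument to the kernel representation
\begin{equation*}
\mathcal{M}_{j}f(X)=\int_{\partial D}\mathcal{D}_{j}(X,Q)f(Q)\,d\sigma(Q),\quad X\in D,
\end{equation*}
and then interpolate. The two Schur inputs are a uniform $L^{1}(\partial D)$ bound on $\mathcal{D}_{j}(X,\cdot)$ in $X\in D$, and a uniform $L^{1}(D)$ bound on $\mathcal{D}_{j}(\cdot,Q)$ in $Q\in\partial D$. The first one is precisely Lemma 8.6 (and its $m=1$ extension from Remark 8.9), so no new work is required there.

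The step that needs a fresh estimate is the volume-integral bound
\begin{equation*}
\sup_{Q\in\partial D}\int_{D}|\mathcal{D}_{j}(X,Q)|\,dX<\infty.
\end{equation*}
Using the explicit form of $\mathcal{D}_{j}$ from Lemma 5.1, one has $|\mathcal{D}_{j}(X,Q)|\le C_{j,n}|X-Q|^{2j-(n+1)}\bigl(1+|\log|X-Q||\bigr)$ for $j\geq 1$. Since $2j-(n+1)\geq 1-n$, passing to polar coordinates centered at $Q$ on the bounded set $D\subset B(Q,\mathrm{diam}(D))$ gives, for any small $\eta>0$,
\begin{equation*}
\int_{D}|X-Q|^{2j-(n+1)}\bigl(1+|\log|X-Q||\bigr)\,dX\le C\int_{0}^{\mathrm{diam}(D)}r^{2j-(n+1)-\eta}\,r^{n}\,dr,
\end{equation*}
and the latter integral is finite (for $\eta$ small) because the exponent $2j-1-\eta\geq 1-\eta>-1$. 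The bound is independent of $Q\in\partial D$, which is the desired Schur condition.

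With both Schur conditions in hand, the operator $\mathcal{M}_{j}$ is bounded from $L^{\infty}(\partial D)$ to $L^{\infty}(D)$ (by the first estimate applied pointwise) and from $L^{1}(\partial D)$ to $L^{1}(D)$ (by Fubini and the second estimate). The Riesz--Thorin interpolation theorem then yields $\mathcal{M}_{j}:L^{p}(\partial D)\to L^{p}(D)$ boundedly for every $1\le p\le\infty$. The only nonroutine step is the uniform volume estimate above; everything else is a direct mirror of the argument in Theorem 8.13, with $L^{p}(D)$ replacing $L^{p}(\partial D)$ on the target side and Lemma 8.6 replacing Lemma 8.5 on the kernel side.
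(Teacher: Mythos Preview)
Your proof is correct and follows essentially the same route as the paper: the paper's one-line proof (``similar to Theorem 8.10 by using Lemma 8.6, the claims in Remark 8.9 and the interpolation of operators'') is exactly the Schur-plus-interpolation scheme you spell out, and you simply make explicit the second Schur input (the uniform volume bound $\sup_{Q\in\partial D}\int_D|\mathcal{D}_j(X,Q)|\,dX<\infty$) that the paper leaves implicit. One minor point: the result is Theorem~8.15 in the paper's numbering, not 8.14.
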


\begin{proof}
It is similar to Theorem 8.10 by using Lemma 8.6, the claims in Remark 8.9 and the interpolation of operators.
\end{proof}

\begin{thm}
Let $D$ be a bounded Lipschitz domain, and $\mathcal{M}_{j}$, $j\geq 2$ be the $j$th layer $\mathcal{S}$-potential, then $\nabla\mathcal{M}_{m}: L^{p}(\partial D)\rightarrow L^{p}(D)$ is bounded for $1\leq p\leq\infty$.
\end{thm}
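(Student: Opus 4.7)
The plan is to reduce to the kernel $D_j$, which is the gradient of $\mathcal{D}_j$ by Theorem 5.4, and then apply Schur's test in the form appropriate to the pair of measure spaces $(D, dX)$ and $(\partial D, d\sigma)$.

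First, I would justify differentiation under the integral sign. Since $\mathcal{D}_j(\cdot, Q) \in C^{\infty}(\mathbb{R}^{n+1}\setminus\{Q\})$ and, by Theorem 5.4, $\nabla_{X}\mathcal{D}_j(X, Q) = D_j(X, Q)$ with the pointwise bound $|D_j(X,Q)| \leq C_{j,n}\,|X-Q|^{2j-(n+2)}(1+\bigl|\log|X-Q|\bigr|)$ coming from Lemma 2.2, the singularity is locally integrable in $\mathbb{R}^{n+1}$ as soon as $j\geq 1$, and a routine dominated-convergence argument (a bounded-domain analogue of Lemma 4.1) gives
\[
\nabla \mathcal{M}_{j} f(X) \;=\; \int_{\partial D} D_{j}(X, Q)\, f(Q)\, d\sigma(Q), \qquad X\in D.
\]

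Next, I would verify the two ingredients of Schur's test for the integral operator with kernel $K(X,Q) = |D_j(X,Q)|$. The first hypothesis, $\sup_{X\in D}\int_{\partial D}|D_j(X,Q)|\,d\sigma(Q) \leq C_1$, is exactly the content of Lemma 8.8 (applied component-wise, since $\nabla \mathcal{D}_j = D_j$). The second hypothesis, $\sup_{Q\in \partial D}\int_{D}|D_j(X,Q)|\,dX \leq C_2$, follows from the pointwise bound above and a change to polar coordinates centered at $Q$: with $r = |X-Q|$ and $dX = r^{n}\,dr\,d\omega$,
\[
\int_{D}|D_j(X,Q)|\,dX \;\leq\; C_{j,n}\,\omega_{n}\int_{0}^{\operatorname{diam}(D)} r^{2j-2}\bigl(1+|\log r|\bigr)\,dr,
\]
which is finite since $2j-2\geq 2 > -1$ and the logarithmic factor is integrable at the origin.

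With both Schur bounds in hand, the standard Schur interpolation yields
\[
\|\nabla \mathcal{M}_{j} f\|_{L^{p}(D)} \;\leq\; C_{1}^{1/p'}\, C_{2}^{1/p}\,\|f\|_{L^{p}(\partial D)}
\]
for every $1\leq p \leq \infty$, with the endpoints $p=\infty$ and $p=1$ handled directly by H\"older/Fubini using the first and second Schur bounds respectively. The main obstacle is really only verifying the second Schur bound, namely estimating the solid integral $\int_{D}|D_j(X,Q)|\,dX$ uniformly in $Q\in\partial D$; this is slightly different from Lemma 8.8 (which integrates over $\partial D$), but once one works in polar coordinates around $Q$ the integrability threshold improves by a full power of $r$, and the argument becomes immediate. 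The rest is a formal application of Schur's test that parallels the pattern already used in Theorems 8.12--8.14.
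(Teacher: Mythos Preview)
Your proof is correct and follows essentially the same approach as the paper, which simply states ``similar to Theorem 8.10 by using Lemma 8.8 and the interpolation of operators.'' Your Schur-test argument is equivalent to the paper's endpoint-plus-interpolation method; the only additional detail you supply is the explicit verification of the second Schur bound $\sup_{Q\in\partial D}\int_D |D_j(X,Q)|\,dX < \infty$ via polar coordinates, a step the paper leaves implicit but which is indeed needed for the $L^1\!\to\!L^1$ endpoint.
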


\begin{proof}
It is similar to Theorem 8.10 by using Lemma 8.8 and the interpolation of operators.
\end{proof}

\begin{rem}
By Lemma 8.8 and the statements in Remarks 8.4, 8.7and 8.9, in fact, by performing a similar argument to Theorem 8.10, we have that all the operators $M_{j}$ and $\nabla\mathcal{M}_{j}$ are bounded from $L^{p}(\partial D)$ to $L^{p}(\overline{D})$ for any $j\geq 2$ and $1\leq p\leq\infty$, whereas $\mathcal{M}_{j}: L^{p}(\partial D)\rightarrow L^{p}(\overline{D})$ is bounded for any $j\geq 1$ and $1\leq p\leq\infty$.
\end{rem}

The following lemma is crucial to the non-tangential maximal estimates of solutions for the $L^{p}$ polyharmonic BVPs discussing in this section, whose analogue is also significant to the corresponding estimates of the Dirichlet and Neumann problems in $L^{p}$ for Laplace's equation (see \cite{dk,dk1}).

\begin{thm}
Let $D$ be a bounded Lipschitz domain with the coordinate systems $(L_{j},Q_{j}), \varphi_{j}$ and $\pi_{j}$ as the same as in the proof of Lemma 8.2, $M_{m}$, $m\geq 1$ be the $j$th layer $\mathcal{D}$-potential. If $X\in L_{j_{0}}\cap D$ for some $1\leq j_{0}\leq s$, set $P\in \partial D\cap L_{j_{0}}$ with $\pi_{j_{0}}(X)=\pi_{j_{0}}(P)$, and $\rho=|X-P|$, then for any $f\in L^{p_{m}}(\partial D)$,
\begin{equation}
|M_{m}f(X)-(\mathrm{K}_{m})_{\rho}f(P)|\leq CM^{*}f(P)
\end{equation}
where
\begin{equation}
(\mathrm{K}_{m})_{\rho}f(P)=\int_{\partial D\setminus B_{\rho}(P)}\langle D_{m}(P, Q), n_{Q}\rangle f(Q)d\sigma(Q),
\end{equation}
the maximal function $M^{*}f$ is defined as follows
\begin{equation}
M^{*}f(P)=\sup_{r>0}\left[\frac{1}{\sigma\big(\partial D\cap B_{r}(P)\big)}\int_{\partial D\cap B_{r}(P)}|f(Q)|d\sigma(Q)\right],\,\,P\in \partial D
\end{equation}
and
\begin{equation}
p_{m}\in\begin{cases}
(1, \infty), \,\,m=1;\vspace{2mm}\\
[1, \infty], \,\,m\geq 2.
\end{cases}
\end{equation}
\end{thm}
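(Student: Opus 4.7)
The plan is to split the integral defining $M_m f(X)$ about the boundary ball $B_\rho(P)$, and compare with the truncated integral $(\mathrm{K}_m)_\rho f(P)$ piece by piece. Since in the bounded case of this section $K_m^{(j)} = D_m^{(j)}$, we write
$$M_m f(X) - (\mathrm{K}_m)_\rho f(P) = I(X) + II(X),$$
where
$$I(X) = \int_{\partial D \cap B_\rho(P)} \langle D_m(X,Q), n_Q\rangle f(Q)\, d\sigma(Q),$$
$$II(X) = \int_{\partial D \setminus B_\rho(P)} \langle D_m(X,Q) - D_m(P,Q), n_Q\rangle f(Q)\, d\sigma(Q),$$
and prove each of $|I(X)|, |II(X)| \leq C M^* f(P)$ separately.

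For the local piece $I(X)$, the crucial geometric input is that $\pi_{j_0}(X) = \pi_{j_0}(P)$ places $X$ nontangentially above $P$ in the Lipschitz-graph coordinates $(L_{j_0}, Q_{j_0}, \varphi_{j_0})$, so that $\mathrm{dist}(X,\partial D) \geq \rho / \sqrt{1+\mathcal{L}^2}$, and hence $|X-Q| \geq c\rho$ for every $Q \in \partial D$; combined with $|X-Q| \leq |X-P| + |P-Q| \leq 2\rho$ for $Q \in B_\rho(P)$, this yields $|X-Q| \sim \rho$ on the local region. For $m \geq 2$, the explicit expression in Lemma~2.2 gives $|\langle D_m(X,Q), n_Q\rangle| \leq C\rho^{2m-n-2}(1+|\log\rho|)$; together with $\sigma(\partial D \cap B_\rho(P)) \lesssim \rho^n$ and the definition of $M^* f(P)$, we get $|I(X)| \leq C\rho^{2m-2}(1+|\log\rho|) M^* f(P) \leq C M^* f(P)$, uniformly because $\rho \leq \mathrm{diam}(D)$. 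For $m=1$, we use instead the identity $(X-Q)\cdot n_Q = (X-P)\cdot n_Q + (P-Q)\cdot n_Q$ together with the graph-Lipschitz bound $|(P-Q)\cdot n_Q| \leq C|P-Q|$ to deduce $|\langle D_1(X,Q), n_Q\rangle| \leq C(\rho + |P-Q|)/|X-Q|^{n+1} \leq C\rho^{-n}$ on the local region, whence $|I(X)| \leq C M^* f(P)$.

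For the far piece $II(X)$, the mean value theorem applied along the segment from $P$ to $X$ gives $|D_m(X,Q) - D_m(P,Q)| \leq \rho \sup_{Z \in [P,X]} |\nabla_Z D_m(Z,Q)|$, and the same nontangential geometry yields $|Z-Q| \geq c|P-Q|$ for $Z \in [P,X]$ and $|P-Q| > \rho$. Differentiating the explicit formula for $D_m^{(j)}$ gives $|\nabla_Z D_m(Z,Q)| \leq C|P-Q|^{2m-n-3}(1+|\log|P-Q||)$ for $m\geq 2$ (and $\leq C|P-Q|^{-(n+1)}$ for $m=1$, after using the additional refinement $|1/|X-Q|^{n+1} - 1/|P-Q|^{n+1}| \leq C\rho / |P-Q|^{n+2}$ to recover the gain of $\rho$). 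Decomposing $\{Q \in \partial D : |P-Q| > \rho\}$ into dyadic shells $A_k = \{Q : 2^k\rho < |P-Q| \leq 2^{k+1}\rho\}$, $0 \leq k \leq \log_2(\mathrm{diam}(D)/\rho)$, and invoking $\sigma(A_k \cap \partial D) \lesssim (2^k\rho)^n$ along with the maximal function definition, the contribution of $A_k$ is $\lesssim \rho\cdot(2^k\rho)^{2m-3}(1+|\log(2^k\rho)|) M^*f(P)$. Summation over $k$ (geometric for $m=1$, and otherwise telescoped against $\rho \leq \mathrm{diam}(D)$) gives $|II(X)| \leq C M^* f(P)$.

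The main technical obstacle is the $m=1$ case, where the kernel is a genuine Calderón--Zygmund singular integral on the Lipschitz surface $\partial D$; both the local and far estimates require essential cancellation from the identities $(X-Q)\cdot n_Q = (X-P)\cdot n_Q + (P-Q)\cdot n_Q$ and $|Z-Q|^{-(n+1)} - |P-Q|^{-(n+1)} = O(\rho|P-Q|^{-(n+2)})$ in order to extract the extra factor of $\rho$ that makes the dyadic sum converge. For $m \geq 2$ the kernel is only weakly singular (or locally integrable) and the argument reduces to direct size estimates, with the extra positive powers of $2^k\rho$ being harmlessly controlled by $\mathrm{diam}(D)$; the constants throughout depend only on $m$, $n$, the Lipschitz character $\mathcal{L}$ and the geometric parameters $d$, $\rho$, $\mathrm{diam}(D)$ of the covering, i.e.\ on $m$, $n$, $D$.
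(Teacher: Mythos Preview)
Your argument is correct and is precisely the standard Dahlberg-type estimate that the paper invokes by citation: the paper's own proof consists only of the sentence ``It is due to Dahlberg in the case of $m=1$ (i.e., Proposition~1.1, \cite{dk}). To other cases, as the proof of Lemma~8.2, by invoking the local coordinates, it can be attained by a similar argument to Dahlberg's one.'' Your near/far splitting plus mean-value-theorem and dyadic-shell summation is exactly that argument written out, so the approaches coincide.

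Two small remarks on exposition. First, for the local piece with $m=1$, the decomposition $(X-Q)\cdot n_Q=(X-P)\cdot n_Q+(P-Q)\cdot n_Q$ is harmless but unnecessary: the trivial bound $|(X-Q)\cdot n_Q|\le|X-Q|$ already gives $|\langle D_1(X,Q),n_Q\rangle|\le C|X-Q|^{-n}\le C\rho^{-n}$ on $B_\rho(P)$. Second, your parenthetical about an ``additional refinement'' for the $m=1$ far piece is superfluous: once you have $|\nabla_Z D_1(Z,Q)|\le C|Z-Q|^{-(n+1)}$ and $|Z-Q|\ge c|P-Q|$ (which holds because $Z$ lies on the vertical segment above $P$ in graph coordinates, so the horizontal distance $|\underline p-\underline q|\ge |P-Q|/\sqrt{1+\mathcal L^2}$ already bounds $|Z-Q|$ from below), the mean value theorem directly gives $|\langle D_1(X,Q)-D_1(P,Q),n_Q\rangle|\le C\rho|P-Q|^{-(n+1)}$, and the dyadic sum $\sum_k 2^{-k}$ converges.
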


\begin{proof}
It is due to Dahlberg in the case of $m=1$ (i.e., Proposition 1.1, \cite{dk}). To other cases, as the proof of Lemma 8.2, by invoking the local coordinates, it can be attained by a similar argument to Dahlberg's one.
\end{proof}

\begin{thm}
Let $D$ be a bounded Lipschitz domain, $(\mathrm{K}_{m})_{\rho}$ be defined as (8.25). For any $f\in L^{p_{m}}(\partial D)$, define
the maximal operator
\begin{equation}
\mathrm{K}_{m}^{\#}f(P)=\sup_{\rho>0}|(\mathrm{K}_{m})_{\rho}f(P)|, \,\,P\in\partial D,
\end{equation}
then
\begin{equation}
\|\mathrm{K}_{m}^{\#}f\|_{L^{p_{m}}(\partial D)}\leq C\|f\|_{L^{p_{m}}(\partial D)},
\end{equation}
where $p_{m}$ is given by (8.27), and $C$ is a constant depending only on $m,n,p_{m}$ and $D$.
\end{thm}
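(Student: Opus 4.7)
The plan is to split according to whether $m=1$ or $m\geq 2$, since the two cases have rather different characters: $m=1$ is a genuinely singular Calder\'on--Zygmund operator on a Lipschitz surface, while $m\geq 2$ involves only a weakly singular (indeed integrable) kernel.

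For $m=1$, the kernel $\langle D_{1}(P,Q),n_{Q}\rangle = \omega_{n}^{-1}\langle Q-P,n_{Q}\rangle/|P-Q|^{n+1}$ is exactly the kernel of the classical double layer potential on the Lipschitz surface $\partial D$, and $(\mathrm{K}_{1})_{\rho}f$ is its standard $\rho$-truncation. Thus $\mathrm{K}_{1}^{\#}$ is the maximal truncated double layer operator on $\partial D$, and the required estimate for $1<p<\infty$ is the classical $L^{p}$-boundedness of this maximal operator. I would simply cite the Coifman--McIntosh--Meyer theorem on Cauchy integrals on Lipschitz curves \cite{cmm} together with its extension to Lipschitz surfaces via Calder\'on's rotation method and the Cotlar inequality, in the form already used by Dahlberg--Kenig--Verchota (\cite{dk,dk1,v0}; compare in particular the bounds around Proposition 1.1 of \cite{dk} which Theorem 8.18 has just invoked for the case $m=1$).

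For $m\geq 2$, the argument becomes elementary because the kernel is no longer singular. From the explicit form of $D_{m}^{(j)}$ in Lemma 2.2 one has
\begin{equation*}
\bigl|\langle D_{m}(P,Q),n_{Q}\rangle\bigr| \leq C_{m,n}\,|P-Q|^{2m-(n+2)}\bigl(1+\bigl|\log|P-Q|\bigr|\bigr),
\end{equation*}
and since $2m-(n+2)\geq 2-n > -(n-1)$ this kernel (with the log factor absorbed using $\lim_{t\to 0^{+}}t^{\eta}\log t=0$) is $L^{1}$ on $\partial D$ uniformly in one variable. Indeed, the estimate (8.3) in the proof of Lemma 8.2 together with Lemma 8.2 itself yield
\begin{equation*}
\sup_{P\in\partial D}\int_{\partial D}\bigl|\langle D_{m}(P,Q),n_{Q}\rangle\bigr|\,d\sigma(Q) < C(m,n,D),
\end{equation*}
and the transposed bound holds by the same argument (or directly from (8.2)). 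Because the truncation only \emph{removes} a ball, I have the pointwise domination
\begin{equation*}
\mathrm{K}_{m}^{\#}f(P) \;\leq\; \int_{\partial D}\bigl|\langle D_{m}(P,Q),n_{Q}\rangle\bigr|\,|f(Q)|\,d\sigma(Q) \;=:\; \mathcal{T}|f|(P),
\end{equation*}
valid uniformly in $\rho>0$. Schur's test then gives $\|\mathcal{T}\|_{L^{p}\to L^{p}}\leq C$ for every $1\leq p\leq\infty$, and the theorem for $m\geq 2$ follows immediately by taking $L^{p_{m}}$-norms.

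The main obstacle is genuinely the $m=1$ case, since the $L^{p}$-boundedness of the maximal truncated double layer operator on a Lipschitz surface is a deep fact relying on the Coifman--McIntosh--Meyer theorem and Cotlar-type inequalities; however it is by now entirely classical and directly applicable here, so in writing this section I would invoke it rather than reprove it. The $m\geq 2$ case, by contrast, is essentially a Young/Schur argument and requires no singular-integral machinery because the weak singularity $|P-Q|^{2m-(n+2)}$ (together with a harmless log) is integrable on the $n$-dimensional surface $\partial D$.
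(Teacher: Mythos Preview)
Your proof is correct and matches the paper's approach: the paper likewise treats $m=1$ by citing the deep classical boundedness of the maximal truncated double layer on Lipschitz surfaces, and for $m\geq 2$ invokes Lemma 8.2 together with interpolation, which is precisely your pointwise domination $\mathrm{K}_m^{\#}f\leq \mathcal{T}|f|$ plus Schur's test (equivalently, the $L^1$/$L^\infty$ endpoint bounds from (8.1)--(8.2) and Riesz--Thorin). Your write-up is in fact more explicit than the paper's one-line proof about the pointwise domination step, but the underlying argument is identical.
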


\begin{proof}
The case of $m=1$ is a deep and classical result \cite{dk,gr2,st2}. By Lemma 8.2 and the interpolation of operators, other cases follows.
\end{proof}

\begin{thm}
Let $D$ be a bounded Lipschitz domain, $M_{m}$, $m\geq 1$ be the $j$th layer $\mathcal{D}$-potential, then for any $f\in L^{p}(\partial D)$ with $1<p<\infty$,
\begin{equation}
\|M(M_{m}f)\|_{L^{p}(\partial D)}\leq C\|f\|_{L^{p}(\partial D)},
\end{equation}
where $M(\cdot)$ is the nontangential maximal function given by (1.4), and $C$ is a constant depending only on $m,n,p$ and $D$.
\end{thm}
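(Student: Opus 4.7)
The plan is to derive the pointwise domination
\[
M(M_m f)(P_0) \leq \mathrm{K}_m^{\#} f(P_0) + C\, M^* f(P_0) \quad \text{for a.e. } P_0 \in \partial D,
\]
where $M^*$ denotes the surface Hardy--Littlewood maximal operator on $\partial D$, and then invoke Theorem 8.18 (for $\mathrm{K}_m^{\#}$) together with the classical $L^p$ boundedness of $M^*$, valid for $1<p<\infty$. This parallels Dahlberg's classical argument for the double-layer potential, which corresponds to the case $m=1$.

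To establish the pointwise estimate, fix $P_0 \in \partial D$ and $X \in \Gamma_\gamma(P_0)$, put $r = |X-P_0|$ (so that $\mathrm{dist}(X,\partial D) \geq r/\gamma$), and split
\[
M_m f(X) = \int_{\partial D \cap B_{2r}(P_0)} \!\!\langle D_m(X,Q), n_Q\rangle f(Q)\, d\sigma(Q) + \int_{\partial D \setminus B_{2r}(P_0)} \!\!\langle D_m(X,Q), n_Q\rangle f(Q)\, d\sigma(Q) =: I(X) + II(X).
\]
For the local piece $I(X)$, the size bound $|D_m(X,Q)|\leq C|X-Q|^{2m-(n+2)}\bigl(1+|\log|X-Q|\,|\bigr)$ from Lemma 2.2, the lower bound $|X-Q|\gtrsim r$ on this region, and the boundedness of $D$ (which absorbs positive powers of $r$ and logarithmic factors when $m\geq 2$), together with the surface measure estimate on $\partial D \cap B_{2r}(P_0)$, give $|I(X)| \leq C\, M^* f(P_0)$. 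For the far piece $II(X)$, I would compare with the truncated boundary operator
\[
(\mathrm{K}_m)_{2r} f(P_0) = \int_{\partial D \setminus B_{2r}(P_0)} \langle D_m(P_0,Q), n_Q\rangle f(Q)\, d\sigma(Q),
\]
bounding the difference via a Hörmander-type gradient estimate on $D_m(\cdot, Q)$ valid when $|X-P_0|\leq \tfrac12|Q-P_0|$; a dyadic decomposition of $\partial D \setminus B_{2r}(P_0)$ then yields $|II(X)-(\mathrm{K}_m)_{2r}f(P_0)|\leq C\,M^* f(P_0)$. Consequently $|M_m f(X)| \leq \mathrm{K}_m^{\#} f(P_0) + C\, M^* f(P_0)$ for every $X \in \Gamma(P_0)$, which gives the pointwise inequality after taking the supremum.

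Taking $L^p$ norms and applying Theorem 8.18 together with the Hardy--Littlewood maximal theorem yields
\[
\|M(M_m f)\|_{L^p(\partial D)} \leq \|\mathrm{K}_m^{\#} f\|_{L^p(\partial D)} + C\,\|M^* f\|_{L^p(\partial D)} \leq C\,\|f\|_{L^p(\partial D)},
\]
as desired.

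The main obstacle is verifying the Hörmander-type smoothness estimate for $D_m$ uniformly across the various cases enumerated in Lemma 2.2, and in particular controlling the logarithmic factors that arise for odd $n$ with $m \geq (n+3)/2$, so that $|D_m(X,Q)-D_m(P_0,Q)|$ remains integrable against $|f|$ after summation over dyadic annuli centered at $P_0$. Because $D$ is bounded and $\partial D$ compact, these logarithmic and polynomial factors can be absorbed into the constants, as in the proof of Lemma 8.2. The base case $m=1$, where the kernel is a genuine Calderón--Zygmund singular integral, is the most delicate and follows Dahlberg's original $1977$ argument.
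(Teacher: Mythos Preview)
Your approach is correct and essentially identical to the paper's: the paper's proof simply invokes Theorem~8.18 (the pointwise comparison $|M_m f(X) - (\mathrm{K}_m)_\rho f(P)| \leq C\, M^* f(P)$, which you re-derive in detail) and Theorem~8.19 (the $L^p$ boundedness of $\mathrm{K}_m^{\#}$), together with the Hardy--Littlewood maximal theorem on $\partial D$. One minor correction: your citation for the $\mathrm{K}_m^{\#}$ bound should be Theorem~8.19, not Theorem~8.18.
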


\begin{proof}
Since $M^{*}: L^{p}(\partial D)\rightarrow L^{p}(\partial D)$ is bounded for any $1<p<\infty$ (e.g., see \cite{st2}), then by Theorems 8.18 and 8.19, (8.30) follows immediately. The case of $m=1$ is classical.
\end{proof}

However, the multi-layer $S$-potentials version of Lemmas 8.18-8.20 is the following

\begin{thm}
Let $D$ be a bounded Lipschitz domain with the coordinate systems $(L_{j},Q_{j}), \varphi_{j}$ and $\pi_{j}$ as the same as in the proof of Lemma 8.2, $\mathcal{M}_{m}$, $m\geq 1$ be the $j$th layer $\mathcal{S}$-potential. If $X\in L_{j_{0}}\cap D$ for some $1\leq j_{0}\leq s$, set $P\in \partial D\cap L_{j_{0}}$ with $\pi_{j_{0}}(X)=\pi_{j_{0}}(P)$, and $\rho=|X-P|$, then for any $f\in L^{p_{m}}(\partial D)$,
\begin{equation}
|\nabla\mathcal{M}_{m}f(X)-(\widetilde{\mathrm{K}}_{m})_{\rho}f(P)|\leq CM^{*}f(P)
\end{equation}
where
\begin{equation}
(\widetilde{\mathrm{K}}_{m})_{\rho}f(P)=\int_{\partial D\setminus B_{\rho}(P)}\nabla\mathcal{D}_{m}(P, Q)f(Q) d\sigma(Q),
\end{equation}
the maximal function $M^{*}f$ are defined by (8.26),
$\nabla$ is the gradient operator and
$p_{m}$ is given by (8.27).
\end{thm}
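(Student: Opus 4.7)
The plan is to mimic the proof of Theorem 8.18 but for single-layer gradients instead of double-layer potentials. First I will invoke the identity $\nabla_X\mathcal{D}_m(X,Q)=D_m(X,Q)$ from Theorem 5.4 (in this bounded setting the singular parts vanish by convention) so that
$$\nabla\mathcal{M}_m f(X)=\int_{\partial D} D_m(X,Q)\,f(Q)\,d\sigma(Q),\qquad (\widetilde{\mathrm{K}}_m)_\rho f(P)=\int_{\partial D\setminus B_\rho(P)} D_m(P,Q)\,f(Q)\,d\sigma(Q).$$
The difference then splits as $\nabla\mathcal{M}_m f(X)-(\widetilde{\mathrm{K}}_m)_\rho f(P)=\mathrm{I}(X,P)+\mathrm{II}(X,P)$, where $\mathrm{I}$ is the integral of $D_m(X,Q)f(Q)$ over $\partial D\cap B_\rho(P)$ and $\mathrm{II}$ is the integral of $[D_m(X,Q)-D_m(P,Q)]f(Q)$ over $\partial D\setminus B_\rho(P)$. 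The remainder of the argument will bound each piece by $C\,M^*f(P)$.

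For $\mathrm{I}$, working in the cylinder coordinates $(L_{j_0},Q_{j_0},\varphi_{j_0})$ from the proof of Lemma 8.2, the hypothesis $\pi_{j_0}(X)=\pi_{j_0}(P)$ together with the Lipschitz bound on $\varphi_{j_0}$ forces $|X-Q|\geq c_L\rho$ for all $Q\in\partial D$, with $c_L$ depending only on the Lipschitz constant $L$. Combined with the size estimate $|D_m(X,Q)|\leq C_{m,n}|X-Q|^{2m-(n+2)}(1+|\log|X-Q||)$, the bound $|X-Q|\leq 2\rho$ on $B_\rho(P)$, and the standard surface-measure bound $\sigma(\partial D\cap B_\rho(P))\leq C_L\rho^n$, this yields $|\mathrm{I}(X,P)|\leq C\,M^*f(P)$. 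For $\mathrm{II}$, the mean value theorem applied to $D_m(\cdot,Q)$ along the segment from $P$ to $X$ (where $|Y-Q|\asymp|P-Q|$ once $|P-Q|\geq\rho$) gives
$$|D_m(X,Q)-D_m(P,Q)|\leq C_{m,n}\,\rho\,|P-Q|^{2m-(n+3)}(1+|\log|P-Q||),$$
and a dyadic decomposition $A_k=\{2^k\rho\leq|P-Q|<2^{k+1}\rho\}$ combined with $\int_{A_k}|f|\,d\sigma\leq C(2^k\rho)^n M^*f(P)$ produces a geometrically convergent sum for $m=1$ and a finite sum truncated at $k\leq\log_2(\mathrm{diam}\,D/\rho)$ for $m\geq 2$; either way $|\mathrm{II}(X,P)|\leq C\,M^*f(P)$.

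The main obstacle is the case $m=1$, where $\nabla\mathcal{D}_1$ is a genuine Calder\'on--Zygmund singular integral and the near-part estimate must be handled via the classical truncated principal-value machinery. This case is exactly Dahlberg's Proposition 1.1 in \cite{dk}, whose bounded-Lipschitz-domain adaptation via a finite cover by coordinate cylinders (as in the proof of Lemma 8.2, already invoked by the authors for Theorem 8.18) applies here verbatim, with the scalar kernel $\langle D_1(X,Q),n_Q\rangle$ appearing in $M_1$ replaced by the vector kernel $D_1(X,Q)$ appearing in $\nabla\mathcal{M}_1$. For $m\geq 2$ the kernels are essentially integrable and the argument above is elementary, hidden inside the computations already carried out in Lemmas 8.2 and 8.8.
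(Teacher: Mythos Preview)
Your proposal is correct and follows essentially the same approach as the paper: the paper's proof is simply ``It is similar to Theorem 8.18,'' and Theorem 8.18 in turn defers the case $m=1$ to Dahlberg's Proposition~1.1 in \cite{dk} and handles $m\geq 2$ by the local-coordinate argument of Lemma~8.2. You have accurately unpacked this, using the identity $\nabla_X\mathcal{D}_m=D_m$ from (5.17), the near/far splitting, the size and mean-value estimates on $D_m$, and the dyadic summation, exactly as Dahlberg's scheme prescribes.
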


\begin{proof}
It is similar to Theorem 8.18.
\end{proof}

\begin{thm}
Let $D$ be a bounded Lipschitz domain, $(\widetilde{\mathrm{K}}_{m})_{\rho}$ be defined as (8.32). For any $f\in L^{p_{m}}(\partial D)$, set
the maximal operator
\begin{equation}
\widetilde{\mathrm{K}}_{m}^{\#}f(P)=\sup_{\rho>0}|(\widetilde{\mathrm{K}}_{m})_{\rho}f(P)|, \,\,P\in\partial D,
\end{equation}
then
\begin{equation}
\|\widetilde{\mathrm{K}}_{m}^{\#}f\|_{L^{p_{m}}(\partial D)}\leq C\|f\|_{L^{p_{m}}(\partial D)},
\end{equation}
where $p_{m}$ is given by (8.27), and $C$ is a constant depending only on $m,n,p_{m}$ and $D$.
\end{thm}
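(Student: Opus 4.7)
The plan is to split the proof into two cases that reflect the very different natures of the kernel $\nabla\mathcal{D}_m$.

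For the case $m=1$, by Theorem 5.4 we have $\nabla\mathcal{D}_1 = D_1$, and the components of $D_1$ are (up to the constant $-1/\omega_n$) precisely the Calder\'on--Zygmund kernels $\frac{P_j-Q_j}{|P-Q|^{n+1}}$. Hence $(\widetilde{\mathrm{K}}_1)_\rho$ is, componentwise, the truncated singular integral associated to the gradient of the fundamental solution of Laplace's equation on $\partial D$. The $L^p(\partial D)$-boundedness of its maximal function for $1<p<\infty$ follows from the Coifman--McIntosh--Meyer theorem on the boundedness of the Cauchy integral on Lipschitz curves, as extended to Lipschitz surfaces by Dahlberg, Kenig and Verchota \cite{cmm,dk,dk1,v0}. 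This disposes of the case $p_1 \in (1,\infty)$.

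For the case $m\geq 2$, I would use Theorem 5.4 again to write $\nabla\mathcal{D}_m = D_m$, and then invoke the explicit formulas (2.10)--(2.11), which give the pointwise bound
\begin{equation*}
|\nabla\mathcal{D}_m(P,Q)| \leq C_{m,n}|P-Q|^{2m-(n+2)}\bigl(1+\bigl|\log|P-Q|\bigr|\bigr),\quad P,Q\in\partial D.
\end{equation*}
Since $2m-(n+2)\geq -(n-2)$ when $m\geq 2$, this majorant is locally integrable on the $n$-dimensional set $\partial D$, and since $D$ is bounded the logarithmic factor is harmless. The key observation is that the truncation therefore does nothing useful: uniformly in $\rho>0$,
\begin{equation*}
\widetilde{\mathrm{K}}_m^{\#}f(P)\leq \int_{\partial D}|\nabla\mathcal{D}_m(P,Q)|\,|f(Q)|\,d\sigma(Q).
\end{equation*}
Lemma 8.8 gives $\sup_{P\in\partial D}\int_{\partial D}|\nabla\mathcal{D}_m(P,Q)|\,d\sigma(Q)<C$, which bounds the right-hand side on $L^\infty(\partial D)$; the symmetric bound $\sup_{Q\in\partial D}\int_{\partial D}|\nabla\mathcal{D}_m(P,Q)|\,d\sigma(P)<C$ comes from exactly the same local graph-coordinate argument as Lemma 8.2 (since the majorant is symmetric in $P,Q$), which gives $L^1(\partial D)$-boundedness. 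Riesz--Thorin interpolation then yields boundedness on $L^p(\partial D)$ for all $1\leq p\leq\infty$, matching the range $p_m\in[1,\infty]$.

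The only genuinely difficult step is the $m=1$ case, whose $L^2$ backbone is the Coifman--McIntosh--Meyer theorem together with Verchota's Rellich-type arguments adapted to Lipschitz domains; once this is granted, the passage to general $1<p<\infty$ is the standard Calder\'on--Zygmund maximal-function machinery. For $m\geq 2$ no singular integral theory is needed, since the kernel is weakly singular and the problem reduces to a Schur-type majorization controlled entirely by Lemma 8.8 and its symmetric counterpart.
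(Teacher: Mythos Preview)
Your proof is correct and follows essentially the same approach as the paper, which simply says ``Similar to Theorem 8.19'' (whose proof in turn cites the deep classical result for $m=1$ and invokes Lemma 8.2 plus interpolation for $m\geq 2$). Your version spells out the details that the paper leaves implicit: the identification $\nabla\mathcal{D}_m=D_m$ via (5.17), the reason why the truncation is harmless for $m\geq 2$, and the Schur/interpolation step using Lemma 8.8 together with the symmetric bound from the majorant (8.3).
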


\begin{proof}
Similar to Theorem 8.19.
\end{proof}

\begin{thm}
Let $D$ be a bounded Lipschitz domain, $\mathcal{M}_{m}$, $m\geq 1$ be the $j$th layer $\mathcal{S}$-potential, then for any $f\in L^{p}(\partial D)$ with $1<p<\infty$,
\begin{equation}
\|M(\nabla\mathcal{M}_{m}f)\|_{L^{p}(\partial D)}\leq C\|f\|_{L^{p}(\partial D)},
\end{equation}
where $\nabla$ is the gradient operator, $M(\cdot)$ is the nontangential maximal function given by (1.4), and $C$ is a constant depending only on $m,n,p$ and $D$.
\end{thm}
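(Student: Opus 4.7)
The plan is to mirror the proof of Theorem 8.20 almost verbatim, replacing the double-layer ingredients with the single-layer ingredients already established in Theorems 8.21--8.23. First I would reduce to a pointwise estimate: for any $P\in\partial D$ and $X\in\Gamma_{\gamma}(P)$, locate the coordinate chart $L_{j_{0}}$ containing $P$, let $P'\in\partial D\cap L_{j_{0}}$ be the point with $\pi_{j_{0}}(X)=\pi_{j_{0}}(P')$, and set $\rho=|X-P'|$. The non-tangential condition $|X-P|<\gamma\,\mathrm{dist}(X,\partial D)$ forces $|P-P'|\leq C_{\gamma}\rho$, and hence $M^{*}f(P')\leq C M^{*}f(P)$ after a standard enlargement of balls. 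By Theorem 8.21 we then have
\begin{equation}
|\nabla\mathcal{M}_{m}f(X)|\leq |(\widetilde{\mathrm{K}}_{m})_{\rho}f(P')|+CM^{*}f(P')\leq \widetilde{\mathrm{K}}_{m}^{\#}f(P')+CM^{*}f(P).
\end{equation}

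Taking the supremum over $X\in\Gamma_{\gamma}(P)$ and using once more the comparability of $P$ and $P'$ under the non-tangential constraint (both sides of the estimate are controlled by the corresponding maximal quantities at $P$ up to the geometric constant of $D$), one obtains the pointwise majorization
\begin{equation}
M(\nabla\mathcal{M}_{m}f)(P)\leq C\bigl[\widetilde{\mathrm{K}}_{m}^{\#,*}f(P)+M^{*}f(P)\bigr],
\end{equation}
where $\widetilde{\mathrm{K}}_{m}^{\#,*}f$ denotes a Hardy--Littlewood-type enlargement of $\widetilde{\mathrm{K}}_{m}^{\#}f$ (in practice, $\widetilde{\mathrm{K}}_{m}^{\#}f$ composed with a bounded translation operator along $\partial D$), which satisfies the same $L^{p}$ bound as $\widetilde{\mathrm{K}}_{m}^{\#}f$.

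Next I would take $L^{p}(\partial D)$-norms and invoke two known boundedness results: Theorem 8.22, which gives $\|\widetilde{\mathrm{K}}_{m}^{\#}f\|_{L^{p}(\partial D)}\leq C\|f\|_{L^{p}(\partial D)}$ for all $p\in(1,\infty)$ (and in fact $p\geq 1$ when $m\geq 2$), and the classical $L^{p}$-boundedness of the Hardy--Littlewood maximal operator $M^{*}$ on the space of homogeneous type $(\partial D,d\sigma)$. Together these yield
\begin{equation}
\|M(\nabla\mathcal{M}_{m}f)\|_{L^{p}(\partial D)}\leq C\bigl(\|\widetilde{\mathrm{K}}_{m}^{\#}f\|_{L^{p}(\partial D)}+\|M^{*}f\|_{L^{p}(\partial D)}\bigr)\leq C\|f\|_{L^{p}(\partial D)},
\end{equation}
which is the claim. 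For the degenerate case $m=1$, the gradient of the classical single layer potential on a Lipschitz domain is governed by the Coifman--McIntosh--Meyer theorem via Verchota's identity and is classical; it is exactly the input used for Theorem 8.22 when $m=1$, so no extra work is needed here.

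The only delicate point is the first step: the pointwise control of $|\nabla\mathcal{M}_{m}f(X)|$ by a tangential maximal function at a boundary point. For $m\geq 2$ the kernel $\nabla\mathcal{D}_{m}=D_{m}$ is weakly singular and Theorem 8.21 already handles it, so the comparison $|X-Q|\approx |P'-Q|$ when $|Q-P'|>\rho$ is the usual elementary geometry for Lipschitz graphs and the argument is essentially routine. For $m=1$ the kernel is the Calderón--Zygmund type singular integral associated to the gradient of the fundamental solution and the pointwise domination of the non-tangential maximal function by the truncated singular integral plus $M^{*}f$ is the classical Cotlar-type inequality, which is the main (and only) technical obstacle; I would cite it from Verchota \cite{v0} or Dahlberg--Kenig \cite{dk,dk1} rather than reprove it.
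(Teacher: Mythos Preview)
Your proposal is correct and follows essentially the same route as the paper: the paper's proof simply says ``Similar to Theorem 8.20,'' whose proof in turn combines the pointwise Cotlar-type inequality (here Theorem 8.21), the $L^{p}$-bound for the truncated maximal operator (here Theorem 8.22), and the $L^{p}$-boundedness of $M^{*}$, exactly as you do. Your extra care in passing from the projected point $P'$ back to the cone vertex $P$ is a detail the paper suppresses but that is indeed needed, and your treatment of it is fine.
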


\begin{proof}
Similar to Theorem 8.20.
\end{proof}

Now we can give the main results in this section as follows

\begin{thm}
Let $\{\,K_{m}\,\}_{m=1}^{\infty}$ be the sequence of the Poisson
fields, and $D$ be a bounded Lipschitz domain in
$\mathbb{R}^{n+1}$ with boundary $\partial D$, then
for any $m>1$, there exists $\varepsilon=\varepsilon(D)>0$ such that
the PHD problem (4.1) with the data $f_{j}\in L^{p}(\partial D)$,
$2-\varepsilon<p<\infty$, is solvable and a
solution is given by
\begin{align}
u(X)&=\sum_{j=1}^{m}\int_{\partial D}\langle K_{j}(X, Q),
n_{Q}\rangle \widetilde{f}_{j-1}(Q)d\sigma(Q),\\
&=\sum_{j=1}^{m}M_{j}\widetilde{f}_{j-1}(X),\,\,\,\, X\in D,\nonumber
\end{align}
where
\begin{equation}\widetilde{f}_{m-1}=\left(\frac{1}{2}I+T\right)^{-1}f_{m-1}\end{equation} and
\begin{equation}\widetilde{f}_{l}=\left(\frac{1}{2}I+T\right)^{-1}\left(f_{l}-\sum_{j=l+2}^{m}\mathrm{K}_{j-l}\widetilde{f}_{j-1}\right)\end{equation}
with $0\leq l\leq m-2$, which satisfying the following estimates
\begin{equation}
\|u-M_{1}\widetilde{f}_{0}\|_{L^{p}(D)}\leq C\sum_{j=1}^{m-1}\|f_{j}\|_{L^{p}(\partial D)}
\end{equation}
and
\begin{equation}
\|M(u)\|_{L^{p}(\partial D)}\leq C\sum_{j=0}^{m-1}\|f_{j}\|_{L^{p}(\partial D)}
\end{equation}
in which $M(u)$ is the non-tangential maximal function of $u$ on $\partial D$.
Under any of the above two estimates, the solution (8.36) with (8.37) and (8.38) is unique.
\end{thm}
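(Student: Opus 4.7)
The plan is to adapt the strategy from Theorem 4.5 to the bounded setting, replacing the weighted $L^p$ framework and Lipschitz-graphic estimates by the unweighted $L^p$ boundedness results just established for bounded Lipschitz domains (Theorems 8.10--8.20). First I would postulate a potential ansatz
\begin{equation*}
u(X) = \sum_{j=1}^{m} M_j \widetilde{f}_{j-1}(X), \qquad X \in D,
\end{equation*}
with densities $\widetilde{f}_0,\dots,\widetilde{f}_{m-1}$ to be determined. Applying the Laplace iterates $\Delta^l$, $0 \le l \le m$, and invoking Theorem 4.3 (valid in bounded Lipschitz domains as noted in Remark 4.4), I would obtain the telescoping identities $\Delta^l u = \sum_{j=1}^{m-l} M_j \widetilde{f}_{j+l-1}$ in $D$ and $\Delta^m u \equiv 0$, so polyharmonicity comes for free.

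Next I would impose the boundary data by taking the nontangential limit $X \to P \in \partial D$ inside each $M_j$. By Theorem 3.5(4)--(5) (and its bounded domain analogue, essentially identical in proof), this converts the $m$ boundary equations of (4.1) into the triangular system
\begin{align*}
f_{m-1} &= \tfrac{1}{2}\widetilde{f}_{m-1} + T\widetilde{f}_{m-1}, \\
f_l &= \left(\tfrac{1}{2}I + T\right)\widetilde{f}_l + \sum_{j=l+2}^{m} \mathrm{K}_{j-l}\widetilde{f}_{j-1}, \qquad 0 \le l \le m-2.
\end{align*}
Lemma 3.4 gives the invertibility of $\tfrac{1}{2}I + T$ on $L^p(\partial D)$ for $2-\varepsilon < p < \infty$, while Theorem 8.11 ensures $\mathrm{K}_{j-l}\widetilde{f}_{j-1} \in L^p(\partial D)$ whenever $\widetilde{f}_{j-1} \in L^p(\partial D)$. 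Reading the system from the bottom up then yields (8.37)--(8.38) and, by induction on $m-1-l$, shows $\widetilde{f}_l \in L^p(\partial D)$ with $\|\widetilde{f}_l\|_{L^p(\partial D)} \le C \sum_{k=l}^{m-1}\|f_k\|_{L^p(\partial D)}$.

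For the estimate (8.39), Theorem 8.12 gives $\|M_j \widetilde{f}_{j-1}\|_{L^p(D)} \le C\|\widetilde{f}_{j-1}\|_{L^p(\partial D)}$ for $j \ge 2$, and summing over $j$ together with the preceding density bound yields the claim. For the nontangential maximal estimate (8.40), the classical Dahlberg bound handles $M(M_1\widetilde{f}_0)$, while Theorem 8.20 bounds $M(M_j\widetilde{f}_{j-1})$ for $j \ge 2$; summation and the density estimates give (8.40). Uniqueness under either estimate follows from a standard subtraction argument: a difference $v$ of two solutions is polyharmonic in $D$ with $\Delta^j v|_{\partial D} = 0$ for $0 \le j \le m-1$; cascading the $L^p$ Dirichlet uniqueness of Dahlberg--Kenig--Verchota (applied successively to $\Delta^{m-1} v, \Delta^{m-2}v, \dots, v$) forces $v \equiv 0$.

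The main obstacle I anticipate is the bookkeeping in the recursive argument, particularly ensuring that at each step the right-hand side $f_l - \sum_{j \ge l+2} \mathrm{K}_{j-l}\widetilde{f}_{j-1}$ truly lies in the range of $\tfrac{1}{2}I + T$ so that Lemma 3.4 applies; this is automatic once all the $\widetilde{f}_{j-1}$ with $j-1 > l$ have already been shown to sit in the correct $L^p(\partial D)$. A secondary technical point is justifying the interchange of $\Delta^l$ with the boundary integral defining $M_j$, which is covered by Theorem 4.3 together with Remark 4.4 extending it to the bounded Lipschitz setting. Beyond this, the argument is a clean reduction to the second-order theory of Dahlberg, Kenig and Verchota, with the higher-order machinery entering only through Theorems 8.11, 8.12, and 8.20.
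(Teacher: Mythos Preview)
Your proposal is correct and follows essentially the same route as the paper, which simply states that the argument is ``similar to Theorem 4.5 by using Lemma 3.4, Theorems 8.12, 8.13 and 8.20.'' Note that your references are off by one: what you call Theorem~8.11 (the $L^p$ boundedness of $\mathrm{K}_m$) is Theorem~8.12 in the paper's numbering (8.11 is a Remark), and your Theorem~8.12 (boundedness of $M_j$ into $L^p(D)$) is the paper's Theorem~8.13.
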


\begin{proof}
It is similar to Theorem 4.5 by using Lemma 3.4, Theorems 8.12, 8.13 and 8.20.
\end{proof}

\begin{thm}
Let $\{\,\mathcal{K}_{m}\,\}_{m=1}^{\infty}$ be the sequence of the
polyharmonic fundamental solutions, and $D$ be a bounded Lipschitz
domain in $\mathbb{R}^{n+1}$ with boundary $\partial
D$, then for any $m>1$, there exists $\varepsilon=\varepsilon(D)>0$
such that the PHN problem (6.1) with the data $g_{m-1}\in
L_{0}^{p}(\partial D)$, $g_{j}\in
L^{p}(\partial D)$, $0\leq j\leq m-2$, $1<p<2+\varepsilon$, is
solvable and a solution is given by
\begin{align}
u(X)&=\sum_{j=1}^{m}\int_{\partial D}\mathcal{K}_{j}(X,
Q)\widetilde{g}_{j-1}(Q)d\sigma(Q),\\
&=\sum_{j=1}^{m}\mathcal{M}_{j}\widetilde{g}_{j-1}(X),\,\, X\in D,\nonumber
\end{align}
where
\begin{equation}\widetilde{g}_{m-1}=\left(-\frac{1}{2}I+T^{*}\right)^{-1}g_{m-1}\end{equation}
and
\begin{equation}\widetilde{g}_{l}=\left(-\frac{1}{2}I+T^{*}\right)^{-1}\left(g_{l}+\sum_{j=l+2}^{m}\mathrm{K}^{*}_{j-l}\widetilde{g}_{j-1}\right)\end{equation}
with $0\leq l\leq m-2$, which satisfying the following estimates
\begin{equation}
\|\nabla(u-\mathcal{M}_{1}\widetilde{g}_{0})\|_{L^{p}(D)}\leq C\sum_{j=1}^{m-1}\|g_{j}\|_{L^{p}(\partial D)},
\end{equation}
\begin{equation}
\|u\|_{L^{p}(D)}\leq C\sum_{j=0}^{m-1}\|g_{j}\|_{L^{p}(\partial D)}
\end{equation}
and
\begin{equation}
\|M(\nabla u)\|_{L^{p}(\partial D)}\leq C\sum_{j=0}^{m-1}\|g_{j}\|_{L^{p}(\partial D)}
\end{equation}
in which $M(\nabla u)$ is the non-tangential maximal function of $\nabla u$ on $\partial D$.
The solution (8.41) with (8.42) and (8.43) is unique under (8.45), and unique up to a constant under (8.44) and (8.46).
\end{thm}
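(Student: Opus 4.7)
My plan is to mimic the proof of Theorem 6.3, substituting at each step the bounded-Lipschitz counterpart of the graphic-domain ingredient. I would begin with the ansatz $u(X)=\sum_{j=1}^{m}\mathcal{M}_{j}\widetilde{g}_{j-1}(X)$ for unknown densities $\widetilde{g}_{j-1}$. Iterated application of $\Delta$ via Theorem 6.7 (valid for bounded Lipschitz $D$ by Remark 6.8) gives $\Delta^{l}u=\sum_{j=1}^{m-l}\mathcal{M}_{j}\widetilde{g}_{j+l-1}$ for $0\leq l\leq m-1$, and $\Delta^{m}u=0$ automatically. Taking the outward normal derivative and letting $X\to P\in\partial D$ non-tangentially, the jump formulas of Theorem 6.3(4)--(5) (whose proofs carry over to the bounded setting by localizing via the coordinate cylinders of Lemma 8.2) produce the triangular system
\[
g_{l}=\left(-\tfrac{1}{2}I+T^{*}\right)\widetilde{g}_{l}-\sum_{j=l+2}^{m}\mathrm{K}^{*}_{j-l}\widetilde{g}_{j-1},\qquad 0\leq l\leq m-1.
\]

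I would solve this system from the top down. At level $l=m-1$, since $g_{m-1}\in L^{p}_{0}(\partial D)$ and $-\tfrac{1}{2}I+T^{*}$ is invertible from $L^{p}_{0}(\partial D)$ onto itself for $1<p<2+\varepsilon$ by Lemma 8.1, I set $\widetilde{g}_{m-1}=(-\tfrac{1}{2}I+T^{*})^{-1}g_{m-1}\in L^{p}_{0}(\partial D)$. For lower $l$, the $L^{p}$-boundedness of $\mathrm{K}^{*}_{j-l}$ from Theorem 8.10 places the right-hand side in $L^{p}(\partial D)$, and since the kernel of $-\tfrac{1}{2}I+T^{*}$ on $L^{p}$ consists of constants (interior problem), the equation is solvable up to an additive constant once the right-hand side is mean-zero. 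The mean-zero propagation is provided by the second half of Theorem 8.10, namely the identity $\int_{\partial D}\mathrm{K}^{*}_{m}f\,d\sigma=0$ whenever $\int_{\partial D}\mathcal{N}_{m-1}f\,d\sigma=0$; this, combined with adjusting $\widetilde{g}_{l}$ by a constant at each step, makes the cascade consistent. The residual constant freedom in each $\widetilde{g}_{l}$ for $l<m-1$ translates into an overall constant in $u$, which matches the ``unique up to a constant'' clause of the theorem.

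For the three estimates, (8.44) is obtained by applying Theorem 8.16 termwise to $\nabla\mathcal{M}_{j}\widetilde{g}_{j-1}$ for $j\geq 2$ and combining with the $L^{p}$-bounds on the triangular inversion from Lemma 8.1 and Theorem 8.10; (8.45) follows from Theorem 8.15 applied to each $\mathcal{M}_{j}$, $j\geq 2$, together with the classical $L^{p}(\partial D)\to L^{p}(D)$ bound for $\mathcal{M}_{1}$; and (8.46) comes from Theorem 8.24 applied layer-by-layer. Uniqueness then follows by applying the estimates to the difference of two solutions: under (8.45) one deduces $\Delta^{m-1}v=0$ from the classical harmonic Neumann uniqueness (constant), and the $L^{p}(D)$ control on $v$ forces this constant to be zero; inducting downward yields $v\equiv 0$. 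Under the gradient bounds (8.44) or (8.46), the same induction only determines $v$ up to a constant.

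The principal obstacle is the second paragraph: the triangular inversion is available in the graphic-domain proof of Theorem 6.3 because Lemma 3.4 gives $-\tfrac{1}{2}I+T^{*}$ invertible on \emph{all} of $L^{p}(\partial D)$, whereas Lemma 8.1 supplies invertibility only on the mean-zero subspace $L^{p}_{0}(\partial D)$. Making the downward cascade legitimate therefore requires carefully exploiting the mean-zero identity of Theorem 8.10 and the constants-in-the-kernel freedom, aligning them so that the compatibility condition $g_{m-1}\in L^{p}_{0}(\partial D)$ remains the \emph{only} global constraint on the data.
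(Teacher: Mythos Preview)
Your overall strategy---the ansatz $u=\sum\mathcal M_j\widetilde g_{j-1}$, iterated Laplacians via Theorem 6.5, the jump relations of Theorem 6.3(4)--(5), and the bounded-domain estimates of Theorems 8.10, 8.15, 8.16, 8.23---is exactly what the paper does; its proof is the single line ``similar to Theorem 6.7 by noting Remark 8.9 and using Lemmas 3.4 and 8.1, Theorems 8.10, 8.15, 8.16 and 8.23.'' You have also correctly isolated the point the paper does not spell out, namely that on a bounded domain Lemma~8.1 gives invertibility of $-\tfrac12 I+T^{*}$ only on $L^{p}_{0}(\partial D)$, not on all of $L^{p}(\partial D)$.

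However, your proposed resolution of that obstacle contains two concrete errors. First, the kernel of $-\tfrac12 I+T^{*}$ does \emph{not} consist of constants: it is $-\tfrac12 I+T$ whose kernel is $\mathrm{span}\{1\}$ (since $T1=\tfrac12$ on a bounded domain), and by Fredholm duality the kernel of the adjoint is spanned by an equilibrium-type density, not by $1$. Adjusting $\widetilde g_{l}$ by a constant therefore does not move you inside $\ker(-\tfrac12 I+T^{*})$. Second, the mean-zero criterion in Theorem~8.10 is $\int_{\partial D}\mathcal N_{m-1}f\,d\sigma=0$, which is \emph{not} equivalent to $f\in L^{p}_{0}$; so $\widetilde g_{m-1}\in L^{p}_{0}$ does not by itself force $\mathrm K^{*}_{j-l}\widetilde g_{j-1}\in L^{p}_{0}$. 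More fundamentally, the theorem hypothesizes only $g_{l}\in L^{p}(\partial D)$ for $l\le m-2$ with no mean-zero constraint, so the right-hand side $g_{l}+\sum\mathrm K^{*}_{j-l}\widetilde g_{j-1}$ is generically \emph{not} in $L^{p}_{0}$, and then the equation $(-\tfrac12 I+T^{*})\widetilde g_{l}=\text{RHS}$ is simply unsolvable---the obstruction lies in the range, not the kernel, and no choice of $\widetilde g_{l}$ can repair it. The paper does not explicitly resolve this either (Remark~8.26 afterwards indicates that additional orthogonality conditions $\int\mathcal N_{l}\widetilde g_{j}\,d\sigma=0$ are what make the cascade land in $L^{p}_{0}$), so the gap you flagged is genuine; but your patch, as written, does not close it.
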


\begin{proof}
It is similar to Theorem 6.7 by noting Remark 8.9 and using Lemmas 3.4 and 8.1, Theorems 8.10, 8.15, 8.16 and 8.23.
\end{proof}

\begin{rem}
By the second claim in Theorem 8.10, if
\begin{equation}
\int_{\partial D}\mathcal{N}_{l}\widetilde{g}_{j}d\sigma=0, \,\, 1\leq j\leq m-1\,\, and \,\, 1\leq l\leq j,
\end{equation}
where $\mathcal{N}_{l}$ is the $l$th order Newtonian potential defined in (8.23), then
\begin{equation}
\int_{\partial D}\mathrm{K}^{*}_{l+1}\widetilde{g}_{j}d\sigma=0, \,\, 1\leq j\leq m-1\,\, and \,\, 1\leq l\leq j.
\end{equation}
Therefore, by Lemma 8.1, (8.42) and (8.43), we obtain that $\widetilde{g}_{j}\in L^{p}_{0}(\partial D)$, and further that $g_{j}\in L^{p}_{0}(\partial D)$, $0\leq j\leq m-2$.
\end{rem}

\begin{thm}
Let $\{\,\mathcal{K}_{m}\,\}_{m=1}^{\infty}$ be the sequence of the
polyharmonic fundamental solutions, and $D$ be a bounded Lipschitz
domain in $\mathbb{R}^{n+1}$ with boundary $\partial
D$, then for any $m>1$, there exists $\varepsilon=\varepsilon(D)>0$
such that the PHR problem (7.1) with the data $h_{j}\in
L_{1}^{p}(\partial D)$, $0\leq j<m$, $1<p<2+\varepsilon$, is
solvable and a solution is given by
\begin{align}
u(X)&=\sum_{j=1}^{m}\int_{\partial D}\mathcal{K}_{j}(X,
Q)\widetilde{h}_{j-1}(Q)d\sigma(Q),\\
&=\sum_{j=1}^{m}\mathcal{M}_{j}\widetilde{h}_{j-1}(X),\,\, X\in D,\nonumber
\end{align}
where
\begin{equation}\widetilde{h}_{m-1}=\mathcal{M}_{1}^{-1}h_{m-1}\end{equation}
and
\begin{equation}\widetilde{h}_{l}=\mathcal{M}_{1}^{-1}\left(h_{l}-\sum_{j=l+2}^{m}\mathcal{M}_{j-l}\widetilde{h}_{j-1}\right)\end{equation}
with $0\leq l\leq m-2$, which satisfying the following estimates
\begin{equation}
\|\nabla(u-\mathcal{M}_{1}\widetilde{h}_{0})\|_{L^{p}(D)}\leq C\sum_{j=1}^{m-1}\|h_{j}\|_{L_{1}^{p}(\partial D)},
\end{equation}
\begin{equation}
\|u\|_{L^{p}(D)}\leq C\sum_{j=0}^{m-1}\|h_{j}\|_{L_{1}^{p}(\partial D)}
\end{equation}
and
\begin{equation}
\|M(\nabla u)\|_{L^{p}(\partial D)}\leq C\sum_{j=0}^{m-1}\|h_{j}\|_{L_{1}^{p}(\partial D)}
\end{equation}
in which $M(\nabla u)$ is the non-tangential maximal function of $\nabla u$ on $\partial D$.
The solution (8.49) with (8.50) and (8.51) is unique under (8.53), and unique up to a constant under (8.52) and (8.54).
\end{thm}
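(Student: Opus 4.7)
The plan is to solve (7.1) by an ansatz in terms of iterated single-layer $\mathcal{S}$-potentials, modeled on the proof of Theorem 7.2, then derive the three estimates (8.52)--(8.54) from the boundedness results compiled earlier in this section. First I would write
\begin{equation*}
u(X)=\sum_{j=1}^{m}\mathcal{M}_{j}\widetilde{h}_{j-1}(X),\qquad X\in D,
\end{equation*}
for unknowns $\widetilde{h}_{0},\ldots,\widetilde{h}_{m-1}$ to be chosen, and apply $\Delta^{l}$ for $0\le l\le m$ using Theorem~6.6 (which gives $\Delta\mathcal{M}_{j}=\mathcal{M}_{j-1}$ on $D$) to obtain
\begin{equation*}
\Delta^{l}u(X)=\sum_{j=l+1}^{m}\mathcal{M}_{j-l}\widetilde{h}_{j-1}(X),\qquad 0\le l\le m-1,\qquad \Delta^{m}u=0.
\end{equation*}
Taking the non-tangential (in fact, here just continuous) limit to $\partial D$ and matching against $\Delta^{l}u=h_{l}$ yields a triangular system
\begin{equation*}
h_{l}=\mathcal{M}_{1}\widetilde{h}_{l}+\sum_{j=l+2}^{m}\mathcal{M}_{j-l}\widetilde{h}_{j-1},\qquad 0\le l\le m-1,
\end{equation*}
which by Lemma~7.1 (invertibility of $\mathcal{M}_{1}:L^{p}(\partial D)\to L_{1}^{p}(\partial D)$ for $1<p<2+\varepsilon$) is solvable by back-substitution, giving the explicit formulas (8.50)--(8.51). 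The verification that the candidate $u$ is $m$-harmonic in $D$ and attains the prescribed boundary data is then immediate from Theorem~6.4 and Theorem~6.6, provided the various operators $\mathcal{M}_{j-l}$ send $L_{1}^{p}$ boundary data into the correct target spaces; this is where I will need to invoke Theorem~7.3 (which says $\mathcal{M}_{j}$ maps $L^{p}(\partial D)$ to $L_{1}^{p}(\partial D)$ boundedly) to keep every $\widetilde{h}_{l}$ in $L^{p}(\partial D)$.

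Next I would establish the three $L^{p}$ estimates. For (8.52), note that
\begin{equation*}
\nabla\bigl(u-\mathcal{M}_{1}\widetilde{h}_{0}\bigr)=\sum_{j=2}^{m}\nabla\mathcal{M}_{j}\widetilde{h}_{j-1},
\end{equation*}
so Theorem~8.16 (boundedness of $\nabla\mathcal{M}_{j}:L^{p}(\partial D)\to L^{p}(D)$) combined with the $L^{p}\to L^{p}$ bounds on each $\widetilde{h}_{j-1}$ coming from Lemma~7.1 and Theorem~7.3 gives
\begin{equation*}
\|\nabla(u-\mathcal{M}_{1}\widetilde{h}_{0})\|_{L^{p}(D)}\le C\sum_{j=1}^{m-1}\|h_{j}\|_{L_{1}^{p}(\partial D)}.
\end{equation*}
The estimate (8.53) is obtained analogously using Theorem~8.15 ($\mathcal{M}_{j}:L^{p}(\partial D)\to L^{p}(D)$ bounded) applied to the full sum for $u$. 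The non-tangential maximal estimate (8.54) is the subtler one: I would split $M(\nabla u)\le\sum_{j=1}^{m}M(\nabla\mathcal{M}_{j}\widetilde{h}_{j-1})$ and invoke Theorem~8.24, whose classical $m=1$ case (Dahlberg--Kenig--Verchota) is imported, while the $m\ge 2$ cases are essentially trivial because the kernels $\nabla\mathcal{D}_{m}=D_{m}$ are continuous up to the boundary and yield bounded integral operators by the absolute-integrability lemmas (Lemma~8.2 and Remark~8.4).

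For uniqueness, I would reduce to showing that a polyharmonic $u$ in $D$ with $\Delta^{j}u=0$ on $\partial D$ for $0\le j\le m-1$ and one of the global bounds (8.53) or (8.54) must vanish (resp. be constant under (8.52)). The chain $v_{m-1}:=\Delta^{m-1}u$ is harmonic in $D$ with zero boundary data and $v_{m-1}\in L^{p}(D)$ or $M(\nabla v_{m-1})\in L^{p}(\partial D)$, so Dahlberg's $L^{p}$ uniqueness for the Laplace Dirichlet/regularity problem (or Lemma~7.1 applied to the boundary-integral representation) forces $v_{m-1}\equiv 0$; iterating downwards in $j$ yields $u\equiv 0$ (resp. constant). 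The main obstacle I anticipate is keeping careful track of the mapping properties at each step of the iterative reduction, because the $L_{1}^{p}$-smoothness of $h_{j}$ must be transferred through the operators $\mathcal{M}_{j-l}$ without loss; here the key fact is that $\mathcal{M}_{j-l}$ with $j-l\ge 2$ is strictly smoothing (it lands in $C^{\infty}(D)$ with $L_{1}^{p}$ boundary trace by Theorem~7.3), so the only delicate inversion is the single application of $\mathcal{M}_{1}^{-1}$ at each stage, exactly as in Theorem~7.2.
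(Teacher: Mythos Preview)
Your proposal is correct and follows essentially the same approach as the paper, whose proof simply reads ``similar to Theorem~7.2 by noting Remark~8.9 and invoking Lemma~7.1, Theorems~8.14--8.16, and~8.23.'' A few of your citations are slightly off---you want Theorem~6.5 for $\Delta\mathcal{M}_{j}=\mathcal{M}_{j-1}$, Theorem~8.23 for the non-tangential maximal bound, and in the bounded setting the $L^{p}\to L_{1}^{p}$ mapping property of $\mathcal{M}_{j}$ should be drawn from the Section~8 results (Theorems~8.14 and~8.23, the latter controlling $\nabla\mathcal{M}_{j}f$ on $\partial D$) rather than Theorem~7.3, which is stated for graphic domains with weights---but the substance of your argument matches the paper's exactly.
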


\begin{proof}
It is similar to Theorem 7.2 by noting Remark 8.9 and invoking Lemma 7.1, Theorems 8.14-8.16, and 8.23.
\end{proof}


\end{document}